\documentclass[11pt]{article}
\usepackage[utf8]{inputenc}
\usepackage{amsfonts, amsmath}
\usepackage{amssymb}
\usepackage{amsthm,amsmath,amscd}
\usepackage{enumerate}
\usepackage{url}
\usepackage[pdftex]{graphicx}
\usepackage[margin=25pt,font=small,labelfont=bf]{caption}
\usepackage{subfig}
\usepackage[numbers,square,sort&compress]{natbib}
\usepackage[colorlinks=true]{hyperref}
\newcommand{\defn}[1]{\textcolor{blue}{\emph{#1}}}
\newcommand*{\doi}[1]{doi: \href{https://dx.doi.org/#1}{\urlstyle{rm}\nolinkurl{#1}}}
\newcommand*{\arxiv}[1]{arXiv:  \href{https://arxiv.org/abs/#1}{\urlstyle{rm}\nolinkurl{#1}}}
\let\oldproofname=\proofname
\renewcommand{\proofname}{\rm\bf{\oldproofname}}

\newcommand{\RR}{\mathbb R}
\newcommand{\CC}{\mathbb C}

\newcommand{\QQ}{\mathbb Q}

\newcommand{\kk}{\mathbf{k}}
\newcommand{\R}{\mathbb R}

\newcommand{\ii}{\mathbf i}
\newcommand{\bna}{\begin{eqnarray}}
\newcommand{\ena}{\end{eqnarray}}
\newcommand{\ba}{\begin{eqnarray*}}
\newcommand{\ea}{\end{eqnarray*}}
\newcommand{\bs}[1]{}
\newcommand{\iprod}[2]{\left\langle {#1}, {#2}\right\rangle}

\DeclareMathOperator{\sing}{Sing}

\newtheorem{theorem}{Theorem}[section]
\newtheorem{corollary}[theorem]{Corollary}
\newtheorem{lemma}[theorem]{Lemma}
\newtheorem{proposition}[theorem]{Proposition}
\newtheorem{remark}[theorem]{Remark}
\newtheorem{definition}[theorem]{Definition}

\newtheorem{question}[theorem]{Question}

\newcommand{\AAA}{{\mathcal A}}

\newcommand{\IF}{{\rm IF}}
\newcommand{\IR}{{\rm IR}}
\newcommand{\ST}{{\rm Str}}
\newcommand{\GOR}{{\rm GOR}}
\newcommand{\LL}{{\rm LSS}}
\def\p{{\bf p}}
\def\q{{\bf q}}
\def\0{{\bf 0}}

\def\pn{\p =(\p_1, \dots, \p_{n}) }

\def\M{{\bf M}}
\def\H{{\bf H}}
\def\J{{\bf J}}
\def\I{{\bf I}}
\let\oldv=\v
\def\v{{\bf v}}

\def\u{{\bf u}}
\def\x{{\bf x}}

\def\y{{\bf y}}

\usepackage[margin=1in]{geometry}
\textheight     9in

\textwidth       6.5in

\begin{document}
\title{Generically globally rigid graphs have generic universally
rigid frameworks\footnote{An earlier version of this paper
\cite{CGT16v1} had the title ``Generic global and universal
rigidity''.}}

\author{Robert Connelly
\thanks{Partially supported by NSF grant DMS-1564493}
\and 
Steven J. Gortler
\thanks{Partially supported by NSF grant DMS-1564473}
\and 
Louis Theran}
\date{}
\maketitle 

%%%%%%%%%%%%%%%%
\begin{abstract}  
We show that any graph that is generically globally rigid  in $\R^d$ has a
realization in $\RR^d$ that is both generic and universally rigid. 
This also implies that the graph
also must have a realization in $\RR^d$ that is both infinitesimally 
rigid and universally rigid;
such a realization serves as a certificate 
of generic global rigidity.  

Our approach
involves an algorithm by Lov\'asz, Saks and Schrijver
that, for a sufficiently connected graph, 
constructs a general position orthogonal
representation of the vertices, and a result of Alfakih that
shows how this representation leads to a stress matrix and a
universally rigid framework of the graph.

\end{abstract}
%%%%%%%%%%%%%%%%
\section{Introduction} \label{section:introduction}
%%%%%%%%%%%%%%%

In this paper we clarify one central aspect in the 
relationship between global and universal rigidity of frameworks of a graph.

Given a graph $G$ (with $n$ vertices and $m$ edges)
and a configuration $\pn$ of its vertices in 
$\RR^d$, we refer to the pair $(G,\p)$ as a \defn{framework}, and 
measure the Euclidean lengths along the edges of $G$ between pairs of 
vertices in $\RR^d$.
We call two frameworks, $(G,\p)$ and $(G,\q)$
\defn{congruent} if there is an isometry of all of
$\R^d$ that takes $\q$ to $\p$. This is equivalent to the property that
the Euclidean lengths are preserved between \emph{all pairs} of points
in a configuration.

We say that $(G,\p)$ is 
\defn{globally rigid} in $\RR^d$
if every framework, $(G,\q)$ in $\RR^d$, with the same edge lengths as $(G,\p)$, is congruent to $(G,\p)$.

We say that $(G,\p)$ is 
is \defn{universally rigid} if every framework, $(G,\q)$ with the same edge lengths as $(G,\p)$ in \emph{any}
dimension $\R^D$ is congruent to $(G,\p)$.

We say that a graph $G$ is \defn{generically globally rigid} (GGR) in
$\RR^d$ if every ``generic framework'' of $G$ in $\RR^d$ (you can
think of this as ``almost every'' framework in $\RR^d$) is globally
rigid.  It turns out that if a graph is not generically globally
rigid, then every ``generic framework'' of $G$ in $\RR^d$ is not
globally rigid \cite{Gortler-Thurston}.

The universal rigidity of frameworks in $\RR^d$ of $G$ 
does not  have such a simple behavior. 
There are  graphs with
Euclidean open (positive measure) sets of frameworks that are universally rigid, 
and other open sets of
frameworks that are not universally rigid
 (see, e.g., \cite[Remark 1.7]{Gortler-thurston2}).  
For example, for the line $\R^1$, and when the graph $G$ is a cycle, 
the only universally rigid configurations are when one edge length is the sum 
of the others, although all the generic configurations are globally 
rigid in the line (see \cite{JN15} for more about
universal rigidity in $\R^1$).

If one framework $(G,\p)$ is universally rigid, then clearly 
this framework is
globally rigid in $\RR^d$, but this does not imply that the graph $G$ itself,
is generically globally rigid in $\RR^d$. Indeed $\p$ might be somehow
exceptional, and not representative of the generic behavior of
frameworks of $G$ in $\RR^d$.  Figure \ref{fig:Exceptional} shows two examples of 
frameworks on $K_{3,3}$ 
that are universally rigid in the plane 
because the vertices are not separable by a quadric
\cite[Theorem 4.4]{CG17-bipartite} (see also
\cite{Connelly-energy}).  Generically, 
$K_{3,3}$ is minimally rigid, and thus not globally 
rigid \cite{Hendrickson}.  

 \begin{figure}[h]
     \begin{center}
          \includegraphics[width=0.4\textwidth]{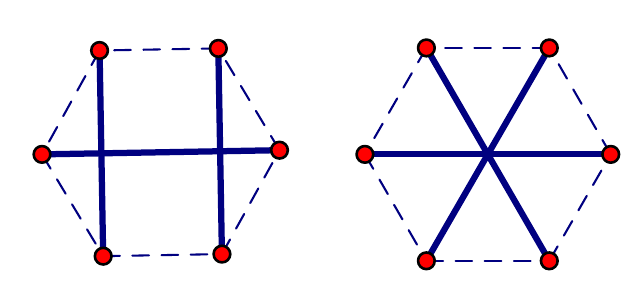}%
         \end{center}
     \caption{Universally rigid frameworks of a graph that is
not generically globally rigid.  The edges are drawn to indicate 
the signs of the entries of a PSD equilibrium stress matrix 
(see Section \ref{sec:background}) for each framework:
dashed lines correspond to negative entries and thick 
ones to positive entries.}
     \label{fig:Exceptional}
 \end{figure}
    
On the other hand, if we can find a Euclidean open set of
configurations of $G$ in $\RR^d$ that are universally rigid (and thus
globally rigid), then $G$ is generically
globally rigid in $\RR^d$~\cite{Gortler-Thurston}.  (We may replace
``open set of configurations'' by either ``a single generic
framework'' or ``a single infinitesimally rigid framework'' without
changing the conclusion.)

In this paper we show the converse. Namely, if  $G$ is
generically globally rigid in $\RR^d$, then it has a Euclidean open set
of frameworks in $\RR^d$ that are universally rigid.
This answers a question posed by Gortler and 
Thurston~\cite{Gortler-thurston2} for $d\ge 3$. Our method applies for $d\ge 1$, 
but the cases $d=1$ and $d=2$ have already been settled~\cite{Jordan-Szabadka,Jacobs}.  Both \cite{Jordan-Szabadka} and \cite{Jacobs}
rely, in a fundamental way, on the combinatorial
classification of GGR graphs for $d=2$
\cite{Berg-Jordan,Jackson-Jordan} which does not apply to higher
dimensions.

Our approach is to analyze a construction due to Alfakih~\cite{Alfakih-conn},
which builds on work of 
Lov\'asz, Saks and Schriver~\cite{Lovasz-Schrijver}.
The main result of \cite{Lovasz-Schrijver} is that any $(d+1)$-connected graph
admits an $(n-d-1)$-dimensional ``orthogonal representation'' in general position.
Alfakih \cite{Alfakih-conn} showed how to convert these representations into positive semidefinite (PSD)
stress matrices of rank $n-d-1$, which then
yield universally rigid frameworks
$(G,\p)$.

Since $(d+1)$-connectivity is strictly weaker 
than generic global rigidity \cite{Hendrickson}, there are graphs $G$ for which 
\emph{all} the universally rigid frameworks $(G,\p)$ constructed by Alfakih's
method are infinitesimally flexible and thus
lie in a proper algebraic subset of configurations.  
Our main result says that this does not happen when $G$ is GGR.

%%%%%%%%%%%%%%%%
\section{Background}\label{sec:background}
%%%%%%%%%%%%%%%%

Let $G$ be a graph with $n$ vertices and $m$ edges. Let $d$ be a fixed
dimension. Throughout, we will assume that $n\geq d+2$.

A \defn{(bar and joint) framework} in $\RR^d$, denoted as 
$(G,\p)$, is a graph $G$ together with a
configuration $\p=(\p_1, \dots,
\p_n)$ of points in $\R^d$.

%%%%%%%%%%%%%%%%
\subsection{Rigidity of Frameworks}
%%%%%%%%%%%%%%%%

We say that the framework $(G,\p)$
is \defn{locally rigid} in $\RR^d$ if,
except for congruences, 
there are no continuous motions 
in $\RR^d$ of the
configuration $\p(t)$, for $t \ge 0$, that preserve the
edge lengths:
\begin{equation}
|\p_i(t) - \p_j(t)|=|\p_i - \p_j|
\end{equation}
for all edges, $\{i, j\}$, of $G$, where $\p(0)=\p$.
If a framework is not locally rigid
in $\RR^d$, it is called \defn{locally flexible} in $\RR^d$ or equivalently just \defn{flexible} or a \defn{finite mechanism}.

The simplest way to confirm that a framework is locally rigid 
in $\RR^d$
is look at the 
linearization of the problem.

A \defn{first-order flex} or \defn{infinitesimal flex} of $(G,\p)$ 
in $\RR^d$
is a
corresponding assignment of vectors $\p'=(\p_1', \dots, \p_n')$,
$\p'_i \in \RR^d$ 
such
that for each $\{i,j\}$, an edge of $G$, the following
holds:
\begin{eqnarray}
(\p_i - \p_j)\cdot (\p'_i - \p'_j)&=&0 \label{eqn:first} 
\end{eqnarray}

A first-order flex in $\RR^d$
$\p'$ is \defn{trivial} if it is the restriction to the vertices, of the
time-zero derivative of a smooth
motion of isometries of $\R^d$. 
The property of being trivial is independent of the graph $G$.

The \defn{rigidity matrix} $R(\p)$ is the $nd$-by-$m$ matrix, where 
\[R(\p)\p'=(\dots, (\p_i - \p_j)\cdot (\p'_i - \p'_j), \dots)^T, \]
for $\p' \in \R^{nd}$,

A framework $(G,\p)$ in $\RR^d$ 
is called \defn{infinitesimally rigid} in $\R^d$
if it has no infinitesimal flexes in $\R^d$ except for trivial ones.
When $n \ge d$ this is the same as saying that the rank of $R(\p)$ is 
$nd-\binom{d+1}{2}$.
If a framework is not infinitesimally rigid in $\RR^d$, 
it is called \defn{infinitesimally flexible} in $\RR^d$.

A classical theorem states:
\begin{theorem} 
If a framework $(G,\p)$ is infinitesimally rigid in $\R^d$, then it is
locally rigid in $\R^d$.
\end{theorem}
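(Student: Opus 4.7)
The plan is to deduce local rigidity from infinitesimal rigidity by applying the constant-rank theorem to the squared edge-length map $f\colon \R^{nd}\to \R^m$ defined by $f(\q):=(|\q_i-\q_j|^2)_{\{i,j\}\in E(G)}$, whose Jacobian at $\q$ is (twice) the rigidity matrix $R(\q)$. First I would observe that whenever a configuration $\q$ affinely spans $\R^d$, the space of trivial infinitesimal flexes at $\q$ is a $\binom{d+1}{2}$-dimensional subspace of $\ker R(\q)$, since any Euclidean isometry preserves all pairwise distances and hence all edge lengths. When $n\ge d+2$, infinitesimal rigidity forces $\p$ itself to affinely span $\R^d$ (otherwise rotations about the affine hull of $\p$ would furnish a non-trivial infinitesimal flex), and the hypothesis is precisely that $\ker R(\p)$ equals this $\binom{d+1}{2}$-dimensional space, i.e., $\operatorname{rank} R(\p)=nd-\binom{d+1}{2}$.

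Next, by lower semicontinuity of matrix rank, $\operatorname{rank} R(\q)\ge nd-\binom{d+1}{2}$ on some open neighborhood $U$ of $\p$. Shrinking $U$ so that every $\q\in U$ continues to affinely span $\R^d$, the trivial-flex inclusion gives $\operatorname{nullity} R(\q)\ge \binom{d+1}{2}$, and hence $f$ has constant rank $nd-\binom{d+1}{2}$ on $U$. The constant-rank theorem then presents $V:=f^{-1}(f(\p))\cap U$ as a smooth submanifold of $U$ of dimension $\binom{d+1}{2}$. On the other hand, the orbit $O_\p$ of $\p$ under the Euclidean isometry group is contained in $V$ (isometries preserve edge lengths) and is itself a smooth submanifold of $\R^{nd}$ of dimension $\binom{d+1}{2}$. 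Matching dimensions, after possibly shrinking $U$ once more, $V=O_\p\cap U$.

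Finally, given any continuous edge-length-preserving motion $\p(t)$ with $\p(0)=\p$, continuity puts $\p(t)\in U$ for small $t\ge 0$, whence $\p(t)\in V=O_\p\cap U$, meaning $\p(t)$ is congruent to $\p$. This is exactly the definition of local rigidity in the excerpt. I do not expect a serious obstacle: the proof is a standard application of the constant-rank theorem, and the one place where the infinitesimal rigidity hypothesis is actually consumed is in certifying that $\operatorname{rank} R(\q)$ is constant on a neighborhood of $\p$. The small technical care needed is the preliminary observation that $\p$ affinely spans $\R^d$ and that this condition persists to nearby $\q$, so that the trivial-flex dimension count $\binom{d+1}{2}$ is legitimately available throughout $U$.
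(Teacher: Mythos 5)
The paper presents this as ``a classical theorem'' and supplies no proof of its own (it is the Asimow--Roth result), so there is no internal argument to compare against. Your constant-rank proof is correct and is essentially the standard textbook argument: infinitesimal rigidity pins $\operatorname{rank} R(\p)$ at $nd-\binom{d+1}{2}$; lower semicontinuity of rank together with the persistent $\binom{d+1}{2}$-dimensional space of trivial flexes (available once nearby configurations still affinely span $\R^d$) forces the rank to be locally constant; the constant-rank theorem then exhibits the local fiber of the squared-edge-length map as a $\binom{d+1}{2}$-dimensional submanifold, which by a dimension count against the isometry orbit of $\p$ must locally coincide with that orbit. The one hidden hypothesis---that $\p$ affinely spans $\R^d$---you correctly deduce from infinitesimal rigidity together with $n\ge d+2$, and $n\ge d+2$ is the paper's standing assumption from the start of Section 2, so the argument goes through as written.
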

The converse is not true (but see Theorem~\ref{thm:glr} below).

A framework $(G,\p)$ in $\RR^d$ is called \defn{globally rigid in $\R^d$} if,
there are no other
other (even distant)
frameworks $(G,\q)$ in $\R^d$ having the same  edge
lengths as $(G,\p)$, other than congruent frameworks.

A framework $(G,\p)$ in $\RR^d$ 
is called \defn{universally rigid} if,
there are no other
other (even distant)
frameworks $(G,\q)$ in $\R^D$, for any $D$,
having the same  edge
lengths as $(G,\p)$, other than congruent frameworks in $\RR^D$.

Clearly
universal rigidity implies global rigidity (in any dimension)
which implies
local rigidity (in any dimension).

Given a graph $G$,
 a \defn{stress vector} $\omega =( \dots,
 \omega_{ij}, \dots )$, is an assignment of a real scalar
 $\omega_{ij}=\omega_{ji}$ to each edge, $\{i,j\}$ in $G$.  (We have
 $\omega_{ij}=0$, when $\{i,j\}$ is not an edge of $G$.)

We say that 
 $\omega$ is an \defn{equilibrium stress vector}
for $(G,\p)$ if the vector equation
\begin{equation}\label{eqn:equilibrium}
 \sum_j \omega_{ij}(\p_i-\p_j) = 0
 \end{equation}
holds for all vertices $i$ of $G$.  
The equilibrium stress vectors of $(G,\p)$ form the co-kernel of its
rigidity matrix $R(\p)$.

We associate an $n$-by-$n$
\defn{stress matrix} $\Omega$ to a stress vector $\omega$, 
by setting the $i,j$th entry of $\Omega$ to
$-\omega_{ij}$, for $i \ne j$, and the diagonal entries of $\Omega$
are set such that the row and column sums of $\Omega$ are zero.  
The stress matrices of $G$  
are simply the symmetric matrices with zeros associated to
non-edge pairs that, additionally, have the all-ones vector 
in their kernel.
 
If $\omega$ is an equilibrium stress vector for
$(G,\p)$ then we say that the associated $\Omega$ is an 
\defn{ equilibrium stress matrix}
 for $(G,\p)$. For each of the  $d$ spatial dimensions,
if we define a vector $\v$ in $\RR^n$ by collecting  the  associated 
coordinate over all of the points in $\p$, we have $\Omega \v=0$.
Thus if the 
dimension of the affine span of the vertices $\p$ is $d$, then the
rank of $\Omega$ is at most $n-d-1$, but it could be less.

Let $(G,\p)$ be a framework (in any dimension) with a $d$-dimensional affine
span, denoted $\langle \p \rangle$.
Fixing an affine frame for $\langle \p \rangle$,
we can represent $\p$ using 
coordinates in $\RR^d$.
We say that the edges directions 
of $(G,\p)$ lie on a conic at infinity of $\langle \p \rangle$  
if there exists
a non-zero symmetric $d$-by-$d$ matrix $Q$ such that for all of the
edges,
$\{ij\}$ in $G$, 
we have $(\p_i-\p_j)^t Q (\p_i-\p_j)=0$.

Following \cite{Connelly-energy} we say a framework $(G,\p)$ 
(in any dimension)
with a $d$-dimensional affine span
is
\defn{super stable} if there is an equilibrium stress $\omega$ for
$(G,\p)$ such that its associated stress matrix $\Omega$ is PSD, the
rank of $\Omega$ is $n-d-1$,
and the edge directions do not
lie on a conic at infinity of $\langle \p \rangle$.

The following is a classic theorem by Connelly
\cite{Connelly-energy}
\begin{theorem}
Let $(G,\p)$ be a framework (in any dimension).
If $(G,\p)$ is super stable
then $(G,\p)$ is 
universally rigid.
\end{theorem}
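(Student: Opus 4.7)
The plan is to prove universal rigidity by the standard stress-energy argument: the PSD stress matrix $\Omega$ defines a nonnegative quadratic ``energy'' on configurations in any ambient dimension, $\p$ is a minimizer, and any competitor with the same edge lengths must also minimize, which will force it to be congruent to $\p$.

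First, for any configuration $\q = (\q_1, \dots, \q_n)$ in $\R^D$, define $E_\omega(\q) = \sum_{\{i,j\} \in E} \omega_{ij} |\q_i - \q_j|^2$. A routine expansion shows that if $Q$ is the $n \times D$ matrix whose rows are the $\q_i$, then $E_\omega(\q) = \operatorname{tr}(Q^T \Omega Q) = \sum_{k=1}^D (Q^{(k)})^T \Omega Q^{(k)}$, where $Q^{(k)}$ is the $k$-th column. Since $\Omega \succeq 0$, we have $E_\omega(\q) \ge 0$ for every $\q$. The equilibrium condition $\sum_j \omega_{ij}(\p_i - \p_j) = 0$ is exactly $\Omega P = 0$, so $E_\omega(\p) = 0$, and $\p$ achieves the minimum.

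Now suppose $(G, \q)$ in $\R^D$ has the same edge lengths as $(G, \p)$. Since the energy depends only on edge lengths, $E_\omega(\q) = E_\omega(\p) = 0$. Because $\Omega$ is PSD, this forces $\Omega Q = 0$, so every column of $Q$ lies in $\ker \Omega$. By the rank hypothesis $\operatorname{rank}(\Omega) = n - d - 1$, the kernel has dimension $d + 1$. It contains the all-ones vector (row and column sums of $\Omega$ are zero) and the $d$ coordinate vectors of $\p$ (by equilibrium); since $\p$ has $d$-dimensional affine span, these $d+1$ vectors are linearly independent and hence a basis of $\ker \Omega$. Consequently, each column of $Q$ is an affine combination of the coordinates of $\p$, which exactly means there is an affine map $\phi(\x) = A \x + \b$ with $A \in \R^{D \times d}$ and $\b \in \R^D$ such that $\q_i = \phi(\p_i)$ for all $i$.

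It remains to show that $\phi$ is an isometry of affine spaces. The edge-length equalities $|\q_i - \q_j|^2 = |\p_i - \p_j|^2$ translate to $(\p_i - \p_j)^T (A^T A - I)(\p_i - \p_j) = 0$ for every edge $\{i, j\}$. Now $A^T A - I$ is a symmetric $d \times d$ matrix, and the ``no conic at infinity'' hypothesis says exactly that no nonzero such matrix can vanish on all edge directions of $(G,\p)$. Hence $A^T A = I$, so $\phi$ is an isometric embedding of $\la \p \ra$ into $\R^D$, extending to an isometry of $\R^D$; thus $(G, \q)$ is congruent to $(G, \p)$. The main point that will need care is the kernel identification: pinning down $\ker \Omega$ as the span of $\mathbf{1}$ together with the coordinate functions of $\p$, which uses both the exact rank bound on $\Omega$ and the $d$-dimensionality of $\la \p \ra$. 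Once this is in hand, the conic-at-infinity hypothesis converts the last step into a clean linear-algebra argument.
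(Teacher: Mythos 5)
The paper states this as Connelly's classic theorem (citing \cite{Connelly-energy}) and does not give its own proof, so there is no in-paper argument to compare against; your proof is the standard stress-energy argument from that reference and it is correct. Every step is sound: the energy identity $E_\omega(\q) = \operatorname{tr}(Q^T\Omega Q)$, the deduction $\Omega Q = 0$ from PSD-ness and zero energy, the identification of $\ker\Omega$ with $\operatorname{span}\{\mathbf{1}, P^{(1)},\dots,P^{(d)}\}$ from the exact rank and the $d$-dimensional affine span, the resulting affine map $\q_i = A\p_i + \b$, and finally the use of the conic-at-infinity hypothesis to force $A^TA = I$. The only stylistic caveat worth noting is that when you pass to coordinates in $\RR^d$ for a framework that originally lives in higher ambient dimension, the affine frame for $\langle\p\rangle$ should be taken orthonormal so the coordinate representation is isometric; the paper makes the same implicit choice in its definition of conic at infinity.
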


Alfakih and Ye~\cite{Alfakih-Ye-general-position}, 
showed that one can easily 
avoid the explicit assumption about conics at infinity in the case
of general position.  
\begin{theorem}
Let $(G,\p)$ be a framework
with a $d$-dimensional affine span.
If $(G,\p)$ is in general affine position within $\langle \p \rangle$
and
has an (even indefinite) equilibrium stress matrix of rank $n-d-1$,
then the edge directions of $(G,\p)$ do not lie on a conic at 
infinity of $\langle \p \rangle$.  

Thus if $(G,\p)$ is a framework 
with a $d$-dimensional affine span and in 
general affine position within $\langle \p \rangle$
and it
has a PSD equilibrium stress matrix of rank $n-d-1$, then
it is super stable and thus universally rigid. 
\end{theorem}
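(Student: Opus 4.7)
The plan is to prove the first (more general) claim by contradiction; the second (PSD) statement then follows immediately from the super-stability theorem quoted just above. So I suppose there exists a nonzero symmetric $d \times d$ matrix $Q$ with $(\p_i - \p_j)^T Q(\p_i - \p_j) = 0$ for every edge $\{i,j\}$ of $G$. The strategy is to manufacture from $Q$ a new element of $\ker \Omega$, reinterpret it as a quadric passing through the configuration, and then use general affine position together with the rank hypothesis to derive a contradiction.

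The central construction is the vector $v \in \RR^n$ defined by $v_i := \p_i^T Q \p_i$. Using the identity $v_i - v_j = (\p_i - \p_j)^T Q(\p_i + \p_j)$ (valid since $Q$ is symmetric) together with the conic-at-infinity assumption on edges, a direct calculation gives $v_i - v_j = 2(\p_i - \p_j)^T Q \p_i$ whenever $\{i, j\}$ is an edge. Combining this with $\Omega_{ij} = -\omega_{ij}$ off the diagonal and zero row sums,
\[
(\Omega v)_i = \sum_j \omega_{ij}(v_i - v_j) = 2\,\p_i^T Q \sum_j \omega_{ij}(\p_i - \p_j) = 0
\]
by equilibrium at vertex $i$; hence $v \in \ker \Omega$. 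Since $\text{rank}\,\Omega = n - d - 1$, one has $\dim \ker \Omega = d+1$, and the $d+1$ linearly independent vectors $\mathbf{1}$ and the $d$ coordinate vectors of $\p$ already span this kernel, so I obtain $\alpha \in \RR$ and $\beta \in \RR^d$ with $v_i = \alpha + \beta^T \p_i$ for all $i$. Every vertex thus lies on the quadric $g(x) := x^T Q x - \beta^T x - \alpha$, and restricting $g$ to the line $\ell(t) = \p_i + t(\p_j - \p_i)$ through any edge gives a polynomial in $t$ whose $t^2$-coefficient is $(\p_j - \p_i)^T Q(\p_j - \p_i) = 0$. So $g|_\ell$ is linear in $t$ and vanishes at $t = 0, 1$, forcing the entire line through $\p_i$ and $\p_j$ to lie on $\{g = 0\}$.

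The hard part is then to conclude $Q = 0$ from all this, contradicting the initial assumption. One must show that no quadric $g$ with nonzero quadratic part can simultaneously (i) vanish on $n \ge d + 2$ points in general affine position in $\RR^d$, (ii) contain every edge line of $G$, and (iii) coexist with an $\Omega$ of rank exactly $n - d - 1$. My plan is a case analysis on the factorization of $g$. If $g$ splits as $g = L_1 L_2$, the zero set is a pair of hyperplanes $H_1 \cup H_2$; general affine position forces $|\p \cap H_k| \le d$ for each $k$, and each edge must lie entirely inside a single $H_k$. This makes $\Omega$ essentially block-diagonal along the $H_1, H_2$ partition, and each block is a stress matrix of a sub-configuration inside an affine subspace of dimension at most $d - 1$; the resulting bound $\text{rank}\,\Omega \le n - 2d$ contradicts the rank hypothesis whenever $d \ge 2$ (the case $d = 1$ is trivial, since the conic-at-infinity condition on an edge in $\RR^1$ already forces $\p_i = \p_j$). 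If instead $g$ is irreducible, containment of many edge lines in $\{g = 0\}$ is already extremely restrictive: a smooth conic in $\RR^2$ contains no line at all, while for $d \ge 3$ lines on a smooth quadric lie in just a few rulings whose projective geometry, combined with general affine position of the $n$ vertices, again forces $\text{rank}\,\Omega$ strictly below $n - d - 1$. This structural analysis is where the real work of the theorem sits.
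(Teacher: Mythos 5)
Your opening is correct, and it is the right first move. Setting $v_i := \p_i^T Q \p_i$, the equilibrium condition plus the conic-at-infinity identity indeed give $\Omega v = 0$; since $\Omega$ has rank $n-d-1$, its kernel is exactly the span of $\mathbf 1$ and the coordinate vectors of $\p$, so every vertex lies on a quadric $g=0$ with quadratic part $Q$, and for each edge $\{i,j\}$ the whole line through $\p_i,\p_j$ lies in $\{g=0\}$. (For what it is worth, the paper cites this theorem to Alfakih and Ye without reproducing a proof, so there is no in-paper argument to compare against.)

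The gap is the finishing case analysis on the factorization of $g$, which does not close. In the reducible case $g=L_1L_2$ you implicitly assume the vertex set splits cleanly between the two hyperplanes; but vertices lying on $H_1\cap H_2$ can carry edges into both sides, so $\Omega$ need not be block diagonal and the bound $\mathrm{rank}\,\Omega \le n-2d$ does not follow. More seriously, the irreducible case for $d\ge 3$ is only gestured at, and for $d\ge 4$ a smooth quadric contains a positive-dimensional family of lines through each of its points, so the ``few rulings'' heuristic yields no degree bound. The ingredient you are missing is a lemma that genuinely uses your hypothesis (iii): \emph{if $\p$ is in general affine position and $\Omega$ is an equilibrium stress matrix of rank $n-d-1$, then every vertex of $G$ has at least $d+1$ neighbors.} This follows from Gale duality: write $\Omega = Z\Phi Z^T$ with $Z$ an $n\times(n-d-1)$ matrix whose rows $z_i$ are in general linear position (this is dual to general affine position of $\p$) and $\Phi$ nonsingular; a vertex $j$ with at most $d$ neighbors has at least $n-d-1$ non-neighbors $i$, for each of which $z_i^T\Phi z_j=0$, and since those $z_i$ span $\RR^{n-d-1}$ this forces $z_j=0$, contradicting general position. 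Once you have this, no classification of quadrics is needed. Polarizing your identity shows the bilinear form of the homogenized quadric vanishes on $(\p_i,1),(\p_j,1)$ whenever $i=j$ or $\{i,j\}$ is an edge; each vertex $j$ together with its $\ge d+1$ neighbors supplies at least $d+2$ lifted vectors, any $d+1$ of which are independent by general affine position, so the homogenized form annihilates each $(\p_j,1)$ and hence is zero, giving $Q=0$.
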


%%%%%%%%%%%%%%%%
\subsection{Rigidity of Graphs}
%%%%%%%%%%%%%%%%

We say that
a configuration $\p$, or a framework $(G,\p)$, in $\RR^d$
is \defn{generic}, if there
is no non-zero  polynomial relation, with coefficients in $\QQ$,
among the coordinates of
$\p$.

We say that a graph $G$ is 
\defn{generically locally rigid (resp. flexible)} in $\RR^d$ if 
every generic framework of $G$ in $\RR^d$ is locally rigid (resp. flexible)
in $\RR^d$. 

We say that a graph $G$ is 
\defn{generically infinitesimally rigid (resp. flexible)} in $\RR^d$ if 
every generic framework of $G$ in $\RR^d$ is 
infinitesimally rigid (resp. flexible) in $\RR^d$.

As described in~\cite{Asimow-Roth-I,Asimow-Roth-II},
generic local rigidity is determined by generic infinitesimal 
rigidity
\begin{theorem}
\label{thm:glr}
If some framework $(G,\p)$ in $\RR^d$
is infinitesimally rigid in $\RR^d$, then the graph 
$G$ is generically infinitesimally rigid in $\RR^d$ and thus
generically 
locally rigid in $\RR^d$.

If a  graph, $G$, is not
generically infinitesimally rigid in $\RR^d$ then it is 
generically locally flexible in $\RR^d$.

Thus, if $G$ is not generically locally rigid in $\RR^d$
then it is 
generically locally  flexible in $\RR^d$.

\end{theorem}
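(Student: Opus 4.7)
The plan is to prove both halves via a semicontinuity argument on the rank of the rigidity matrix $R(\p)$, coupled with the constant rank theorem for the flexibility half.

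For the first statement, I would observe that the entries of $R(\p)$ are polynomials in the coordinates of $\p$ with integer coefficients, so each $k \times k$ minor is such a polynomial. The set $\mathcal{U}_r = \{\p : \operatorname{rank} R(\p) \geq r\}$ is therefore the union of non-vanishing loci of polynomials with integer (hence rational) coefficients, i.e., a Zariski-open subset of $(\RR^d)^n$ defined over $\QQ$. Let $r_{\max}$ be the largest value of $r$ for which $\mathcal{U}_r$ is nonempty; by hypothesis there is some $\p_0$ with $\operatorname{rank} R(\p_0) = nd - \binom{d+1}{2}$, so $r_{\max} = nd - \binom{d+1}{2}$ and $\mathcal{U}_{r_{\max}} \neq \emptyset$. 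Any generic $\q \in (\RR^d)^n$ satisfies no nonzero polynomial with rational coefficients, so it lies in every nonempty $\QQ$-defined Zariski-open set, in particular in $\mathcal{U}_{r_{\max}}$. Hence $R(\q)$ attains full rank $nd - \binom{d+1}{2}$ and $(G,\q)$ is infinitesimally rigid. The implication to generic local rigidity is then immediate from the classical theorem that infinitesimal rigidity implies local rigidity, assumed in the excerpt.

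For the second statement, suppose $G$ is not generically infinitesimally rigid, so that at a generic $\p$ the maximum rank $r_{\max}$ of $R$ is strictly less than $nd-\binom{d+1}{2}$. Define the edge-length map $f : (\RR^d)^n \to \RR^m$ by $f(\p) = (\ldots, |\p_i - \p_j|^2, \ldots)$, whose Jacobian at $\p$ is (up to a factor of $2$) the transpose of $R(\p)$. On the Zariski-open set $\mathcal{U}_{r_{\max}}$ the Jacobian of $f$ has constant rank $r_{\max}$. Pick a generic $\p \in \mathcal{U}_{r_{\max}}$; the constant rank theorem then provides a neighborhood of $\p$ in which $f^{-1}(f(\p))$ is a smooth submanifold of dimension $nd - r_{\max}$.

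The final step is to separate out the trivial motions. A generic configuration affinely spans $\RR^d$, so the orbit under the $\binom{d+1}{2}$-dimensional Euclidean isometry group is a smooth submanifold of dimension exactly $\binom{d+1}{2}$ contained in $f^{-1}(f(\p))$. Because $nd - r_{\max} > \binom{d+1}{2}$, the fiber is strictly larger than this orbit, so there is a smooth arc in $f^{-1}(f(\p))$ transverse to the orbit through $\p$. This arc is a non-trivial edge-length-preserving motion, exhibiting $(G,\p)$ as locally flexible; hence $G$ is generically locally flexible. The third statement follows by combining the first two. The main technical point is this last step: the constant rank theorem alone only produces motions of the configuration, and one needs the generic spanning property plus a dimension count to guarantee that at least one of them escapes the orbit of congruences.
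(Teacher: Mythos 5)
Your proof is correct and is exactly the standard Asimow--Roth argument; the paper does not prove Theorem~\ref{thm:glr} itself but simply cites \cite{Asimow-Roth-I,Asimow-Roth-II}, so your write-up supplies precisely the argument the paper defers to: Zariski-openness of the maximal-rank locus (defined over $\QQ$) for the first half, and the constant rank theorem applied to the edge-length map on that locus, combined with the dimension count against the $\binom{d+1}{2}$-dimensional isometry orbit, for the second.
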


We say that a graph $G$ is 
\defn{generically (resp. not) globally rigid in $\RR^d$} if 
every generic framework of $G$ in $\RR^d$ is (resp. not) globally rigid
in $\RR^d$.

The following is the easy half of a theorem by 
Hendrickson~\cite{Hendrickson-unique}, which we will need below.
\begin{theorem}
\label{thm:hen}
If $G$ is generically globally rigid in $\RR^d$, then it must
be $(d+1)$-connected.
\end{theorem}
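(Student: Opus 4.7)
I would prove the contrapositive: if $G$ is not $(d+1)$-connected then some generic framework $(G,\p)$ in $\R^d$ is not globally rigid. Because $n\ge d+2$, failure of $(d+1)$-connectedness forces the existence of a vertex cut $S\subseteq V$ with $|S|\le d$; write $V\setminus S = A\sqcup B$ with both pieces nonempty and no edges of $G$ between $A$ and $B$, and fix any generic configuration $\p$.

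The natural way to produce a same-length, non-congruent companion framework is a \emph{partial reflection}. Since $|S|\le d$, the generically affinely independent points $\p_S$ lie in some $(d-1)$-dimensional affine hyperplane $H$ of $\R^d$; $H$ is uniquely determined when $|S|=d$, and when $|S|<d$ it may be chosen from a $(d-|S|)$-parameter family of hyperplanes containing $\p_S$. Let $\sigma_H$ denote the Euclidean reflection through $H$, and define $\q_i=\p_i$ for $i\in S\cup B$ and $\q_i=\sigma_H(\p_i)$ for $i\in A$. Because $\sigma_H$ is an isometry that fixes $\p_S$ pointwise, all edges inside $S\cup A$ and inside $S\cup B$ retain their lengths, and $G$ has no $A$--$B$ edges by construction; hence $(G,\p)$ and $(G,\q)$ have identical edge lengths.

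It remains to show that $(G,\q)$ is not congruent to $(G,\p)$. Any congruence $T$ of $\R^d$ with $T(\q)=\p$ must fix $\p_i$ for every $i\in S\cup B$, so $T$ lies in the pointwise stabilizer of the affine span of $\p_{S\cup B}$. In the principal case, obtained after swapping $A$ and $B$ if necessary, $|S\cup B|\ge d+1$, so generically $\p_{S\cup B}$ affinely spans $\R^d$; then $T$ must be the identity and $\q=\p$ forces $\p_A\subseteq H$, an algebraic condition of codimension $\ge 1$ that is ruled out by genericity of $\p$ (when $|S|=d$) or by generic choice of $H$ within its family (when $|S|<d$).

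The main obstacle is the residual case in which $|S\cup A|,|S\cup B|\le d$ (which forces $|S|<d$): the pointwise stabilizer of $\p_{S\cup B}$ is then a positive-dimensional orthogonal group, so non-identity $T$ cannot be excluded by a bare dimension count. I would handle this by exploiting the $(d-|S|)$-dimensional family of admissible hyperplanes through $\p_S$: if every companion framework $\q(H)$ in this family were congruent to $\p$, the conditions $T(H)\bigl(\sigma_H(\p_i)\bigr)=\p_i$ for $i\in A$, together with $T(H)$ fixing $\p_{S\cup B}$ pointwise, would impose a non-trivial system of algebraic equations on the coordinates of $\p$; a generic $\p$ satisfies no such system, so some choice of $H$ breaks every potential congruence and completes the proof.
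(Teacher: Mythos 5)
The paper does not actually prove this statement; it cites the easy direction of Hendrickson's theorem \cite{Hendrickson-unique}, whose proof is precisely the partial-reflection argument you reconstruct. Your handling of what you call the principal case, where after swapping $A$ and $B$ one has $|S\cup B|\ge d+1$, is correct and is the standard argument: the pointwise stabilizer of $\p_{S\cup B}$ is trivial generically, so a congruence taking $\q$ to $\p$ forces $\p_A\subseteq H$, and a generic choice of $\p$ (or of $H$ when $|S|<d$) rules this out.

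The residual case is where the write-up has a real gap. First, a small error: when $|S\cup A|,|S\cup B|\le d$ the constraints $n\ge d+2$ and $|A|,|B|\ge 1$ force $|S|\le d-2$ (not merely $|S|<d$), and the pointwise stabilizer of $\p_{S\cup B}$ is $O(d-|S\cup B|+1)$, which is positive-dimensional only when $|S\cup B|\le d-1$; for $|S\cup B|=d$ it is the two-element group $\{\mathrm{id},\rho\}$ and no dimension problem arises at all. More importantly, your closing sentence, that requiring $\q(H)$ to be congruent to $\p$ for \emph{every} admissible $H$ ``would impose a non-trivial system of algebraic equations on the coordinates of $\p$,'' is an assertion, not an argument: because of the universal quantifier over $H$, it is not obvious why this condition fails generically, and the bare comparison of the $(d-|S|)$-dimensional family of hyperplanes against the stabilizer dimension $\binom{d-|S\cup B|+1}{2}$ can go the wrong way (for example $d=10$, $|S|=1$, $|A|=9$, $|B|=2$, $n=12$ gives $9<28$). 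The statement you want is still true, but establishing it needs a finer analysis of how the set of reflections in the pointwise stabilizer $G_1\cong O(d-|S|+1)$ of $\mathrm{aff}(\p_S)$ sits inside the product $G_2'G_3'$ of the stabilizers of $\p_{S\cup B}$ and $\p_{S\cup A}$; the dimension bound $\binom{d-|S\cup A|+1}{2}+\binom{d-|S\cup B|+1}{2}$ for that product does not by itself dominate the $(d-|S|)$-dimensional set of reflections.

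A cleaner route around the residual case, and the one I would recommend, is to split on $|S|$ rather than on $|S\cup B|$: when $|S|=d$ you are always in the principal case (since $|B|\ge 1$ gives $|S\cup B|\ge d+1$) and the reflection argument finishes; when $|S|\le d-1$ one exhibits a non-trivial \emph{infinitesimal} flex by applying an infinitesimal rotation about $\mathrm{aff}(\p_S)$ to $\p_A$ and the zero velocity to $\p_{S\cup B}$. The space of such rotations has dimension $\binom{d-|S|+1}{2}$, while the ones that extend to a trivial motion of the whole framework lie in a set of dimension at most $\binom{d-|S\cup A|+1}{2}+\binom{d-|S\cup B|+1}{2}$, and since $n\ge d+2$ forces $(d-|S\cup A|+1)+(d-|S\cup B|+1)\le (d-|S|+1)-1$, the former strictly exceeds the latter. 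One then invokes Theorem~\ref{thm:glr} to conclude generic flexibility and hence failure of global rigidity. This avoids working only with the non-group set of reflections and makes the dimension count unambiguous.
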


Connelly~\cite{Connelly-global} 
proved the following sufficient condition  for 
global rigidity.
\begin{theorem}
\label{thm:suff}
If some generic framework $(G,\p)$ in $\RR^d$
has an (even indefinite) equilibrium stress matrix 
of rank $n-d-1$, then the graph $G$ is
generically globally rigid in $\RR^d$.
\end{theorem}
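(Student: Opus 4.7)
The plan is to show that the rank-$(n{-}d{-}1)$ stress matrix $\Omega$ forces any other framework of $G$ in $\RR^d$ with the same edge lengths to be an affine image of $\p$, and then to use genericity to promote the affine equivalence to a congruence.

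I would first set up an energy functional on configurations. If $\omega$ is the equilibrium stress vector with stress matrix $\Omega$, set $E_\omega(\r) := \sum_{\{i,j\} \in E(G)} \omega_{ij}|\r_i - \r_j|^2$. A direct computation gives $E_\omega(\r) = \sum_{k=1}^d (\r^{(k)})^T \Omega\, \r^{(k)}$, where $\r^{(k)} \in \RR^n$ collects the $k$-th coordinate of each point of $\r$. This quantity depends only on the edge lengths of $(G,\r)$. Since $\Omega$ is an equilibrium stress for $\p$, each $\p^{(k)}$ lies in $\ker\Omega$, so $E_\omega(\p) = 0$. As $\p$ is generic, its coordinate vectors together with the all-ones vector span a $(d{+}1)$-dimensional subspace of $\ker\Omega$, which by the rank hypothesis is all of $\ker\Omega$.

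Now suppose $(G,\q)$ in $\RR^d$ has the same edge lengths as $(G,\p)$; then $E_\omega(\q) = 0$. In the classical PSD case each summand $(\q^{(k)})^T\Omega\,\q^{(k)}$ is non-negative, so each must vanish individually, forcing each $\q^{(k)}$ into $\ker\Omega$; hence $\q = A\p + t$ is an affine image of $\p$ for some $d \times d$ matrix $A$ and translation $t$. For the indefinite case I would pass to the averaged framework $\mathbf{m} := (\p+\q)/2$: a short calculation shows that the difference $(\p-\q)/2$ is a first-order flex of $(G,\mathbf{m})$, and combining this with the rank condition on $\Omega$ and the generic infinitesimal rigidity of $(G,\p)$ (itself implied by the existence of a rank-$(n{-}d{-}1)$ stress) one again concludes $\q = A\p + t$. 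Given this affine equivalence, the edge-length equality gives $(\p_i - \p_j)^T(A^TA - I)(\p_i - \p_j) = 0$ for every edge of $G$, so the edge directions of $(G,\p)$ would lie on a conic at infinity; the Alfakih--Ye theorem stated above rules this out, so $A \in O(d)$ and $(G,\q)$ is congruent to $(G,\p)$.

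The main obstacle is the indefinite case. In the PSD regime one uses non-negativity of a quadratic form to conclude that its zeros are exactly $\ker\Omega$; without that sign control, one must replace the energy minimization with a more delicate algebraic-geometric argument on the measurement map. Finally, to conclude that $G$ is generically globally rigid (not merely that the specific $\p$ is globally rigid), one observes that the rank-$(n{-}d{-}1)$ stress matrix condition is Zariski-open on configurations and equilibrium stresses, so the argument applies verbatim to every generic framework of $G$.
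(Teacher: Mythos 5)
This is a theorem the paper cites without proof (Connelly's 2005 result \cite{Connelly-global}), so there is no internal proof to compare against; I will assess your attempt on its own terms. Your argument is sound where it tracks the 1982 super-stability machinery — in the PSD case, $E_\omega$ is a non-negative quadratic form whose zero set is exactly $\ker\Omega$ applied coordinate-wise, so equal edge lengths force $\q$ to be affine-equivalent to $\p$, and genericity of $\p$ together with the Alfakih--Ye conic-at-infinity exclusion then gives congruence. The closing Zariski-openness observation, promoting global rigidity of the particular $\p$ to generic global rigidity of $G$, is also fine.

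The problem is that the whole content of this theorem is the parenthetical ``(even indefinite),'' and that is precisely where your argument breaks. When $\Omega$ is indefinite, $E_\omega(\q)=0$ no longer forces each $\q^{(k)}$ into $\ker\Omega$, and the averaging trick you invoke does not recover this. Passing to $\mathbf{m}=(\p+\q)/2$ gives you that $(\p-\q)/2$ is a first-order flex of $(G,\mathbf{m})$, but $\mathbf{m}$ is an arbitrary (in general, far-away and non-generic) configuration; infinitesimal rigidity of $(G,\p)$ gives you no control over the flex space of $(G,\mathbf{m})$, so the claimed conclusion $\q=A\p+t$ simply does not follow. You also assert, in passing, that the existence of a rank-$(n{-}d{-}1)$ stress at a generic $\p$ implies generic infinitesimal rigidity of $G$; this is true but is itself a nontrivial consequence of the theorem, not an input you can assume. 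You flag the indefinite case yourself as ``the main obstacle'' needing a ``more delicate algebraic-geometric argument on the measurement map'' — that is accurate, and it is exactly the missing content. Connelly's actual proof proceeds through the geometry of the squared-edge-length map (regularity, fiber dimensions, and a careful analysis of how the rank-$(n{-}d{-}1)$ stress constrains the fiber through a generic point), not through energy minimization, and that machinery cannot be recovered by the averaging device alone.
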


This was refined slightly in~\cite{Gortler-Thurston,coning} 
giving the following sufficient certificate for 
generic global rigidity
\begin{theorem}
\label{thm:suff2}
If some framework $(G,\p)$ in $\RR^d$
is infinitesimally  rigid in $\RR^d$
and $(G,\p)$ has an (even indefinite) equilibrium stress matrix $\Omega$
of rank $n-d-1$, then the graph $G$ is
generically globally rigid in $\RR^d$.
\end{theorem}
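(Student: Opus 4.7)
The plan is to reduce Theorem~\ref{thm:suff2} to Connelly's Theorem~\ref{thm:suff} by producing a generic configuration $\p'$ near $\p$ at which $(G,\p')$ still has an equilibrium stress matrix of rank $n-d-1$. Once such a $\p'$ is found, Theorem~\ref{thm:suff} immediately implies that $G$ is generically globally rigid in $\R^d$.

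The first task is to extend the given stress $\omega$ to a continuously-varying family of equilibrium stress vectors on a neighborhood of $\p$. Infinitesimal rigidity says $R(\p)$ attains its maximum possible rank, and this maximum is attained on a Zariski-open set $U\subseteq\R^{nd}$ containing $\p$. On $U$, all the cokernels of $R(\cdot)$ have the same dimension, so the equilibrium stress spaces assemble into an algebraic vector subbundle of the trivial $\R^m$-bundle over $U$; the orthogonal projection onto this subbundle is a continuous (in fact locally rational) function of $\p'$. Applying this projection to $\omega$ yields a continuous family $\omega(\p')$ of equilibrium stresses on $U$, with $\omega(\p)=\omega$ since $\omega$ already lies in the stress space at $\p$.

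The second task is to upgrade this to an equality of ranks for the associated stress matrix $\Omega(\p')$. By lower semicontinuity of matrix rank, there is a Euclidean-open neighborhood $W\subseteq U$ of $\p$ on which the rank of $\Omega(\p')$ is at least $n-d-1$. For the matching upper bound, observe that the rank-$(n-d-1)$ hypothesis forces $\p$ to affinely span $\R^d$: the kernel of any equilibrium stress matrix contains the all-ones vector together with each of the $d$ coordinate vectors of the configuration, spanning a $(k+1)$-dimensional subspace of $\R^n$ when the configuration has $k$-dimensional affine span, so the rank is at most $n-k-1$, forcing $k=d$. Full affine span is itself an open condition, so after shrinking $W$ we may assume every $\p'\in W$ affinely spans $\R^d$; hence the rank of $\Omega(\p')$ is at most $n-d-1$ throughout $W$. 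Combining the two bounds, the rank is exactly $n-d-1$ on all of $W$.

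Finally, the non-generic configurations in $\R^{nd}$ form a countable union of proper algebraic subvarieties (one per nonzero polynomial with rational coefficients) and hence have empty Euclidean interior. Since $W$ is a nonempty Euclidean-open set, it contains a generic configuration $\p'$; the framework $(G,\p')$ is then generic and carries an equilibrium stress matrix of rank $n-d-1$, and Theorem~\ref{thm:suff} concludes that $G$ is generically globally rigid in $\R^d$. The one delicate step is the first: verifying that $\omega$ extends continuously to nearby configurations via the local subbundle structure of the stress spaces; once that is in hand, everything after is routine lower semicontinuity plus density.
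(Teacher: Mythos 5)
The paper itself does not prove Theorem~\ref{thm:suff2}; it simply cites \cite{Gortler-Thurston,coning}, so there is no in-paper argument to compare against, and your proposal should be judged on its own terms. Your overall plan is the right one: extend the given stress $\omega$ over a constant-rank neighborhood of $\p$, observe by lower semicontinuity that a nearby generic configuration still carries a rank-$(n-d-1)$ equilibrium stress matrix, and invoke Theorem~\ref{thm:suff}. The bundle/continuity argument for extending $\omega$ and the Baire-style density argument for finding a generic point in $W$ are both sound.

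There is, however, a genuine error in the step that establishes that $\p$ affinely spans $\R^d$. You claim that the rank hypothesis alone forces this, reasoning that the kernel of $\Omega$ contains the all-ones vector and the $d$ coordinate vectors, which span a $(k+1)$-dimensional subspace when the affine span has dimension $k$, so $\mathrm{rank}(\Omega)\le n-k-1$, ``forcing $k=d$.'' But combining $\mathrm{rank}(\Omega)\le n-k-1$ with $\mathrm{rank}(\Omega)=n-d-1$ only gives $n-d-1\le n-k-1$, i.e. $k\le d$, which is vacuous. The rank hypothesis genuinely does not imply full affine span: for example, four collinear points in $\R^2$ carrying the triangle stress $\omega_{12}=2,\omega_{13}=-1,\omega_{23}=2$ (extended by zero) have a rank-$1=n-d-1$ equilibrium stress matrix while the affine span is $1$-dimensional. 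What actually forces $k=d$ is the \emph{infinitesimal rigidity} hypothesis together with the standing assumption $n\ge d+2$: a configuration whose affine span has dimension $k<d$ admits nontrivial ``out-of-subspace'' infinitesimal flexes $\p'_i=(\mathbf 0_k,\mathbf c_i)$ that are flexes of the complete graph, so $(G,\p)$ could not be infinitesimally rigid in $\R^d$. Once you justify $k=d$ this way, the rest of your argument goes through unchanged. (Alternatively, you can skip the upper-bound-on-all-of-$W$ step entirely: the chosen $\p'$ is generic, hence has full affine span, hence $\mathrm{rank}(\Omega(\p'))\le n-d-1$ automatically, and combined with the lower-semicontinuity lower bound you get equality exactly where you need it.)
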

Thus, the pair $\Omega$ and $(G,\p)$ serve as a certificate
for the generic global rigidity of $G$ in $\RR^d$.
Note that this does not imply that the specific framework $(G,\p)$ in
the above certificate is globally rigid in $\RR^d$~\cite{coning}.

Gortler Healy and Thurston~\cite{Gortler-Thurston} 
proved the strong converse to 
Theorem~\ref{thm:suff}.
\begin{theorem}
\label{thm:necc}
If some generic framework $(G,\p)$ in $\RR^d$
does not have equilibrium stress matrix 
of rank $n-d-1$, then the graph $G$ is not
generically globally rigid in $\RR^d$.
It is, in fact, generically not globally rigid in $\RR^d$.
Thus, if a graph $G$ is not generically globally rigid in $\RR^d$
then it is 
generically not globally rigid in $\RR^d$.
\end{theorem}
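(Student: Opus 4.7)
I would prove the theorem via a complexification/dimension-count argument followed by a real-algebraic descent. First, if $G$ is not generically infinitesimally rigid, Theorem~\ref{thm:glr} supplies a non-trivial continuous flex of any generic $\p$ whose endpoints are equivalent and generically non-congruent, witnessing generic non-global-rigidity; so we may assume $G$ is generically infinitesimally rigid. Complexify the squared-edge-length map to $f_G\colon\CC^{nd}\to\CC^m$ and let $V_G$ be the Zariski closure of its image; by generic infinitesimal rigidity, $V_G$ is irreducible of dimension $nd-\binom{d+1}{2}$, and each generic complex fiber of $f_G$ is a finite disjoint union of orbits of the complex Euclidean group. Let $N(G)$ denote this generic orbit count. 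A real-algebraic descent ($V_G$ and the group action are defined over $\QQ$) converts $N(G)\ge 2$ into the generic existence of non-congruent real equivalents of $\p$, so the task reduces to proving $t<n-d-1\Rightarrow N(G)\ge 2$, where $t$ is the generic maximum rank of an equilibrium stress matrix.

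The bridge between $t$ and $N(G)$ is the identity $\sum_{\{i,j\}\in E}\omega_{ij}|\p_i-\p_j|^2=\mathrm{tr}(\p^T\Omega\p)$, which vanishes whenever $\Omega\p=0$. For any $\q$ equivalent to $\p$, subtracting the identity for $\q$ from that for $\p$ yields $\mathrm{tr}(\q^T\Omega\q)=0$ for every equilibrium stress $\Omega$ of $(G,\p)$, giving one quadratic condition on $\q$ per independent stress. The stresses of $\p$ thus define a linear family of quadratic forms on the configuration space, and any $\q$ satisfying both the edge-length equations and all of these stress forms lies in a common null-variety whose dimension, minus $\binom{d+1}{2}$ (for congruences), controls how far $f_G$ is from being injective modulo congruence. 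When some $\Omega$ has rank $n-d-1$, its kernel is $(d+1)$-dimensional and already contains the columns of $\p$ and the all-ones vector; combining the null-cone conditions with generic position forces $\q$ to be an affine, and hence congruent, image of $\p$ --- this is the content of Theorem~\ref{thm:suff}. When every stress has rank at most $t<n-d-1$, the common null-cones of the stress forms have extra directions not absorbed by the congruence orbit of $\p$, opening the door to non-congruent equivalents.

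Upgrading this tangent-level count to an actual additional component of the fiber of $f_G$ is the technical heart of the argument and my anticipated main obstacle. I would approach it via a tilt construction in $\R^{2d}$: given a conjectured pair $(\p,\q)$, the joined configuration $\r_i=(\p_i,\q_i)\in\R^{2d}$ admits the edge-length-preserving one-parameter family of rotations in the $(\p,\q)$-plane, so $(\q_i,-\p_i)$ is an infinitesimal flex of $(G,\r)$; this flex pairs against stresses of $(G,\p)$ and provides the duality that converts stress-rank deficit of $(G,\p)$ into an extra complex fiber component of $f_G$. A dimension count on $V_G$ together with analysis of the incidence variety $\{(\p,\q):f_G(\p)=f_G(\q)\}$ should then close the implication $t<n-d-1\Rightarrow N(G)\ge 2$. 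The delicate point is distinguishing a transverse additional fiber component from a direction merely tangent to the trivial orbit of $\p$: this requires irreducibility of the relevant incidence varieties and a careful local-to-global genericity-of-intersection argument, which is where I expect the bulk of the real work to lie.
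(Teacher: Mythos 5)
The paper does not prove Theorem~\ref{thm:necc}; it is cited directly from Gortler, Healy and Thurston~\cite{Gortler-Thurston}, so there is no in-paper argument to compare against. Your sketch does capture the correct high-level architecture of the GHT proof --- complexify the squared edge-length map $f_G$, take the Zariski closure $V_G$ of its image, observe it is irreducible of dimension $nd-\binom{d+1}{2}$ when $G$ is generically infinitesimally rigid, and reduce the problem to showing that the generic fiber contains more than one $E(d,\CC)$-orbit when the maximal generic stress rank $t$ is below $n-d-1$. You also correctly isolate the reduction to the generically-infinitesimally-rigid case.

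The genuine gap is precisely where you flag it, and the tool you reach for is not the right one. The tilt construction in $\RR^{2d}$ (rotating the joined configuration $\r_i=(\p_i,\q_i)$) is a device for passing from a known pair of equivalent non-congruent frameworks to a flex or a stress; it presupposes the pair $(\p,\q)$ whose existence is exactly what you are trying to establish, so it cannot by itself convert the stress-rank deficit into a second fiber component. The actual mechanism in GHT is different: they observe that, at a generic smooth point $\ell\in V_G$, the tangent space $T_\ell V_G\subset\CC^m$ is intrinsic to $V_G$, so its annihilator --- which is precisely the space of equilibrium stress vectors of \emph{any} generic preimage of $\ell$ --- is the same for all such preimages. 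This ``shared stress space'' lemma, together with an analysis of how the kernel of a shared stress matrix of rank $t<n-d-1$ is strictly larger than the affine span plus translations, is what produces the non-congruent equivalent framework; no averaging or tilting is involved. A second, smaller gap: your ``real-algebraic descent'' step is asserted too quickly. Having $N(G)\ge 2$ over $\CC$ does not immediately give a real, non-congruent equivalent of a real generic $\p$, since the extra complex orbit need not contain real points a priori; GHT handle this with additional care (their argument is arranged to produce real witnesses directly rather than by descending from a purely complex count). As written, the proposal correctly frames the problem but leaves the actual content of the theorem --- the implication $t<n-d-1\Rightarrow$ generic non-global-rigidity --- unproved.
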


\begin{remark}
The above theorems tell us that that a graph $G$ is either 
generically (locally / infinitesimally / globally)
rigid 
in $\RR^d$, 
or it is 
generically not (locally/infinitesimally/globally)
rigid
in $\RR^d$.

Due to the semi-algebraic nature of rigidity, 
if $G$ is generically (resp. not) (locally / infinitesimally / globally)
rigid  in $\RR^d$, 
then the only exceptional frameworks 
must be contained in an strict algebraic  subset (defined over $\QQ$)
of configuration space. 

Universal rigidity does not behave so simply. 
In particular, there are  graphs with
Euclidean
open sets of frameworks in $\RR^d$
that are universally rigid, and other open sets of
frameworks in $\RR^d$
that are not universally rigid.  
\end{remark}

The examples above indicate that a  
graph can be generically globally rigid
in $\RR^d$, while having some generic frameworks in $\RR^d$ that 
are not universally rigid. One open question
that has been open in the rigidity community 
since 2010 (see~\cite{Gortler-thurston2}) asks:
\begin{center}
\emph{If $G$ is generically globally rigid in $\RR^d$, must it have	
\emph{some} generic framework in $\RR^d$ that is universally rigid?}
\end{center}

The main result of this paper answers this question 
in the affirmative:
\begin{theorem}
\label{thm:main}
If $G$ is generically globally rigid in $\RR^d$, then 
there exists a framework $(G,\p)$ in $\RR^d$ that is 
infinitesimally rigid in $\RR^d$ and super stable.
Moreover, every framework in a small enough neighborhood of $(G,\p)$
will be infinitesimally rigid in $\RR^d$ and 
super stable, and thus must include some generic framework.
\end{theorem}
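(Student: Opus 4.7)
The plan is to combine the Lov\'asz--Saks--Schrijver (LSS) and Alfakih constructions with the extra freedom afforded by the GGR hypothesis. Since $G$ is generically globally rigid in $\RR^d$, Hendrickson's Theorem~\ref{thm:hen} implies $G$ is $(d+1)$-connected, so LSS yields an orthogonal representation of $G$ in $\RR^{n-d-1}$ in general position. Alfakih's construction converts this into a PSD equilibrium stress matrix $\Omega$ of rank exactly $n-d-1$, whose kernel contains the all-ones vector together with a $d$-dimensional subspace providing coordinates for a framework $(G,\p_0)$ in $\RR^d$ in general affine position. The Alfakih--Ye result from the background section then certifies that $(G,\p_0)$ is super stable, hence universally rigid.

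The main obstacle is that super stability does not automatically yield infinitesimal rigidity: the introduction already notes that for $(d+1)$-connected non-GGR graphs, every framework produced this way can be infinitesimally flexible, so that all outputs lie in a proper algebraic subset of configuration space. To rule this out under GGR, I would exploit the genuine freedom in the LSS step: the set of general-position orthogonal representations of $G$ in $\RR^{n-d-1}$ is an open subset of an algebraic variety, and perturbing the representation continuously yields continuously varying PSD rank-$(n-d-1)$ stress matrices and correspondingly varying super stable frameworks. The critical task is to show that as the representation varies, the resulting configurations sweep out a set not contained in any proper algebraic subvariety of $\RR^{dn}$; I would pursue this by a dimension count or a local submersion argument for the map from orthogonal representations, modulo the orthogonal action on $\RR^{n-d-1}$, into configurations modulo affine equivalence. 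This is where the GGR hypothesis must do real work beyond mere $(d+1)$-connectivity.

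Once Zariski-density is in hand, the conclusion follows rapidly. Since GGR forces $G$ to be generically infinitesimally rigid (global rigidity implies local rigidity, which coincides with infinitesimal rigidity at generic configurations by Theorem~\ref{thm:glr}), and infinitesimal rigidity is a Zariski-open condition, the Zariski-dense locus of super stable frameworks must contain an infinitesimally rigid $(G,\p)$. For the \emph{moreover} clause, I would check stability under small perturbations at such a $\p$: infinitesimal rigidity is open, so the equilibrium stress space has locally constant dimension; the PSD rank-$(n-d-1)$ stress persists by lower semicontinuity of rank together with the affine-span bound $n-d-1$ on the stress matrix rank and continuity of eigenvalues; and general affine position is open. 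Hence a full neighborhood of $\p$ consists of infinitesimally rigid, super stable frameworks, and density of generic configurations supplies a generic framework inside it.
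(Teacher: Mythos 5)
Your opening pipeline---$(d+1)$-connectivity via Theorem~\ref{thm:hen}, LSS orthogonal representations, Alfakih's conversion to a PSD rank-$(n-d-1)$ stress, and Alfakih--Ye certifying super stability for the kernel framework in general affine position---exactly matches the paper's strategy, and you correctly identify the central obstacle: Alfakih's construction alone gives super stability but not infinitesimal rigidity. Your treatment of the \emph{moreover} clause (persistence of a rank-$(n-d-1)$ PSD stress, of no conic at infinity, and of infinitesimal rigidity under small perturbation, then density of generics) is also essentially the paper's Lemma~\ref{lem:nearby} argument and is fine.

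The gap is the middle step, which you leave as ``a dimension count or local submersion argument'' showing that the Alfakih configurations are Zariski-dense, with the acknowledgement that ``this is where the GGR hypothesis must do real work.'' That single sentence is where the whole proof lives, and it cannot be filled by a naive dimension count. First, the right object to analyze is the set $\LL$ of LSS stress matrices, not the configurations directly; the paper's strategy is to show that a full-dimensional subset of $\LL$ must lie in $\ST(\IR)$, i.e.\ arise from infinitesimally rigid frameworks (Proposition~\ref{prop:main}). Second, ruling out the flexible contributions is not a single dimension count but a three-pronged case analysis after stratifying $\IF$ into smooth, irreducible, affine-invariant pieces $\IF_i$ of constant rigidity-matrix rank: (a) when the codimension $C_i$ exceeds the flex-defect $F_i$, a bundle dimension count (Lemma~\ref{lem:CgreaterF}) shows $\dim\ST(\IF_i)<D_L$; (b) when the tangential flex count $T_i\ge 1$, generic frameworks in $\IF_i$ are finite mechanisms, hence not super stable, hence $\ST(\IF_i)$ is \emph{disjoint} from $\LL$ (Lemmas~\ref{lem:mech} and~\ref{lem:ClessF})---a qualitative exclusion a pure dimension count would miss; and (c) GGR enters precisely to rule out the remaining case $C_i\le X_i$, via Connelly's push-pull global-flexibility argument (Lemma~\ref{lem:ClessX}), which constructs a full-dimensional set of non-globally-rigid frameworks if $C_i\le X_i$, contradicting GGR. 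None of these ingredients appears in your sketch. A local submersion argument from the GOR variety into configuration space also does not obviously close the gap, because even if it showed a full-dimensional set of kernel frameworks, you would still need to know a generic such framework is infinitesimally rigid rather than sitting in a flexible stratum that the submersion happens to hit with full measure; the paper's case (b) is exactly what excludes that scenario by a non-dimensional (super-stability vs.\ finite-mechanism) argument.
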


The first part of this theorem tells us that if $G$ is generically
globally rigid in $\RR^d$, then it must have a certificate,
$\Omega$ and $(G,\p)$ 
in the sense of Theorem~\ref{thm:suff2}, where $(G,\p)$ is itself
certifiably super stable and thus globally rigid.

\begin{remark}
Theorem \ref{thm:main} yields a 
weak converse to 
Connelly's Theorem~\ref{thm:suff}. Namely, 
If some generic framework $(G,\p)$ in $\RR^d$
does not have an equilibrium stress matrix 
of rank $n-d-1$, then the graph $G$ is not
generically globally rigid in $\RR^d$.
But this does not, alone, prove that $G$
is, in fact, generically not globally rigid in $\RR^d$.
(This requires showing the existence of an equivalent, but not
congurent gramework for each generic $(G,\p)$.)
\end{remark}

This paper will also use some basic facts from (semi-)algebraic
geometry, which are summarized in the appendix.

%%%%%%%%%%%%%%%%
\section{Stresses from GORs}
%%%%%%%%%%%%%%%%

In~\cite{Lovasz-Schrijver}, Lov\'asz 
Saks and Schriver define a concept called a 
(GOR) general position
orthogonal representation of a graph $G$ in $\RR^{n-d-1}$.
Alfakih~\cite{Alfakih-conn} has shown how these
relate to a certain class of equilibrium stresses for $d$-dimensional
frameworks of $G$.  This section reviews and extends these results.

\subsection{GORs and connectivity}
\begin{definition}
\label{def:GOR}
Let $G$ be a graph and 
let $D$ be a fixed dimension.
An (OR)  \defn{orthogonal representation} of $G$ in $\RR^D$
is a vector configuration $\v$ indexed by the vertices of $G$ in 
$\RR^{D}$ with the following  property:
$\v_i$ is orthogonal to the vectors
associated with each non-neighbor of vertex $i$. 
The set of ORs form an algebraic set (defined over $\QQ$).

A (GOR) \defn{general position orthogonal representation} of $G$ 
in $\RR^D$
is an OR in $\RR^D$ with the added property that 
the $\v_i$ are in general linear position.
The set of GORs form a semi-algebraic 
set (defined over $\QQ$). 

\end{definition}
The relevant results from \cite{Lovasz-Schrijver} are the following.  
\begin{theorem}\label{thm:lss}
Let $G$, a graph on $n$ vertices,
be  $(n-D)$-connected for some $D$.  Then $G$ must
have a GOR in dimension $\R^D$ \cite[Theorem 1.1]{Lovasz-Schrijver}.  
Moreover, the set of all such GORs of $G$ is irreducible~\cite[Theorem 2.1]{Lovasz-Schrijver}.
\end{theorem}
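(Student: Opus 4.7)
The plan is to prove both claims of Theorem~\ref{thm:lss} by induction on $n$, working with the real algebraic variety of orthogonal representations
\[
\mathcal{O}_G \;:=\; \{(\v_1,\ldots,\v_n) \in (\R^D)^n : \iprod{\v_i}{\v_j}=0 \text{ for all non-edges } \{i,j\}\}.
\]
The set of GORs is the Zariski-open subset of $\mathcal{O}_G$ where every $D$-subset of vectors is linearly independent, so irreducibility of $\mathcal{O}_G$ (combined with nonemptiness of this open subset) gives irreducibility of the GOR set.

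Since $G$ is $(n-D)$-connected, $\delta(G) \ge n-D$, so every vertex has at most $D-1$ non-neighbors. Fix any vertex $v$; then $G-v$ has $n-1$ vertices and is $((n-1)-D)$-connected, since removing one vertex decreases connectivity by at most one. By induction, $\mathcal{O}_{G-v}$ is irreducible and contains a GOR. Consider the forgetful projection $\pi: \mathcal{O}_G \to \mathcal{O}_{G-v}$. The fiber over $(w_1,\ldots,w_{n-1})$ is the linear subspace of $\R^D$ orthogonal to $\{w_j : j \not\sim v\}$, which has dimension at least $D-(D-1)=1$. Over the Zariski-dense open GOR locus, where the $w_j$ with $j\not\sim v$ are linearly independent, this fiber dimension is exactly $D-|N(v)|$, a constant.

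A standard fibration argument over the irreducible base, combined with a careful check that the jumps in fiber dimension over the non-generic locus cannot produce extra top-dimensional components, then yields irreducibility of $\mathcal{O}_G$, establishing the second claim. For existence of a GOR of $G$, take a GOR $(w_1,\ldots,w_{n-1})$ of $G-v$ and extend by some $\v$ in the $\ge 1$-dimensional fiber over it. The extra general-position conditions to verify are, for each $(D-1)$-subset $S \ne N(v)$ of vertices of $G-v$, that $\v \notin \mathrm{span}\{w_j : j \in S\}$; each such failure is a closed condition on $\mathcal{O}_{G-v}$, and by irreducibility of $\mathcal{O}_{G-v}$ it suffices to exhibit one GOR where each of these conditions fails, whence their finite union is still a proper closed subset.

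The main obstacle is the last step: showing that, for each $(D-1)$-subset $S \ne N(v)$, the locus of GORs of $G-v$ whose fiber direction lies inside $\mathrm{span}\{w_j : j \in S\}$ is a \emph{proper} closed subset of $\mathcal{O}_{G-v}$. This is exactly where the $(n-D)$-connectivity hypothesis is used essentially: it must supply enough independent deformations of a GOR of $G-v$ to move the 1-dimensional fiber direction off of any fixed $(D-1)$-dimensional subspace. In the Lov\'asz-Saks-Schrijver argument this is carried out through a matroidal/Jacobian-rank analysis of the orthogonality constraints, using Menger-style vertex-disjoint path arguments to witness the required independent perturbations.
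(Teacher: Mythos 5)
The paper itself does not prove Theorem~\ref{thm:lss} --- both assertions are cited directly to Lov\'asz, Saks, and Schrijver --- so there is no internal proof to compare against; I will assess your reconstruction on its own. The opening reduction introduces a genuine gap: you propose to deduce irreducibility of the GOR set from irreducibility of the full orthogonal-representation variety $\mathcal{O}_G$, using that the GOR locus is a nonempty Zariski-open subset. That implication is sound in the direction you need, but it targets a strictly stronger statement. The variety $\mathcal{O}_G$ can have extra components consisting entirely of degenerate (non--general-position) representations that are not limits of GORs, and in that case $\mathcal{O}_G$ is reducible while the GOR set is still irreducible. Indeed, Section~\ref{sec:stress} of the paper records that the irreducibility of the \emph{OR} set is an open question posed by Lov\'asz et al., with only partial progress; this is why LSS prove irreducibility of the GOR set directly, not of $\mathcal{O}_G$. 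A plan that would, as a byproduct, settle that open question should make you pause.

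The same difficulty resurfaces in the fibration step, which cannot be invoked as a routine lemma. Over an irreducible base, constant fiber dimension gives irreducibility of the total space, but jumping fiber dimension is precisely the mechanism by which extra components appear (compare $\{xy=0\}\subset\RR^2$ projecting to the $x$-axis). The jumps in your projection $\pi:\mathcal{O}_G\to\mathcal{O}_{G-v}$ occur exactly over the degenerate locus where the $w_j$ for non-neighbors of $v$ fail to be independent --- the very locus responsible for potential extra components of $\mathcal{O}_G$ --- so the ``careful check'' you defer is not a technicality but the whole content of the theorem. The way LSS avoid this, and the way the present paper re-derives the machinery in Lemmas~\ref{lem:gori}--\ref{lem:fullA} and Corollary~\ref{cor:gor-dim}, is to induct vertex by vertex \emph{inside} the general-position locus: there the fiber is a Zariski-open subset of a linear space of constant dimension $n-d-i+e_i$, and the degenerate configurations never enter. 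Your final paragraph correctly identifies the $(n-D)$-connectivity hypothesis as the essential input for showing the $1$-dimensional fiber direction can be moved off any fixed $(D-1)$-dimensional span, but it leaves that step --- the crux of the existence argument --- as a gesture toward a ``matroidal/Jacobian-rank analysis'' rather than a proof.
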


In our terminology, we will set $D:=n-d-1$ where $d$ is fixed,
and thus we will need
$(d+1)$-connectivity to obtain GORs in $\RR^{n-d-1}$.

\begin{definition}
Let $G$ be a $(d+1)$-connected graph with $n$ vertices, 
for some $d$.  
Denote by $D_G$ the dimension of the set its GORs in $\RR^{n-d-1}$.
\end{definition}

We wish to compute $D_G$ which is done in the following corollary proven 
below.
\begin{corollary}\label{cor:gor-dim}
Let $G$ be a $(d+1)$-connected graph with $n$ vertices and 
$m$ edges.  Then the dimension $D_G$  is $n(n-d)-\binom{n+1}{2}+m$.
\end{corollary}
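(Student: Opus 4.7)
The plan is to view the GOR set as a Zariski-open piece of the OR variety and do a Jacobian dimension count. The OR variety lives in $\RR^{n(n-d-1)}$ and is cut out by the $\binom{n}{2}-m$ quadratic orthogonality equations $f_{ij}(\v) := \v_i\cdot \v_j = 0$, one per non-edge $\{i,j\}$. By Krull's theorem, every irreducible component of OR has dimension at least $n(n-d-1) - \bigl(\binom{n}{2}-m\bigr)$. Since $G$ is $(d+1)$-connected, Theorem~\ref{thm:lss} produces a GOR $\v$ and tells us the GOR set is irreducible; being a nonempty Zariski-open subset of OR, its dimension equals the local dimension of OR at $\v$.

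The key step is to show that the Jacobian of the orthogonality equations has full row rank $\binom{n}{2}-m$ at $\v$. The gradient $\nabla f_{ij}(\v)$ has $\v_j$ in the vertex-$i$ block of coordinates, $\v_i$ in the vertex-$j$ block, and zero elsewhere. Suppose a linear combination $\sum_{\{i,j\}\text{ non-edge}}\lambda_{ij}\,\nabla f_{ij}(\v)$ vanishes. Reading off the vertex-$i$ block yields
\[
\sum_{j \in N(i)} \lambda_{ij}\, \v_j = 0,
\]
where $N(i)$ denotes the set of non-neighbors of $i$. Because $G$ is $(d+1)$-connected, $\deg(i)\geq d+1$, so $|N(i)| = n-1-\deg(i) \leq n-d-2$, strictly less than the ambient dimension $n-d-1$. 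General linear position of the $\v_k$ then forces $\{\v_j : j\in N(i)\}$ to be linearly independent, so every $\lambda_{ij} = 0$.

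Combining the two directions: the tangent space to OR at $\v$ has dimension exactly $n(n-d-1) - \binom{n}{2} + m$, which upper-bounds the local dimension of OR at $\v$ and matches the Krull lower bound, giving equality. A short arithmetic identity rewrites $n(n-d-1) - \binom{n}{2} + m$ as $n(n-d) - \binom{n+1}{2} + m$, which is the claimed formula. The main subtlety is merely noticing why $(d+1)$-connectivity is the right hypothesis: it is exactly what is needed to keep $|N(i)|$ strictly below $n-d-1$, and it is this inequality that lets general position convert each per-vertex block of the Jacobian dependence into a trivial one.
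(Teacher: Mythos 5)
Your proof is correct but takes a genuinely different route from the paper's. The paper builds $\GOR_i$ one vertex at a time via the forgetful maps $\GOR_i\to\GOR_{i-1}$, computes the fiber dimension $n-d-i+e_i$ at each step using Lemmas~\ref{lem:dimA}--\ref{lem:fullA} together with the fiber dimension theorem (Theorem~\ref{thm:fiber}), and then sums over $i$. You instead work globally: cut out the OR variety by the $\binom{n}{2}-m$ orthogonality equations, and show the Jacobian has full row rank at any GOR $\v$. The linear-algebra core is the same in spirit -- both arguments hinge on $(d+1)$-connectivity forcing the number of non-neighbors to be at most $n-d-2 < n-d-1$, so that general position gives independence -- but your version packages it as a single smoothness claim rather than an inductive fiber count. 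One small wrinkle worth noting: Krull's principal ideal theorem, as usually stated, is an algebraically closed-field statement and does not directly bound dimensions of \emph{real} zero sets (e.g.\ $x^2+y^2=0$ in $\RR^2$ has codimension $2$ from one equation). You don't actually need it: the implicit function theorem already gives that near a point where the Jacobian has full row rank the real zero set is a smooth manifold of exactly the expected dimension, and since this rank argument applies uniformly at \emph{every} GOR point (it only uses $\deg(i)\ge d+1$ and general position), the whole GOR set is a smooth manifold of dimension $n(n-d-1)-\binom{n}{2}+m=n(n-d)-\binom{n+1}{2}+m$, with no appeal to Krull or even to irreducibility needed. What each approach buys: the paper's incremental route mirrors the Lov\'asz--Saks--Schrijver construction and yields the extendability machinery reused elsewhere; your route is shorter and self-contained for the dimension count alone.
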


The idea behind the corollary, which is  present in \cite{Lovasz-Schrijver},
is that we can build a GOR of $G$ by selecting vectors one at a time from a 
linear space of known dimension (that depends on $G$ and the vertex order).  The proof of
the corollary relies on several lemmas that formalize this intuition.
\begin{definition}
 Let $G$ be a graph with $n$ ordered vertices $\{1,2,\ldots,n\}$.  
 Fix $d$. 
 Let $G_i$, for $2 \le i\le n$ be the subgraph of $G$ induced by vertices $j$ such that 
$j \le i$.

Let $\GOR_{i-1}$ be the set of GORs of  $G_{i-1}$ in $\RR^{n-d-1}$,
where $n$ is the number of vertices on the full graph $G$.

Fix $\v^{i-1}$, some  configuration in $\GOR_{i-1}$.
Let $A\subset \R^{n-d-1}$ be the linear span of the $\v_j$ 
in $\v^{i-1}$
corresponding 
to non-neighbors of vertex $i$
in $G_{i-1}$.  
We say that $\v^{i-1}$ is \defn{inextendable} if there is a set of
$\v_j$ in $\v^{i-1}$ of cardinality at most $n-d-2$ such that
$A^\perp$ is in the span of these $\v_j$. Otherwise we say that 
$\v^{i-1}$ is \defn{extendable}.
Every extendable configuration
in $\GOR_{i-1}$ can be extended
to a configuration in $\GOR_i$ by some appropriate placement of
vertex $i$ in $A^\perp$.

Let
$\bar{e}_i$ denote the number of vertices
in $G_{i}$ that are not neighbors of vertex
$i$ and $e_i$ be the number of its neighbors
in $G_i$.
\end{definition}

\begin{lemma}
\label{lem:gori}
Let $G$ be a $(d+1)$-connected graph with $n$ vertices, 
for some $d$.  
Then for any $1 \le i \le n$, we have $\GOR_i$ is non-empty and
irreducible.
\end{lemma}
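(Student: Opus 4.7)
The plan is to proceed by induction on $i$. The base case $i=1$ is immediate: $\GOR_1$ is $\R^{n-d-1}\setminus\{\0\}$, which is non-empty (since $n\ge d+2$) and irreducible. For the inductive step, I assume $\GOR_{i-1}$ is non-empty and irreducible, and analyze the forgetful projection $\pi\colon \GOR_i \to \GOR_{i-1}$ that discards the last vector. Non-emptiness of $\GOR_i$ follows from Theorem~\ref{thm:lss}: since $G$ is $(d+1)$-connected, $\GOR_n$ is non-empty, and restricting any element of $\GOR_n$ to its first $i$ coordinates produces an element of $\GOR_i$, because non-edges of $G_i$ are non-edges of $G$ and general linear position restricts to general linear position.

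For irreducibility, I would first identify the image $E_{i-1}:=\pi(\GOR_i)$ with the set of extendable configurations in $\GOR_{i-1}$. The inextendable condition is a finite union, over size-$(n-d-2)$ subsets $S$ of $\{\v_1,\dots,\v_{i-1}\}$, of the rank-drop condition ``$A^\perp\subseteq \mathrm{span}(S)$,'' each of which is Zariski closed; hence $E_{i-1}$ is Zariski open in $\GOR_{i-1}$. The restriction-from-$\GOR_n$ argument also shows that $E_{i-1}$ is non-empty, so by the inductive hypothesis $E_{i-1}$ is dense and irreducible. A key observation is that general linear position, which forces any $n-d-1$ of the $\v_j$ to be linearly independent, automatically forces every smaller subset to be independent too; combined with the $(d+1)$-connectivity hypothesis (vertex $i$ has at most $n-1-(d+1)=n-d-2$ non-neighbors in $G$), this pins $\dim A=i-1-e_i$ at a constant value $\le n-d-2$ throughout $\GOR_{i-1}$, so that $A^\perp$ has constant positive dimension along $E_{i-1}$.

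With these ingredients in hand, $\GOR_i$ sits inside $E_{i-1}\times\R^{n-d-1}$ as a Zariski-open subset of the algebraic vector bundle whose fiber over $\v^{i-1}$ is the linear space $A^\perp$; the openness comes from the requirement that $\v_i$ avoid the finitely many proper subspaces $A^\perp\cap\mathrm{span}(S)$, and extendability of each base point guarantees that this open fiber is non-empty. Since the base $E_{i-1}$ is irreducible and $\GOR_i$ is a non-empty Zariski open subset of an algebraic vector bundle over it, the total space $\GOR_i$ is irreducible, completing the induction. The part of the argument I expect to demand the most care is the verification that the orthogonality constraints genuinely assemble into a constant-rank vector bundle over $E_{i-1}$ (rather than a merely upper-semicontinuous family), which relies critically on the uniform dimension statement for $A$ derived from general linear position; without this uniform rank the fiber dimension could jump and one would have to rule out spurious components supported on the non-generic stratum.
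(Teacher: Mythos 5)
Your proof is correct in outline, but it takes a genuinely different route from the paper's. The paper's proof is a one-liner: it observes that $G_i$, having $i$ vertices, inherits connectivity at least $d+1-(n-i)$ from the $(d+1)$-connected graph $G$ (deleting a vertex from a $k$-connected graph leaves it at least $(k-1)$-connected), and since $d+1-(n-i) = i-(n-d-1)$ this is exactly the connectivity needed to apply Theorem~\ref{thm:lss} to $G_i$ itself (with its $i$ vertices and target dimension $n-d-1$), giving both non-emptiness and irreducibility of $\GOR_i$ in one stroke. You, by contrast, invoke Theorem~\ref{thm:lss} only for the full graph $G$ (to seed non-emptiness via restriction), and then re-derive irreducibility of each $\GOR_i$ inductively through the constant-rank vector-bundle structure of the projection $\pi\colon\GOR_i\to E_{i-1}\subset\GOR_{i-1}$. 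This is essentially a self-contained re-proof, at the level of subgraphs, of the irreducibility result the paper takes as given from Lov\'asz--Saks--Schrijver; it trades a short citation for a longer argument whose key technical burden (which you correctly flag) is that the bundle of orthogonal complements $A^\perp$ has constant rank over all of $\GOR_{i-1}$. Your justification for this — that $(d+1)$-connectivity bounds $\bar e_i\le n-d-2$, so general position forces the non-neighbor vectors to be independent and $\dim A=\bar e_i$ everywhere — is correct, and the resulting irreducibility of the bundle total space is exactly the kind of fact the paper's appendix Lemma~\ref{lem:bundle0} is designed to supply, so it would be worth citing that lemma rather than asserting "vector bundle over irreducible base is irreducible" as folklore. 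In short: the paper buys brevity by leaning fully on Theorem~\ref{thm:lss}; you buy a more self-contained and mechanistic picture (one that mirrors the dimension-counting machinery of Corollary~\ref{cor:gor-dim}) at the cost of length and of needing the bundle-irreducibility lemma.
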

\begin{proof}
The graph $G_i$ has $i$ vertices
and must be at least
$\left(d+1-(n-i)\right)$-connected. 
(Negative connectivity is the same
as having no connectivity conditions at all).
Meanwhile, in order to apply Theorem~\ref{thm:lss} directly to $G_i$,
we only need it to be $\left(i-(n-d-1)\right)$-connected.
\end{proof}

\begin{lemma}
\label{lem:dimA}
The subspace $A^\perp$ has dimension 
\[
	(n-d-1)-\bar{e}_i = (n-d-1)-(i-1-e_i)
=n-d-i+e_i
\]
\end{lemma}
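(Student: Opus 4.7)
The plan is to read off the dimension of $A$ directly from the general linear position property of $\v^{i-1}$ and then take orthogonal complements. By Definition~\ref{def:GOR}, the vectors in a GOR are in general linear position in $\R^{n-d-1}$, so any subset of size at most $n-d-1$ among them is linearly independent. The subspace $A$ is by construction spanned by the $\bar{e}_i = i-1-e_i$ vectors indexed by the non-neighbors of $i$ in $G_{i-1}$, so once we check $\bar{e}_i \le n-d-1$, general position immediately forces $\dim A = \bar{e}_i$ and hence $\dim A^\perp = (n-d-1) - \bar{e}_i = n-d-i+e_i$ as claimed.

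Verifying $\bar{e}_i \le n-d-1$ is then a short counting argument, which I would split on the size of $i$. When $i \le n-d$, the bound $\bar{e}_i \le i-1 \le n-d-1$ is immediate. When $i > n-d$, I would reuse the connectivity observation from the proof of Lemma~\ref{lem:gori}: vertex $i$ has degree at least $d+1$ in $G$ by $(d+1)$-connectivity, and at most $n-i$ of its neighbors in $G$ can lie outside $G_i$, so $e_i \ge (d+1)-(n-i) = i-n+d+1$, giving $\bar{e}_i \le n-d-2$.

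There is no real obstacle here: the content of the lemma is essentially absorbed into the nontrivial existence statement for a GOR with general-position guarantees (Theorem~\ref{thm:lss} together with Lemma~\ref{lem:gori}), and what remains is pure bookkeeping. The only point worth flagging is that the argument uses general position of the $\v_j$ among themselves, not merely that each individual $\v_j$ is nonzero; it is precisely this property of GORs (as opposed to arbitrary orthogonal representations) that makes the clean dimension formula available.
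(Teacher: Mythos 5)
Your proof is correct and takes the same approach as the paper's one-line argument: general position forces the $\bar{e}_i$ non-neighbor vectors to span a subspace $A$ of dimension exactly $\bar{e}_i$, and then $\dim A^\perp = (n-d-1)-\bar{e}_i$ by orthogonal complementation. You do, however, close a detail the paper leaves implicit: the equality $\dim A = \bar{e}_i$ only follows from general position if $\bar{e}_i \le n-d-1$ (otherwise the span saturates at $n-d-1$ and the stated formula would be negative), and your degree-counting argument from $(d+1)$-connectivity of $G$ is exactly what is needed to justify this; the paper simply asserts the conclusion without verifying that the non-neighbor count does not overshoot the ambient dimension.
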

\begin{proof}
By assumption, the 
$\bar{e}_i$ non-neighbors are in 
general position, giving us the first
expression. The rest is obvious.
\end{proof}

\begin{lemma}
\label{lem:dom}
Let $2 \le i \le n$.
The subset of $\GOR_{i-1}$ that is inextendable is semi-algebraic
and of strictly lower  dimension than $\GOR_{i-1}$. Thus the
extendable subset has full dimension.
\end{lemma}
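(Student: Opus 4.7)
The plan is to combine the irreducibility statement from Lemma \ref{lem:gori} with two observations: that inextendability is a Zariski-closed condition on $\GOR_{i-1}$, and that the extendable locus is non-empty (which follows essentially for free by restricting a GOR of $G_i$).

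First I would check that inextendability is given by the vanishing of finitely many polynomials. For each subset $S \subseteq \{1,\dots,i-1\}$ with $|S|\le n-d-2$, the condition ``$A^\perp \subseteq \mathrm{span}\{\v_j : j\in S\}$'' is equivalent to a rank condition on the matrix whose columns are the vectors in $S$ together with a spanning set for $A^\perp$. The subspace $A^\perp$ itself is cut out polynomially by the non-neighbor vectors of $i$, so this rank condition is defined by the vanishing of certain minors in the entries of $\v^{i-1}$. Taking the finite union over all such $S$, the inextendable locus is Zariski-closed inside $\GOR_{i-1}$, and in particular semi-algebraic.

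Next I would show that the extendable locus is non-empty. By Lemma \ref{lem:gori} applied at index $i$, the set $\GOR_i$ is non-empty, so pick any $\v^i \in \GOR_i$ and consider its restriction $\v^{i-1}=(\v_1,\dots,\v_{i-1})$. The general-position and orthogonality conditions for $G_{i-1}$ are inherited from those for $G_i$ (non-neighbors in $G_{i-1}$ form a subset of those in $G_i$, and any $n-d-1$ of the first $i-1$ vectors are also $n-d-1$ of the full configuration), so $\v^{i-1}\in \GOR_{i-1}$. If this restriction were inextendable, there would exist $S$ with $|S|\le n-d-2$ and $A^\perp \subseteq \mathrm{span}\{\v_j:j\in S\}$; but $\v_i\in A^\perp$, so $\v_i$ would lie in the span of $n-d-2$ of the other vectors, contradicting the general position of $\v^i$. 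Hence $\v^{i-1}$ is extendable.

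Finally, I would conclude using irreducibility. By Lemma \ref{lem:gori}, $\GOR_{i-1}$ is irreducible, so any proper Zariski-closed subset has strictly smaller dimension. Since the inextendable locus is Zariski-closed in $\GOR_{i-1}$ and misses the restriction constructed above, it is a proper closed subset and thus of strictly lower dimension, making the extendable locus full-dimensional (in fact Zariski-open and dense). The only mildly delicate point is keeping the bookkeeping straight when translating ``$A^\perp \subseteq \mathrm{span}\,S$'' into an explicit rank/minor condition on the coordinates of $\v^{i-1}$, but this is routine once one fixes bases for the relevant subspaces and uses that the defining equations for $A^\perp$ depend polynomially on the non-neighbor vectors.
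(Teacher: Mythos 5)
Your proof is correct and follows essentially the same route as the paper: express inextendability via vanishing of minors (hence Zariski-closed), use the irreducibility of $\GOR_{i-1}$ from Lemma~\ref{lem:gori}, and exhibit an extendable point by restricting a GOR of $G_i$. You supply one detail the paper leaves implicit, namely the argument that the restriction is extendable because $\v_i\in A^\perp$ would otherwise lie in the span of at most $n-d-2$ of the other vectors, contradicting general position.
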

\begin{proof}
The conditions describing inextendibility can be described with
algebraic equations (using determinants).
Also, from Lemma~\ref{lem:gori}, $\GOR_{i-1}$ is irreducible. Thus
the inextendable set is either all of $\GOR_{i-1}$ or it is of lower
dimension.

Meanwhile, from Lemma~\ref{lem:gori}
$\GOR_i$ is not empty, thus it contains
some configuration
$\v^i$. By forgetting the last 
vertex, we obtain a configuration $\v^{i-1}$
in $\GOR_{i-1}$ that must be extendable.
\end{proof}

\begin{lemma}
\label{lem:fullA}
Let $2 \le i \le n$.
Suppose that $\v^{i-1}$ is an extendable
configuration in 
$\GOR_{i-1}$. Then there is a Zariski
open subset of $A^\perp$ such that placing
$\v_i$ in this subset produces
an element in $\GOR_i$.
\end{lemma}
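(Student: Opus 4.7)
My plan is to split the two defining conditions of a GOR---orthogonality between non-neighbors, and general linear position---and show that each holds on a Zariski open subset of $A^\perp$.

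The orthogonality condition is automatic on all of $A^\perp$: since $A$ is by construction the span of those $\v_j$ (with $j<i$) corresponding to non-neighbors of vertex $i$ in $G_{i-1}$, any $\v_i \in A^\perp$ is orthogonal to each such $\v_j$. Combined with the assumption that $\v^{i-1}$ is already an OR of $G_{i-1}$, this makes $\v^i = (\v^{i-1}, \v_i)$ an OR of $G_i$ for \emph{every} choice of $\v_i \in A^\perp$.

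For general linear position of $\v^i$ in $\R^{n-d-1}$, I would use that $\v^{i-1}$ is already in general linear position, so the only $(n-d-1)$-element subsets of $\v^i$ that need checking are those containing $\v_i$. Such a subset has the form $S \cup \{\v_i\}$ with $S \subseteq \{\v_1,\dots,\v_{i-1}\}$ of size exactly $n-d-2$, and is linearly independent iff $\v_i \notin \mathrm{span}(S)$. The definition of extendability says precisely that $A^\perp \not\subseteq \mathrm{span}(S)$ for every such $S$, so $A^\perp \cap \mathrm{span}(S)$ is a proper linear (hence Zariski closed) subspace of $A^\perp$; in particular, taking $S = \emptyset$ also shows $A^\perp \ne \{0\}$, so non-zero choices of $\v_i$ exist. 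Taking the union over the finitely many $S$ of size $n-d-2$ produces a proper Zariski closed subset $B \subset A^\perp$, and any $\v_i \in A^\perp \setminus B$ produces an element of $\GOR_i$.

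I do not anticipate a real obstacle here; the proof is essentially a direct unpacking of the extendability definition together with the standard observation that a finite union of proper linear subspaces of $A^\perp$ is a proper Zariski closed subset. The one piece of bookkeeping to watch is matching the cardinality bound ``$\le n-d-2$'' in the definition of extendability to the size of the subsets of $\v^{i-1}$ that, together with $\v_i$, form the $(n-d-1)$-element subsets of $\v^i$ relevant for general linear position in $\R^{n-d-1}$.
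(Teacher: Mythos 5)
Your proof is correct and follows essentially the same approach as the paper's: the disallowed placements for $\v_i$ form the intersection of the linear space $A^\perp$ with a finite subspace arrangement spanned by subsets of $\v^{i-1}$, and extendability is exactly the hypothesis that guarantees this intersection is proper. The paper's version is terser---it simply notes that either $A^\perp$ is inside the arrangement or the bad set is lower dimensional, leaving the reader to observe that extendability (together with irreducibility of $A^\perp$, so that containment in a union of subspaces forces containment in one) rules out the first branch---whereas you unpack that step explicitly by quantifying over the subsets $S$; that is a genuine improvement in clarity. The one thing to tighten is the ``size exactly $n-d-2$'' phrasing you flag at the end: when $i-1 < n-d-2$ there are no such $S$, yet general position of $\v^i$ still requires $\v_i \notin \mathrm{span}(\v^{i-1})$; taking $S$ of size $\min(i-1,\,n-d-2)$ (or simply all $S$ of size $\le n-d-2$, since the spans are nested) handles both regimes uniformly and matches the cardinality bound in the definition of extendability.
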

\begin{proof}
The set of disallowed placements for $\v_i$ 
(violating general position)
is the intersection 
of the irreducible $A^\perp$ with a 
subspace arrangement arising from the 
linear spans of subsets of $\v^{i-1}$.  Either $A^\perp$
is contained in this arrangement, or the disallowed subset is 
algebraic and lower dimension.  
\end{proof}

\begin{proof}[Proof of Corollary \ref{cor:gor-dim}]
Let $2 \le i \le n$.
Let $\pi$ be the 
map $\pi:\GOR_{i}\to \GOR_{i-1}$ that forgets the last vertex.
From Lemma~\ref{lem:dom}, 
we have $\dim(\pi(\GOR_i))=\dim(\GOR_{i-1})$.

Due to irreducibility of $\GOR_i$ (Lemma~\ref{lem:gori}), we can apply 
the fiber dimension theorem
~\ref{thm:fiber} in the appendix to see that
\[
\dim(\GOR_i)= \dim(\pi(\GOR_i)) + \dim(\pi^{-1}(\pi(x)) \cap N(x))
\]
where $x$ is generic in $\GOR_i$ and $N(x)$ is a neighborhood around $x$.

Meanwhile, from Lemmas~\ref{lem:fullA}
and~\ref{lem:dimA}
any fiber
$\pi^{-1}(\pi(x))$ is a Zariski open subset of a linear space of dimension 
$n-d-i+e_i$. 
Thus the dimension of $\GOR_i$ is $n-d-i+e_i$ 
more than the dimension of 
$\GOR_{i-1}$.
Also the dimension of $\GOR_i$ for $i=1$ is 
$n-d-1=n-d-i+e_i$.

Summing over all $i$ gives 
\[
D_G = \sum_{i=1}^{n} n-d -i + e_i = n(n-d) - \binom{n+1}{2} + m
\]
as claimed.
\end{proof}

\subsection{Alfakih's construction}
Because of the orthogonality property of a GOR, its 
Gram matrix has the right zero/non-zero pattern 
to be a stress matrix.  Alfakih \cite{Alfakih-conn}
builds on this.  First we set some notation.
\begin{definition}\label{defn:gorc}
Let $G$ be a $(d+1)$-connected 
graph and $\v$ a GOR of $G$ in dimension 
$n - d -1$.  The $n\times (n - d - 1)$ matrix $X$
with the $\v_i$ as its rows is the \defn{configuration 
matrix} of $\v$.  We denote the Gram matrix $XX^t$ of 
$\v$ by $\Psi$.  Note that $\Psi$ is, by construction, PSD
and  has 
rank $n - d - 1$, (as $\v$ is in general position). 

A GOR $\v$ is called \defn{centered} if its 
barycenter is the origin.  We define 
$\GOR^0$ to be the semi-algebraic set of 
centered GORs.

The Gram matrix $\Psi$
is a stress matrix (which we will call $\Omega$) 
if and only if $\v$ is centered.  (Recall that the 
extra condition is that the all-ones vector is in the 
kernel.) Such an $\Omega$ is  PSD and of 
rank $n-d-1$.

We define the set $\LL$ of \defn{Lovász-Saks-Schrijver stresses}
to be the collection of stress matrices $\Omega$ arising 
as the Gram matrices of centered GORs.  Denote its
dimension by $D_L$.
\end{definition}
We wish to compute $D_L$.  
Heuristically, we expect the relationship
\begin{equation}\label{eq:gorc-dim}
	D_L + \binom{n - d - 1}{2} = D_G - n + d + 1
\end{equation}
to hold
because both sides correspond to the dimension 
of the set $\GOR^0$ of centered GORs.
In particular, given 
a stress matrix $\Omega\in \LL$, we can change
the underlying GOR $\v$ by an orthogonal transformation 
on $\RR^{n - d -1}$ without changing $\Omega$.  This 
corresponds to the left-hand side of \eqref{eq:gorc-dim}.
The
right-hand-side comes from noting that the centering 
condition imposes a linear constraint on each column
of the matrix $X$.  

This this does not constitute a proof because we don't yet know 
that the centering condition behaves transversely, 
which we need to prove
correctness of the predicted count.
Instead of checking this directly,
we use the construction from \cite{Alfakih-conn}.

\begin{definition}\label{def:gorc-scaling}
Let $\v$ be a vector configuration.  
A \defn{centering map} $\varphi$ is a map $\v_i\mapsto \alpha_i\v_i$
so that $\varphi(\v)$ has its barycenter at the origin.  A 
centering map is \defn{full rank} if none of the $\alpha_i$ are
zero.  The coefficients $\alpha_i$ defining $\varphi$ correspond
to a row vector in the co-kernel of the configuration matrix, $X$, of $\v$.

We define $\varphi(X)$ to be 
$DX$, where $D$ is the $n\times n$ diagonal matrix 
with the $\alpha_i$ on its diagonal.

Specialized to the case where $\v$ is a GOR for a graph $G$ in 
$\RR^{n-d-1}$, $\varphi(\v)$ is a centered GOR 
if and only if $\varphi$ is full rank, since general position 
must be maintained.  

Thus the set of full rank centering maps is the semi-algebraic set 
arising by removing vectors with any zero coordinates from the 
co-kernel of $X$.
\end{definition}

\begin{lemma}\label{lem:centering-map-dim}
Let $G$ be a $(d+1)$-connected graph and $\v$ a GOR of $G$ in 
 $\RR^{n-d-1}$.  If there is at least one full-rank centering 
map $\varphi$ of $\v$, then the set of all full rank centering 
maps is $(d+1)$-dimensional.
\end{lemma}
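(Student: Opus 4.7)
The plan is to reduce the statement to a straightforward dimension count on the co-kernel of the configuration matrix $X$.

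First I would unpack Definition~\ref{def:gorc-scaling}: a centering map $\varphi$ is encoded by a vector $\alpha = (\alpha_1,\dots,\alpha_n)$ satisfying $\alpha^T X = 0$, so the set of all centering maps (full-rank or not) is exactly the co-kernel $\ker(X^T) \subseteq \RR^n$. Since $\v$ is a GOR in general position, its configuration matrix $X$ (of size $n\times(n-d-1)$) has full column rank $n-d-1$, so this co-kernel is a linear subspace of $\RR^n$ of dimension exactly $n-(n-d-1) = d+1$.

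Next I would identify the set of full-rank centering maps inside this $(d+1)$-dimensional subspace. By definition $\varphi$ is full-rank iff none of the $\alpha_i$ vanish, so the full-rank centering maps form
\[
	\ker(X^T) \setminus \bigcup_{i=1}^n H_i,
\]
where $H_i = \{\alpha \in \RR^n : \alpha_i = 0\}$ is a coordinate hyperplane. Each $H_i \cap \ker(X^T)$ is a linear subspace of the $(d+1)$-dimensional space $\ker(X^T)$, so its dimension is either $d+1$ (if $\ker(X^T) \subseteq H_i$) or at most $d$.

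The hypothesis that at least one full-rank centering map exists is exactly the statement that some $\alpha^{\ast} \in \ker(X^T)$ avoids every $H_i$; in particular $\ker(X^T) \not\subseteq H_i$ for each $i$, so every intersection $H_i \cap \ker(X^T)$ is a proper subspace of dimension at most $d$. A finite union of proper linear subspaces of an irreducible $(d+1)$-dimensional linear space is a proper algebraic subset, so the complement is Zariski open, non-empty, and of dimension $d+1$. This shows the semi-algebraic set of full-rank centering maps has dimension $d+1$, as required. There is no real obstacle here beyond verifying that the co-kernel is genuinely $(d+1)$-dimensional, which is immediate from the general-position hypothesis that guarantees $\mathrm{rank}(X) = n-d-1$.
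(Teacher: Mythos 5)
Your proof is correct and follows essentially the same route as the paper: identify the centering maps with the co-kernel $K$ of $X$, note $\dim K = d+1$ from general position, and argue that the locus of vectors with a vanishing coordinate is a proper algebraic subset of the (irreducible) linear space $K$ once a full-rank map exists, so its complement has full dimension $d+1$. The only cosmetic difference is that you decompose the bad set explicitly as a union of hyperplane slices $H_i \cap K$, whereas the paper treats it as a single algebraic subset and invokes irreducibility in one step.
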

\begin{proof}
The co-kernel $K$ of $X$ is a linear space, and so irreducible.  The
subset of vectors in $K$ with any zero coordinate is an algebraic 
subset of it, and so is either all of $K$ or of lower dimension.
Since $K$ is assumed to contain a vector with no zeros, the set of 
full rank centering maps is then the (semi-algebraic) complement 
of a proper algebraic subset, so it has the same dimension as $K$.

Because $\v$ is in general position, the dimension of $K$ is $d+1$.
%\qed
\end{proof}
Alfakih's main result in \cite{Alfakih-conn} is the following.
\begin{theorem}[\cite{Alfakih-conn}]
\label{thm:alf}
Let $G$ be $(d+1)$-connected.  Then any GOR
in $\RR^{n-d-1}$ for $G$ has a full rank centering map.
This gives rise to stress matrix $\Omega$ in LSS.  Moreover, 
any framework $(G,\p)$ with $d$-dimensional affine span, that has 
$\Omega$ as an equilibrium stress matrix, must 
be in general position.  (And, consequently,
super stable.)
\end{theorem}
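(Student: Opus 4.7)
The statement bundles three claims: (a) every GOR $\v$ admits a full-rank centering map; (b) the resulting Gram matrix $\Omega$ is a valid LSS stress matrix; (c) whenever $(G,\p)$ has $d$-dimensional affine span and carries $\Omega$ as an equilibrium stress, $\p$ is automatically in general affine position. Super stability then follows immediately from the Alfakih--Ye theorem already cited. The plan is to dispose of (a)--(b) by short linear-algebra arguments and spend the real effort on (c).

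For (a), identify the set of centering maps with $K := \ker X^t$, which has dimension $d+1$ because general position of $\v$ forces $X$ to have full column rank. If no full-rank centering map existed, then $K \subseteq \bigcup_i \{y_i = 0\}$; since $K$ is an irreducible linear subspace it would lie in a single hyperplane $\{y_i = 0\}$, so $e_i \in K^\perp = \mathrm{col}(X)$. Writing $e_i = X a$ gives $\v_j \cdot a = 0$ for all $j \ne i$ while $\v_i \cdot a = 1$, but general position (together with $n-1 \ge n-d-1$) already says that the $\v_j$ with $j \ne i$ span $\RR^{n-d-1}$, forcing $a = 0$, a contradiction. Part (b) is a direct check: with $D := \mathrm{diag}(\alpha_i)$ and $\Omega := (DX)(DX)^t$, invertibility of $D$ gives $\Omega$ PSD of rank $n - d - 1$; the OR property $\v_i \cdot \v_j = 0$ on non-edges yields the correct zero pattern off the diagonal; and the centering identity $\sum_j \alpha_j \v_j = 0$ translates to $(DX)^t \mathbf{1} = 0$, hence $\Omega \mathbf{1} = 0$.

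For (c), assemble $M := [\p^{(1)} \; \cdots \; \p^{(d)} \; \mathbf{1}] \in \RR^{n \times (d+1)}$, where $\p^{(k)}$ denotes the column of $k$-th coordinates. Every column lies in $\ker \Omega = \ker(DX)^t$ (the $\p^{(k)}$ by the equilibrium stress condition, $\mathbf{1}$ by (b)), and the $d$-dimensional affine span hypothesis makes them linearly independent; since $\dim \ker \Omega = d+1$ they form a basis. General affine position of $\p$ is equivalent to every $(d+1)$-subset of rows of $M$ being linearly independent in $\RR^{d+1}$. If this fails on some subset $S$, there is $a \ne 0$ with $(Ma)_i = 0$ for $i \in S$, producing a nonzero $y := Ma \in \ker(DX)^t$ with at least $d+1$ zero coordinates. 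But vectors in $\ker(DX)^t$ are precisely the coefficient vectors of linear dependencies $\sum_i y_i(\alpha_i \v_i) = 0$, and the $\alpha_i \v_i$ inherit general position from $\v$ (nonzero diagonal scaling preserves linear independence of any $n-d-1$-subset). Any nonzero such dependency must therefore have support of size at least $n-d$, i.e.\ at most $d$ zero entries, contradicting the $\ge d+1$ zeros in $y$.

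The main obstacle is part (c): recognizing that failure of general affine position of $\p$ forces a sparse linear dependency among the rescaled GOR vectors, and that general position of $\v$, read matroidally, forbids such a sparse dependency. Once that translation is in place everything else is routine bookkeeping, and Alfakih--Ye delivers super stability.
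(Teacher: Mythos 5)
Since the paper only cites Alfakih's result \cite{Alfakih-conn} for this theorem rather than proving it, there is no internal proof to compare against; the relevant question is whether your reconstruction is correct, and it is. Your three-part decomposition and all the linear algebra check out: part (a) correctly reduces to the fact that a linear subspace over $\RR$ contained in the union of coordinate hyperplanes must lie in one of them, and then general position of the $\v_j$ with $j \ne i$ forces $a = 0$, a contradiction; part (b) is a direct computation that $\Omega = (DX)(DX)^t$ has the right zero pattern, is PSD of rank $n-d-1$, and kills $\mathbf{1}$; part (c) is the genuinely nontrivial step, and your translation of ``not in general affine position'' into the existence of a nonzero vector $y \in \ker (DX)^t$ with at least $d+1$ zero coordinates, which contradicts general position of the scaled rows $\alpha_i\v_i$ (any nontrivial dependency must have support $\ge n-d$), is exactly the right idea and matches Alfakih's approach. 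Two minor stylistic notes: in (a), ``irreducible'' is a heavier hammer than you need, since the elementary fact that a vector space over an infinite field is not a finite union of proper subspaces suffices; and in (c) you implicitly pass to coordinates in $\RR^d$ via an affine frame for $\langle \p \rangle$, which the paper explicitly licenses just before the definition of super stability, so you might flag that reduction for a self-contained write-up. The final appeal to the Alfakih--Ye theorem for super stability is exactly as the paper intends.
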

\begin{remark}
In Theorem \ref{thm:alf}, 
the existence of the full rank centering map relies crucially on the general position property of the
input GOR.
The existence of a general position kernel framework 
relies  
on the general position of the centered GOR 
obtained by scaling.  A centered OR that is not
in general position but still has rank $n-d-1$
has only non-general position frameworks in its kernel.
\end{remark}

\begin{corollary}
\label{cor:dl}
If $G$ is  $(d+1)$-connected, then $D_L=m-\binom{d+1}{2}$.
\end{corollary}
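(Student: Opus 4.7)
The plan is to compute $D_L$ by a double count: relating both $D_L$ and $D_G$ to the dimension of a single incidence set of (GOR, full-rank centering map) pairs.

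Let $E := \{(\v, \varphi) : \v \in \GOR, \varphi \text{ a full-rank centering map for } \v\}$. The projection $\pi_1 \colon E \to \GOR$ is surjective by Alfakih's Theorem~\ref{thm:alf}, and by Lemma~\ref{lem:centering-map-dim} each of its fibers is a Zariski-open subset of the $(d+1)$-dimensional co-kernel of the configuration matrix. Thus $E$ is a nonempty Zariski-open subset of a rank-$(d+1)$ linear bundle over the irreducible base $\GOR$, hence itself irreducible of dimension $D_G + (d+1)$.

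Now consider the map $\Phi \colon E \to \LL$ sending $(\v, \varphi)$ to the stress matrix $\varphi(X)\varphi(X)^t$ of the centered GOR $\varphi(\v)$. By definition of $\LL$ this is surjective. Its fiber over a fixed $\Omega \in \LL$ is parameterized by two independent pieces of data. First, a full-column-rank $Y \in \R^{n \times (n-d-1)}$ with $YY^t = \Omega$ and general-position rows: since $\Omega$ has rank $n-d-1$, the Gram factorizations form a single $O(n-d-1)$-orbit of dimension $\binom{n-d-1}{2}$, and general position is preserved under the orthogonal action. Second, a vector of nonzero scalars $(\alpha_1, \dots, \alpha_n) \in (\R^*)^n$: given $Y$ and $\alpha$, setting $\v_i := \alpha_i^{-1} Y_i$ and letting $\varphi$ have coefficients $\alpha$ yields a valid pair in $E$, since orthogonality and general position descend from $Y$ to $\v$ under nonzero scaling, and the centering $\sum \alpha_i \v_i = \sum Y_i = 0$ is automatic because $Y$ is centered (which in turn follows from $\Omega \mathbf{1} = 0$ and the full column rank of $Y$). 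This description is bijective, so every fiber has dimension $n + \binom{n-d-1}{2}$, and by the fiber dimension theorem applied to the irreducible domain $E$,
\[
D_L \;=\; \dim E - n - \binom{n-d-1}{2} \;=\; D_G - (n-d-1) - \binom{n-d-1}{2} \;=\; D_G - \binom{n-d}{2}.
\]

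Substituting $D_G = n(n-d) - \binom{n+1}{2} + m$ from Corollary~\ref{cor:gor-dim} and verifying the arithmetic identity $n(n-d) - \binom{n+1}{2} - \binom{n-d}{2} = -\binom{d+1}{2}$ yields $D_L = m - \binom{d+1}{2}$. The main technical point is showing that the two-parameter description exhausts the fibers of $\Phi$: this rests on the uniqueness of the Gram factorization up to orthogonal change of coordinates, together with the observation (implicit in Theorem~\ref{thm:alf}) that a full-rank factorization $YY^t = \Omega$ of a stress matrix is already centered, so no extra constraints slip in.
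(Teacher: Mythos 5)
Your proof is correct and follows essentially the same approach as the paper's: you build the same bundle $E$ of (GOR, full-rank centering map) pairs, compute its dimension as $D_G + d + 1$ via Lemmas~\ref{lem:centering-map-dim} and~\ref{lem:bundle0}, and then apply the fiber dimension theorem together with Corollary~\ref{cor:gor-dim}. The only organizational difference is that you compose the paper's two maps $E \to \GOR^0 \to \LL$ and analyze the fiber of the composite directly (identifying it, correctly, as the product of the $O(n-d-1)$-orbit of Gram factorizations and the $(\R^*)^n$ of vertex rescalings), whereas the paper computes $\dim \GOR^0 = D_G - n + d + 1$ as an explicit intermediate step and applies the fiber dimension theorem twice.
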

\begin{proof}
If we can establish \eqref{eq:gorc-dim}, then we have 
\ba
    D_L &=& D_G - \binom{n-d-1}{2} - n + d + 1 
    \\ &=& 
    m + n(n-d) - \binom{n+1}{2} - \binom{n-d-1}{2}
    - n + d + 1
    \\&=&
    m-\binom{d+1}{2}
\ea

Now we show \eqref{eq:gorc-dim} by computing the 
dimension of $\GOR^0$ two ways.

For the first way, 
we build a semi-algebraic bundle $B$ of points $(\v, \varphi)$,
where $\v$ is a GOR and $\varphi$ is full rank centering map for $\v$.
Combining Lemma \ref{lem:centering-map-dim} and Theorem \ref{thm:alf},
for each fixed $\v$ the set of $\varphi$ is $(d+1)$-dimensional.
Thinking of $B$ as a bundle $(X^t,\varphi)$, we may apply 
Lemma~\ref{lem:bundle0}, to get that 
$B$ is irreducible and of dimension $D_G + d + 1$.

Any two GORs $\v$ and $\v'$,
can be scaled to the 
same element of $\GOR^0$ iff 
all of their corresponding vectors $\v_i$ and $\v'_i$
share the same direction.
Thus, the natural 
map $B\to \GOR^0$ given by $(\v, \varphi)\mapsto \varphi(\v)$
has $n$-dimensional fibers.  This maps is also surjective since
$\GOR^0\subset \GOR$, so $\v\in \GOR^0$ is in the image by taking 
$\varphi$ to be the identity.  We may now apply 
Theorem \ref{thm:fiber} again to conclude that, $\GOR^0$ has dimension 
$D_G - n + d + 1$. We also see, that as the image of this polynomial map, 
$\GOR^0$ is irreducible.

For the second way, 
as discussed above, the 
map from $\GOR^0\to \LL$, given by
$X\mapsto XX^t$ is invariant under the orthogonal 
group, so its fibers are $\binom{n-d-1}{2}$-dimensional.
This map is, by definition, surjective, so 
by Theorem \ref{thm:fiber}
the dimension of $\GOR^0$
is $D_L + \binom{n-d-1}{2}$.
\end{proof}

\begin{remark}\label{rem:scalings}
In Alfakih's construction, if one starts with a fixed GOR
and  varies the full rank centering maps $\varphi$, 
the resulting $\Omega$ matrices will differ only through scaling.
Thus all of the $d$-dimensional frameworks in the kernels of these 
$\Omega$ must only differ through $d$-dimensional projective 
transforms~\cite{coning}.
\end{remark}
\begin{figure}[h]
     \begin{center}
          \includegraphics[width=0.15\textwidth]{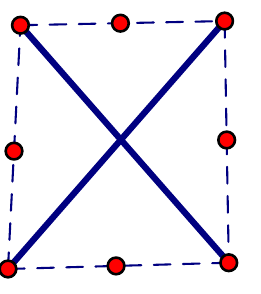}%
         \end{center}
     \caption{A framework with a maximal rank PSD equilibrium 
stress matrix that does not arise from Alfakih's construction.
The edge styles have the same meaning as in Figure \ref{fig:Exceptional}.}
     \label{fig:subdivided}
\end{figure}
\begin{remark}\label{rem:not-ss}
Since all the configurations produced by Alfakih's construction 
are in affine general position, and because it only applies to 
$(d+1)$-connected graphs, there are many frameworks 
with a maximal rank PSD equilibrium stress matrix that it does 
not construct.  For an example, see Figure \ref{fig:subdivided}.
\end{remark}
\begin{remark}
Alfakih's construction is an example of what the 
engineering literature calls ``form finding''
\cite{VB-form-finding}.  It is particularly similar 
to \cite{Connelly-Back}, in that it produces a configuration 
with specific properties by searching among configurations 
with a specific equilibrium stress.
\end{remark}

\section{Example: $K_{2,2}$ in $\RR^1$}
As a concrete example of Alfakih's construction, we consider 
the case of $K_{2,2}$ in $\RR^1$.  For convenience label the 
vertices $u_1,u_2,v_3,v_4$, so that the edges are $\{u_i,v_j\}$
for $i,j\in \{1,2\}$.  The universally rigid configurations
are those with one ``long edge'' and three ``short'' ones \cite{Jordan-Szabadka,Jacobs}.
We can explore these from the perspective of GORs for $K_{2,2}$ in 
dimension $\RR^2$ (since $n=4$ and $d=1$).  Denote a GOR of $K_{2,2}$ by $(\u_1,\u_2,\v_1,\v_2)$. 

The space of GORs is easy to describe: we have $\u_1\perp \u_2$, 
$\v_1\perp \v_2$, and the angle, $\theta$, between $\u_1$ and $\v_1$ is not 
a multiple of $\pi/2$.  This is clearly a $6$-dimensional set, 
in accordance with Theorem \ref{thm:lss}.

To explore the example more directly, we define a curve of
reference GORs parameterized by the angle $\theta$.
\[
\begin{array}{cccc}
    \u^\theta_1  := \ii e^{-\ii\theta/2}  &
    \u^\theta_2  := e^{-\ii\theta/2}
    & \v^\theta_1  := \ii e^{\ii\theta/2}
     & \v^\theta_2  := -e^{\ii\theta/2}
\end{array}
\]
where we have identified $\RR^2$ with $\CC$ to keep the formulas compact, 
and $\ii$ is the imaginary unit.

Following Alfakih's construction, we first scale our
reference curve of GORs to centered ones.  Noting the 
the reflection symmetry of this parameterization, we see that, for 
$\theta\in [0,\pi/2]$
\begin{eqnarray}
    \label{eq:scale}
    |\u^\theta_2 + \v^\theta_2 |\; (\u^\theta_1 + \v^\theta_1) +
    |\u^\theta_1 + \v^\theta_1 |\; (\u^\theta_2 + \v^\theta_2) & = & 0 
\end{eqnarray}
and
\begin{eqnarray}
    \label{eq:scale2}
    |\u^\theta_2 + \v^\theta_1|\;(\v^\theta_2 - \u^\theta_1) + 
    |\v^\theta_2 - \u^\theta_1|\; (\u^\theta_2 + \v^\theta_1) & = & 0
\end{eqnarray}
This  gives us  basis for scalings of a reference GOR to $\GOR^0$. 
These two scalings, along with $\theta$, parameterize 
$\LL$ for this interval, since there are three independent parameters.  
Thus, $K_{2,2}$ has  a $3$-dimensional space for $\LL$,
as expected from Corollary \ref{cor:dl}.

 \begin{figure}[ht]
     \begin{center}
           \includegraphics[width=0.6\textwidth]{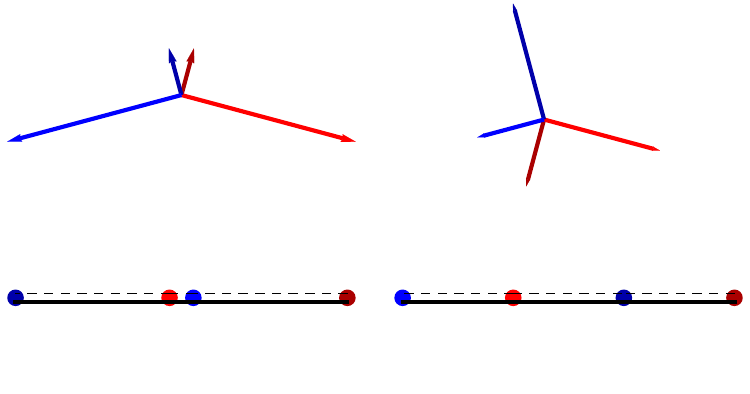}
     \end{center}
     \caption{Two centered GORs with $\theta=\pi/6$. Their kernel frameworks
are related by a  projective transformation.}
     \label{fig:scalings}
 \end{figure}
For a fixed $\theta$, applying a different scaling leads to a different 
point of $\GOR^0$.  The effect on the kernel framework, by 
Remark \ref{rem:scalings}, is to apply a projective 
transformation.  This is illustrated in Figure \ref{fig:scalings},
where the left column uses the scaling \eqref{eq:scale} and the 
right column uses \eqref{eq:scale2}.  
The color coding is: $u_1$ in dark red; $u_2$ in bright red; 
$v_1$ in dark blue; $v_2$ in bright blue.  
The top row shows two
centered GORs at $\theta=\pi/6$ and the bottom row the associated kernel 
frameworks. As is expected, in each case there is one red-blue
vector pair with positive dot product, corresponding to a negative
$\omega$ value on the long edge of that framework (represented
 with a thick line).
Edges with positive $\omega$ are shown with a dotted line and edges with 
zero $\omega$ are shown in thin green.

 \begin{figure}[ht]
     \begin{center}
         \subfloat[$\theta=0$]{
             \includegraphics[width=0.2\textwidth]{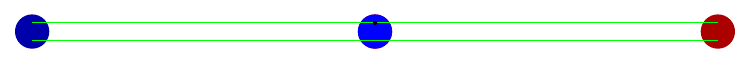}
         }
        \subfloat[$\pi/20$]{
            \includegraphics[width=0.2\textwidth]{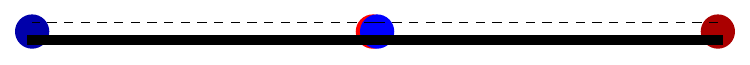}
         }
        \subfloat[$2\pi/20$]{
             \includegraphics[width=0.2\textwidth]{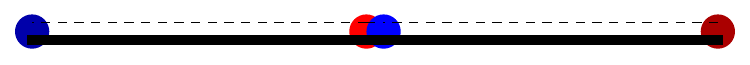}
         }
         \subfloat[$3\pi/20$]{
             \includegraphics[width=0.2\textwidth]{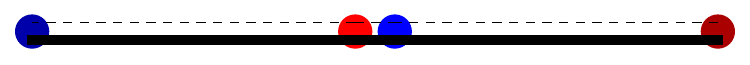}
         }
         \\
          \subfloat[$4\pi/20$]{
              \includegraphics[width=0.2\textwidth]{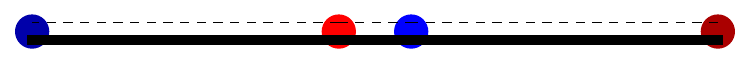}
          }
         \subfloat[$5\pi/20$]{
             \includegraphics[width=0.2\textwidth]{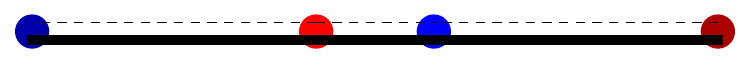}
          }
         \subfloat[$6\pi/20$]{
              \includegraphics[width=0.2\textwidth]{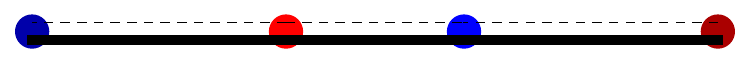}
          }
          \subfloat[$7\pi/20$]{
              \includegraphics[width=0.2\textwidth]{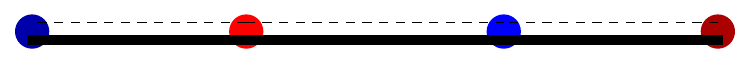}
          }
          \\
           \subfloat[$8\pi/20$]{
               \includegraphics[width=0.2\textwidth]{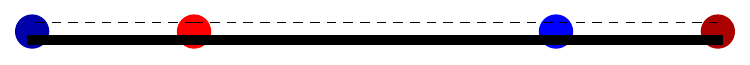}
           }
          \subfloat[$9\pi/20$]{
              \includegraphics[width=0.2\textwidth]{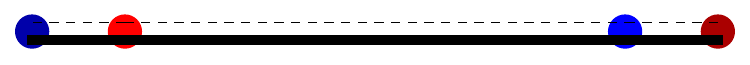}
           }
          \subfloat[$10\pi/20$]{
               \includegraphics[width=0.2\textwidth]{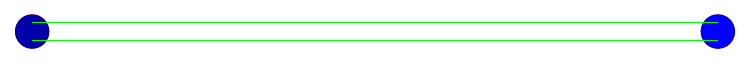}
           }
           \subfloat[$11\pi/20$]{
               \includegraphics[width=0.2\textwidth]{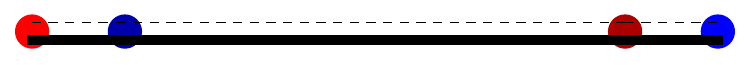}
           }
           \\
            \subfloat[$12\pi/20$]{
                \includegraphics[width=0.2\textwidth]{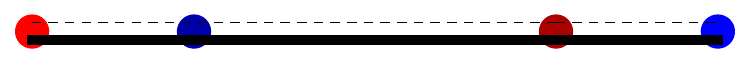}
            }
           \subfloat[$13\pi/20$]{
               \includegraphics[width=0.2\textwidth]{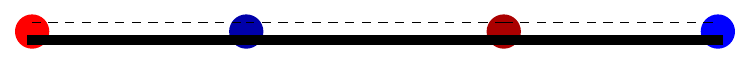}
            }
           \subfloat[$14\pi/20$]{
                \includegraphics[width=0.2\textwidth]{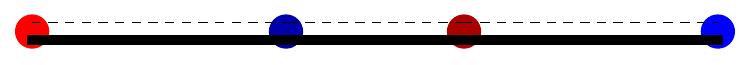}
            }
            \subfloat[$15\pi/20$]{
                \includegraphics[width=0.2\textwidth]{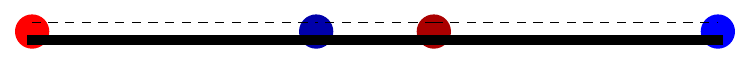}
            }
            \\
             \subfloat[$16\pi/20$]{
                 \includegraphics[width=0.2\textwidth]{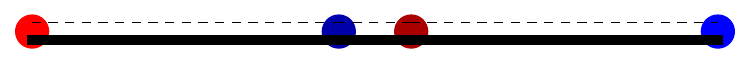}
             }
            \subfloat[$17\pi/20$]{
                \includegraphics[width=0.2\textwidth]{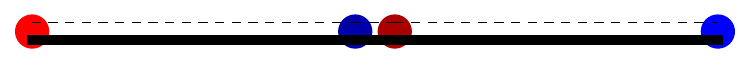}
             }
            \subfloat[$18\pi/20$]{
                 \includegraphics[width=0.2\textwidth]{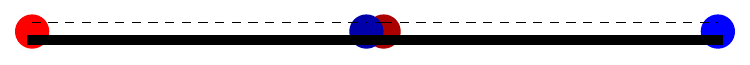}
             }
             \subfloat[$19\pi/20$]{
                 \includegraphics[width=0.2\textwidth]{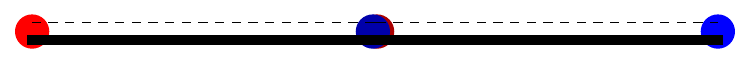}
             }
         \end{center}
     \caption{Universally rigid configurations of $K_{2,2}$ in $\RR^1$.}
     \label{fig:k22}
 \end{figure}

Figure \ref{fig:k22}
illustrates the sequence of kernel frameworks associated with the
scaling \eqref{eq:scale}, which holds on $[0,\pi]$,  
as $\theta$ increases from $0$ to $\pi$, using a 
consistent affine normalization.
At the endpoints of the 
interval, 
the ``GOR'' is no longer
in general position. Moreover, 
the ``centered GOR'' generated by this scaling
degenerates, yielding  a rank $1$ stress matrix;
in the figure, we show the kernel framework that is the limit 
as $\theta\to 0$ in part (a). 
The limit stress at $\theta=0$ has $\omega =1$ for the edge between 
bright red and bright blue and 
$\omega=0$ for the other edges.
In part (k), where $\theta = \pi/2$, the ``GOR'' is also no longer
in general position, but under the scaling, we still obtain a stress 
matrix of rank $2$.
This stress has
$\omega =1$ for the edges between dark blue and bright red, and 
between bright blue and dark red, and $\omega=0$ for the other edges.

%%%%%%%%%%%%%%%%
\section{Stratification of Stresses}
%%%%%%%%%%%%%%%%

We want to look at all equilibrium 
stresses for all $d$-dimensional frameworks of $G$, and see which ones 
correspond to those in $\LL$. 
We will do this by slicing the configuration space into 
subsets that are easy to analyze on their own. 
We will find that, when $G$ is generically globally rigid in $\RR^d$,
the equilibrium stresses arising
from infinitesimally flexible frameworks 
can account for only a low dimensional subset of 
$\LL$. This will then lead immediately to a proof
of our Theorem~\ref{thm:main}.

%%%%%%%%%%%%%%%%
\subsection{The Subsets}
%%%%%%%%%%%%%%%%

We now  chop up the configuration space, $\RR^{nd}$
into nicely behaved
subsets.
It will be helpful
for each of these subsets, $S$, to be invariant with respect to invertible
affine transforms.
That means that if 
$\p=(\p_1,\p_2..,\p_n) \in S$ 
and $A$ is an invertible affine transform, then 
 $A(\p):=(A(\p_1),A(\p_2)..,A(\p_n)) \in S$.

\begin{lemma}
\label{lem:dropinvar}
Let $S$ be a semi-algebraic set of configurations
that is invariant with respect  to 
invertible affine transforms. 
Let $r$ be the maximal rank of the rigidity matrices over all
$\p \in S$.
Let $S'$ be the semi-algebraic subset of $S$, where
the rigidity matrix has rank less than $r$. Then $S'$ is invariant with respect
to invertible affine transforms.
\end{lemma}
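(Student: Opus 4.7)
The plan is to reduce the lemma to showing that the rank of the rigidity matrix is invariant under any invertible affine transformation of a configuration. Once this rank invariance is in hand, the conclusion is automatic: if $\p \in S'$ and $A$ is an invertible affine map, then $A(\p) \in S$ by hypothesis on $S$, and the rigidity matrices of $\p$ and $A(\p)$ have the same rank, which is strictly less than $r$; hence $A(\p) \in S'$.

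To prove the rank invariance, I would first peel off translations. Any affine map has the form $A(\p) = M\p + t$ with $M$ an invertible $d\times d$ matrix, and translating a configuration by $t$ leaves all edge vectors $\p_i - \p_j$ unchanged, so $R(\p + t) = R(\p)$. It therefore suffices to handle the linear part $M$ alone.

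Next, for an invertible linear $M$ applied to every vertex, I would verify the identity
\[
    R(M\p) \;=\; R(\p)\,\bigl(I_n\otimes M^{T}\bigr),
\]
which follows by unpacking the definition of the rigidity matrix: the row of $R(M\p)$ indexed by an edge $\{i,j\}$ has block $M(\p_i - \p_j)$ in the column group for vertex $i$ and $-M(\p_i-\p_j)$ in that for vertex $j$, and each such entry is an $M$-linear combination of the entries $(\p_i-\p_j)_a$ already appearing in the corresponding row of $R(\p)$. Since $M$ is invertible, $I_n \otimes M^{T}$ is invertible, so right-multiplication by it preserves rank.

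I do not expect a real obstacle in this argument; the only step that requires any care is the bookkeeping of block indices and sign conventions when setting up the matrix identity above. With that identity established, the conclusion follows immediately from the chain $\p \in S' \Rightarrow \operatorname{rank} R(A\p) = \operatorname{rank} R(\p) < r$ together with $A(\p) \in S$, which together force $A(\p) \in S'$.
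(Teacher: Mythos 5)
Your proof is correct and takes the same approach as the paper: the paper's entire proof is the one-line observation that the rank of the rigidity matrix is invariant under invertible affine transformations of the configuration, and you simply supply the details (splitting off the translation part and the kernel-identity $R(M\p)=R(\p)(I_n\otimes M^{T})$) that the paper leaves implicit.
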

\begin{proof}
The rank of the rigidity matrix of $(G,\p)$ is invariant with respect to invertible
affine transforms acting on $\p$.
\end{proof}

\begin{lemma}
\label{lem:singinvar}
Let $S$ be a semi-algebraic set of configurations
that is invariant with respect to invertible affine 
transforms. Then its singular 
set is invariant with respect 
to invertible affine transforms.
\end{lemma}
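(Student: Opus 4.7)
The plan is to reduce the claim to the basic fact that the singular locus of a semi-algebraic set is invariant under any invertible polynomial self-map of the ambient space. The action of an invertible affine transformation $A$ of $\R^d$ on a configuration $\p = (\p_1, \dots, \p_n) \in \R^{nd}$, given by $A(\p) = (A(\p_1), \dots, A(\p_n))$, is itself an invertible affine-linear automorphism of $\R^{nd}$. In particular, it is a biregular isomorphism of $\R^{nd}$, smooth in both directions, and so in particular a diffeomorphism of $\R^{nd}$ to itself.

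The singular locus $\sing S$ of a semi-algebraic set $S$ is characterized by a local property of $S$ at each point: a point $\p \in S$ lies in $\sing S$ iff there is no open neighborhood of $\p$ in $\R^{nd}$ in which $S$ looks like a semi-algebraic manifold of dimension $\dim S$ (equivalently, one can use the Zariski tangent space characterization on the Zariski closure). Since the induced affine map $\Af\colon\R^{nd}\to\R^{nd}$ is a homeomorphism that restricts to a diffeomorphism between $S$ (by the assumed invariance) and its image, which is again $S$, it sends points where $S$ is locally a manifold of dimension $\dim S$ to points with the same property. Hence $\Af(\sing S) = \sing(\Af(S)) = \sing S$, which is exactly the statement that $\sing S$ is invariant under invertible affine transforms.

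The only thing to be careful about is pinning down the definition of the singular set of a semi-algebraic set, but whichever standard definition one chooses (local smoothness, dimension of the Zariski tangent space of the Zariski closure, the Whitney stratification's top-dimensional open stratum complement, etc.), it is a local property that is preserved by any $C^\infty$ diffeomorphism of the ambient space that preserves $S$ setwise. There is no real obstacle; the lemma is essentially a functoriality observation packaged for later use.
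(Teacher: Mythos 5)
Your argument is correct and is essentially the same as the paper's one-sentence proof: an invertible affine transform of $\RR^d$, applied coordinatewise, is a diffeomorphism of $\RR^{nd}$ that maps $S$ to itself, and since smoothness of a point of $S$ is a local property preserved by such diffeomorphisms, the singular locus is also preserved. You have just spelled out the definitional bookkeeping that the paper leaves implicit.
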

\begin{proof}
Each such affine transform gives us a diffeomorphism on $\RR^n$ and thus
retains smoothness of points in subsets.
\end{proof}

We thank Dylan P. Thurston for the proof of the following lemma.

\begin{lemma}
\label{lem:compinvar}
Let $S$ be a reducible semi-algebraic  set of configurations
that is invariant with respect to 
invertible affine transforms. Then each irreducible component 
$S_i$
of $S$ is invariant with respect to invertible affine transforms.
\end{lemma}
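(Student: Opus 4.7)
The plan is to work with real-algebraic Zariski closures and exploit the fact that the group $\mathrm{Aff}$ of invertible affine transformations of $\RR^d$, acting diagonally on $\RR^{nd}$, is irreducible as a real algebraic variety. Let $\overline{S}$ denote the Zariski closure of $S$ in $\RR^{nd}$ and let $\overline{S}=V_1\cup\cdots\cup V_k$ be its decomposition into real-algebraic irreducible components; the irreducible components of $S$ are then $S_i:=S\cap V_i$. Each $A\in\mathrm{Aff}$ is an algebraic automorphism of $\RR^{nd}$, and since $A(S)=S$ we get $A(\overline{S})=\overline{S}$, so $A$ permutes the list $V_1,\ldots,V_k$.

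Fix an index $i$ and let $H:=\{A\in\mathrm{Aff}:A(V_i)=V_i\}$ be the stabilizer. By orbit-stabilizer, $[\mathrm{Aff}:H]\le k$, so $H$ has finite index. I would next show $H$ is Zariski closed in $\mathrm{Aff}$: for each fixed $x\in V_i$ the evaluation $A\mapsto A(x)$ is polynomial, so $\{A:A(x)\in V_i\}$ is Zariski closed in $\mathrm{Aff}$; intersecting over $x\in V_i$ gives the locus $\{A:A(V_i)\subseteq V_i\}$. This locus coincides with $H$, because $A(V_i)$ is irreducible of the same dimension as $V_i$, and an irreducible subset of an irreducible set of its own dimension must equal it.

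Finally, $\mathrm{Aff}$ is the complement of the determinant hypersurface in $\RR^{d^2+d}$, hence a nonempty Zariski open subset of an irreducible space, so $\mathrm{Aff}$ is itself irreducible as a Zariski topological space. Writing $\mathrm{Aff}=\bigcup_j A_j H$ as the finite coset decomposition, each $A_j H$ is the image of the closed set $H$ under a polynomial automorphism of $\mathrm{Aff}$, so it is Zariski closed in $\mathrm{Aff}$. Since an irreducible topological space is not a finite union of proper closed subsets, some coset must equal $\mathrm{Aff}$, forcing $H=\mathrm{Aff}$. Thus $V_i$ is $\mathrm{Aff}$-invariant, and so is $S_i=S\cap V_i$ as an intersection of two invariant sets.

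The only real subtlety is to insist on the Zariski topology rather than the Euclidean one. Euclideanly, $\mathrm{Aff}$ has two connected components (split by the sign of the determinant), so a naive continuity/connectedness argument against the homomorphism $\mathrm{Aff}\to\mathrm{Sym}(k)$ induced by the permutation action would only control orientation-preserving maps. The algebraic-geometric framing, in which $\mathrm{Aff}$ is a single irreducible variety, is what lets us rule out any nontrivial permutation of components.
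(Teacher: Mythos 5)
Your proof is correct, but it takes a genuinely different route from the paper's. The paper's proof is more direct: it considers the polynomial map $g_i: S_i \times \AAA \to S$, $(\p, A) \mapsto A(\p)$ (where $\AAA$ is the set of invertible affine transforms), notes that the image lands in $S$ by the assumed invariance of $S$, and observes that the image is irreducible because the domain is a product of two irreducible semi-algebraic sets. Since the image contains $S_i$ (take $A = \mathrm{id}$) and $S_i$ is a maximal irreducible subset of $S$, the image must equal $S_i$, which is exactly the claimed invariance. Your argument instead passes to the irreducible components $V_i$ of the real Zariski closure of $S$, observes that $\mathrm{Aff}$ permutes them, shows the stabilizer $H$ of $V_i$ is a Zariski closed subgroup of finite index, and then uses irreducibility of $\mathrm{Aff}$ (no finite union of proper closed subsets can cover it) to force $H = \mathrm{Aff}$. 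Both proofs ultimately hinge on the same fact --- the group $\mathrm{Aff}$ is irreducible as a (semi-)algebraic set, so its action cannot "split" the picture nontrivially --- but the paper packages this as ``the image of a polynomial map from an irreducible domain is irreducible,'' while you package it as ``an irreducible group has no proper Zariski-closed subgroup of finite index.'' Your version has more moving parts (the step showing $\{A : A(V_i) \subseteq V_i\}$ is closed and equals $H$, the coset decomposition, the dimension argument to rule out $A(V_i) \subsetneq V_i$), but it is entirely sound, and your closing remark correctly identifies why one must work with Zariski rather than Euclidean connectedness: $\mathrm{Aff}$ has two Euclidean components but is a single Zariski-irreducible piece. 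The paper's argument bypasses that pitfall automatically since it never invokes the group structure beyond the existence of the identity element.
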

\begin{proof}
Let $\AAA$ be the set of invertible affine transforms;
this is  an irreducible semi-algebraic set.
Let $g_i$ be  the map 
$S_i \times \AAA \to S$, which
sends $(\p;A):=(\p_1,\p_2..,\p_n;A)$ to $A(\p):=(A(\p_1),A(\p_2)..,A(\p_n))$.
Since its domain is irreducible,
the image of the polynomial map, $g_i$,
must be an irreducible semi-algebraic set.
Moreover, since the identity map is in $\AAA$, the image of $g_i$ must
contain $S_i$. 
But since a component, by definition, must be maximal,
$S_i$ cannot be a strict subset of this irreducible image. Thus it agrees
with it. Thus $S_i$ is invariant with respect to invertible 
affine transforms.
\end{proof}

Now we describe how we slice up the configuration space.

\begin{definition}
Let $G$ be a graph.
We will consider a framework $(G,\p)$
as a single point in the configuration space, $\RR^{nd}$.
Let \defn{$\IR$} be the set of infinitesimally rigid frameworks of $G$ in $\RR^d$.

Let \defn{$\IF$} be the infinitesimally flexible frameworks, with a 
$d$-dimensional affine span. 
\end{definition}

\begin{lemma}
\label{lem:IR}
The set IR  is a smooth irreducible semi-algebraic 
set, invariant with respect to 
invertible affine
transforms.
\end{lemma}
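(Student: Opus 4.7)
The plan is to exhibit $\IR$ as a Zariski-open subset of $\RR^{nd}$; smoothness, irreducibility, and affine invariance will then all follow essentially for free.

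Set $r := nd - \binom{d+1}{2}$. Whenever $\p$ affinely spans $\RR^d$, the full $\binom{d+1}{2}$-dimensional space of trivial infinitesimal isometries injects into the kernel of $R(\p)$, so $\mathrm{rank}\,R(\p)\le r$ on that locus, with equality exactly when $(G,\p)$ is infinitesimally rigid. So
\[
\IR = \bigl\{\p\in\RR^{nd}:\p\text{ affinely spans }\RR^d\bigr\}\;\cap\;\bigl\{\p:\text{some }r\times r\text{ minor of }R(\p)\text{ is nonzero}\bigr\},
\]
which is a finite intersection of Zariski-open subsets of $\RR^{nd}$: the spanning locus is cut out by the non-vanishing of some $d\times d$ minor of the matrix with rows $\p_i-\p_1$, and the rank condition is manifestly Zariski-open.

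From this description the three claims follow immediately. Smoothness: a Zariski-open subset of $\RR^{nd}$ is in particular Euclidean-open in a smooth manifold, hence smooth. Irreducibility: $\RR^{nd}$ is irreducible, and any nonempty Zariski-open subset of an irreducible set is irreducible (and if $\IR$ happens to be empty, which only occurs when $G$ is not generically rigid, the claim is vacuous). Affine invariance: apply Lemma \ref{lem:dropinvar} with $S$ equal to the (affine-invariant) spanning locus, on which the maximum rigidity-matrix rank is $r$; the lemma gives affine-invariance of the low-rank subset $S'$, and hence of its complement $\IR = S\setminus S'$ inside the spanning locus.

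There is essentially no obstacle here: the lemma is the bookkeeping observation that infinitesimal rigidity is a maximum-rank condition on the spanning locus, hence Zariski-open. The only mild care is to include the spanning constraint so that $\mathrm{rank}\,R(\p)\ge r$ genuinely characterizes infinitesimal rigidity, rather than being spuriously satisfied by low-dimensional configurations where some trivial motions act as zero and thus the naive rank upper bound $r$ can fail.
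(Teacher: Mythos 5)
Your proof is correct and follows the same line as the paper: $\IR$ is (when nonempty) a Zariski-open subset of $\RR^{nd}$, hence smooth and irreducible, and affine invariance comes from Lemma~\ref{lem:dropinvar}. One minor inaccuracy in your final remark: the spanning constraint is in fact redundant, since a configuration $\p$ with affine span of dimension $k<d$ has $\operatorname{rank}R(\p)\le nk-\binom{k+1}{2}<nd-\binom{d+1}{2}$ for $n\ge d+1$, so the rank condition alone already forces spanning rather than ``spuriously'' being satisfied there; including the spanning locus in the intersection is harmless, though.
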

\begin{proof}
If $G$ is generically infinitesimal rigid in $\RR^d$, then
$\IR$ is a Zariski open subset of configuration space.
Otherwise it is empty. 
Thus $\IR$  is a smooth semi-algebraic set.
From Lemma~\ref{lem:dropinvar} we see that 
IR is invariant with respect to 
invertible affine
transforms.
\end{proof}

Next we split up $\IF$ into nicely behaved smaller sets.

\paragraph{Splitting by rank:}
We take $\IF$ and we partition it using the rigidity matrix rank of each 
$\p$ in $\IF$. 

In particular we start with $\IF$,
which is a semi-algebraic set. 
Let $r$ be the maximum rank of
all of the rigidity matrices in $\IF$. 
Let $\IF'$ be the set of configurations in $\IF$ with 
rigidity matrices of rank $< r$.
We partition 
$\IF = \IF' \cup W$ where 
$W:= (\IF - \IF')$.
As rank dropping can be expressed as an algebraic equality condition, 
both $\IF'$ and $W$ are semi-algebraic sets.
All of the frameworks in $W$ have rigidity matrices of the fixed rank $r$.
From Lemma~\ref{lem:dropinvar} we see that  $\IF'$ and thus also 
$W$ are invariant with respect to 
invertible affine
transforms.

We can then apply the above splitting recursively on $\IF'$.
When this terminates (by descent on $r$), collecting all of the resulting $W$, 
we have partitioned $\IF$ 
a finite set of semi-algebraic subsets
$\{A_1..A_k\}$ for some $k$.
Each 
$A_i$ is
invariant with respect to 
invertible affine
transforms.

\paragraph{Singularity splitting step:}

Given a
semi-algebraic 
set $A$ of dimension $s$, the set can be partitioned,
semi-algebraically, as 
$A = W \cup A'$ where $A':= \sing(A)$ and $W := A-A'$.

By construction, $W$ must be smooth and of dimension $s$.
From Lemma~\ref{lem:singStrat}
the  dimension of $A'$ must be strictly less than $s$.
From Lemma~\ref{lem:singinvar}, if $A$ is invariant
to affine transforms, then so too is $A'$ and thus also $W$.

We can recursively apply this procedure to $A'$.
By descent on $s$,
this process must terminate, giving us a collection of $W$-sets.

Applying this recursive splitting over all of the $A_i$ from the 
singularity splitting step, we collect all of the $W$-sets
to obtain a set of semi-algebraic sets we call
$\{B_1..B_l\}$ for some $l\in \mathbb{N}$.
Each $B$ in 
the resulting collection  must be smooth 
and affine invariant.

\paragraph{Component splitting step:}

Given a semi-algebraic  set $B$, the set can be written uniquely
as the 
finite union of a set
of semi-algebraic irreducible components.
From Lemma~\ref{lem:compinvar}, if $B$ is invariant
to affine transforms, then so too is each of its components.
If $B$ is smooth, then from Lemma~\ref{lem:compSmooth}, so too are 
each of its components.

We collect all of the components over all of the $B_i$ from the 
singularity splitting step
to obtain a set of semi-algebraic sets we call
$\{\IF_1..\IF_m\}$ for some $m$.

\bs{

------

\paragraph{Component splitting step:}

Given a semi-algebraic  set $A$, the set can be written uniquely
as the 
finite union of a set
of semi-algebraic irreducible components
$\{B_1..B_k\}$. From Lemma~\ref{lem:compinvar}, if $A$ is invariant
to affine transforms, then so too is each $B_i$.
If $A$ has dimension $r$, then each $B_i$ has dimension $\le r$.

\paragraph{Singularity splitting step:}

Given an irreducible
semi-algebraic 
set $B$ of dimension $r$, the set can be partitioned,
semi-algebraically, as 
$B = W \cup A'$ where $A':= \sing(B)$ and $W := B-A'$.

By construction, $W$ must be smooth and of dimension $r$.
From Lemma~\ref{lem:smoothIrr}, $W$ is irreducible.
From Lemma~\ref{lem:singStrat}
the  dimension of $A'$ must be strictly less than $r$.
Note that $A'$ might be reducible.
From Lemma~\ref{lem:singinvar}, if $B$ is invariant
to affine transforms, then so too is $A'$ and thus also $W$.

\paragraph{Ping pong Splitting}

We will start with the "active" set $\{A_1...A_k\}$ for some $k$
resulting from the rank splitting 
step applied to $\IF$. Let the maximum dimension over the $A_i$ be $r$.
We iterate a component splitting step over all of the $A_i$. 
This results 
in a finite set 
$\{B_1..B_{l}\}$, for some $l$, where each $B_i$ is
semi-algebraic, irreducible, affine-invariant, and with dimension $\le r$.

We then iterate a singularity splitting step over each of these $B_i$.
For each $B_i$ this results in 
\begin{itemize}
\item $W$ which is smooth, 
semi-algebraic, irreducible and affine-invariant. This $W$ is saved for output.
\item $A'$ which is  
semi-algebraic, and affine-invariant, and of dimension less than $r$.
\end{itemize}

Collecting all of these $A'$ as we iterate over the $B_i$, we obtain a new
active set 
$\{A'_1...A'_{k'}\}$ for some $k'$ with maximum dimension $r' < r$.

We can then recursively apply this process to 
$\{A'_1...A'_k\}$ 
which must terminated by descending on $r$.

We collect all of the output $W$ into a set of semi-algebraic sets we call
$\{\IF_1..\IF_m\}$ for some $m$.

------------
}

We summarize the conclusion of this discussion as follows
\begin{lemma}
$\IF$ can be written as the union of a finite set of semi-algebraic sets
$\{\IF_1..\IF_m\}$ for some $m$, where each $\IF_i$ is  smooth, irreducible
and affine-invariant. 
Each $\IF_i$ is defined over a finite extension of $\QQ$.
All configurations in one $\IF_i$ share their rigidity matrix rank.
\end{lemma}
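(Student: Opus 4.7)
The plan is to combine the three splitting procedures (rank, singularity, component) already described immediately above the statement, verifying that the claimed properties propagate through each stage. I would first apply the \textbf{rank splitting step} recursively starting from $\IF$, obtaining a finite collection $\{A_1, \dots, A_k\}$ of semi-algebraic sets, each affine-invariant by Lemma~\ref{lem:dropinvar}, whose union is $\IF$, and on which the rigidity matrix has constant rank. The recursion terminates because each step strictly decreases the maximal rank and the rank is bounded above.

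Next, for each $A_i$ I would apply the \textbf{singularity splitting step} recursively: peel off the smooth locus as a piece $W$, then recurse into $\sing(A_i)$. Each iteration strictly drops the dimension (Lemma~\ref{lem:singStrat}), so the recursion terminates. By Lemma~\ref{lem:singinvar}, affine-invariance is preserved, and since each output $W$ is a subset of some $A_i$ on which the rigidity rank was already constant, rank-constancy is also preserved. This yields a finite collection $\{B_1, \dots, B_l\}$ of smooth, affine-invariant, semi-algebraic sets of constant rigidity rank whose union is $\IF$.

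Finally, I would apply the \textbf{component splitting step} to each $B_j$, writing it as a finite union of its irreducible components. By Lemma~\ref{lem:compinvar} each component is affine-invariant; by Lemma~\ref{lem:compSmooth}, since $B_j$ is smooth, each component is smooth. Constancy of rigidity matrix rank obviously descends to subsets. Collecting the components across all $B_j$ gives the claimed finite family $\{\IF_1, \dots, \IF_m\}$.

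For the field-of-definition claim, at each stage the operations used---imposing determinantal rank inequalities, taking singular loci, and decomposing into irreducible components---are all defined over $\QQ$ applied to sets already defined over some finite extension of $\QQ$ (starting from $\IF$, which is cut out by polynomial equations and inequalities with rational coefficients). Irreducible decomposition may require passing to a finite extension to split components that are conjugate over $\QQ$, but this keeps us within finite extensions of $\QQ$. The main conceptual point to check carefully is that rank-constancy indeed survives both the singularity and component splittings, but this is automatic because the later operations only remove points or pass to subsets of the rank-constant pieces produced by the initial rank splitting.
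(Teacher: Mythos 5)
Your proposal is correct and takes essentially the same approach as the paper: the lemma is stated as a summary of the three-stage decomposition (rank splitting, then singularity splitting, then component splitting) described immediately beforehand, and you have reconstructed that exact sequence, tracking affine-invariance via Lemmas~\ref{lem:dropinvar}, \ref{lem:singinvar}, \ref{lem:compinvar}, smoothness via Lemmas~\ref{lem:singStrat} and~\ref{lem:compSmooth}, and the field of definition via the fact that irreducible components of $\QQ$-defined sets are defined over finite extensions of $\QQ$.
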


\begin{definition}
Let \defn{$C_i$} be the codimension of $\IF_i$ within the 
$nd$-dimensional set of configurations.

Let \defn{$F'_i$} be the dimension,
for each framework in 
$\IF_i$,
of its space of infinitesimal  
flexes.
Let \defn{$F_i := F'_i -\binom{d+1}{2}$},
which discounts the dimension of the trivial infinitesimal flexes.
\end{definition}

\begin{definition}
Let \defn{$\ST(\IR)$} and \defn{$\ST(\IF_i)$} be the union of the equilibrium 
stress matrices over its framework
set. 
\end{definition}

\begin{lemma}
The set
$\ST(\IR)$ and each $\ST(\IF_i)$ is semi-algebraic.
\end{lemma}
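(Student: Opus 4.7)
The plan is to express each of the sets $\ST(\IR)$ and $\ST(\IF_i)$ as the image of a semi-algebraic set under a polynomial projection, and then invoke the Tarski--Seidenberg theorem (quoted in the appendix) to conclude that the image is semi-algebraic.

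First, I would set up the incidence set. Let $S$ be either $\IR$ or one of the $\IF_i$. Both are semi-algebraic by construction (Lemma \ref{lem:IR} for $\IR$, and the explicit splitting procedure for $\IF_i$). Consider the product $S \times \mathrm{Sym}_n(\RR)$, where $\mathrm{Sym}_n(\RR)$ denotes the space of symmetric $n \times n$ matrices (identified with $\RR^{\binom{n+1}{2}}$). Inside this product, define
\begin{equation*}
E(S) \;=\; \bigl\{ (\p, \Omega) \in S \times \mathrm{Sym}_n(\RR) \;:\; \Omega \text{ is an equilibrium stress matrix for } (G,\p) \bigr\}.
\end{equation*}
The conditions on $\Omega$ are (i) $\Omega_{ij} = 0$ whenever $\{i,j\}$ is not an edge of $G$ and $i \ne j$, (ii) the row sums of $\Omega$ vanish, and (iii) the equilibrium equation $\sum_j \omega_{ij}(\p_i - \p_j) = 0$ holds at every vertex $i$. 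Each of these is either a linear equation in $\Omega$ or a polynomial equation bilinear in $(\p,\Omega)$, so they cut out an algebraic subset of $S \times \mathrm{Sym}_n(\RR)$. Intersecting with the semi-algebraic set $S \times \mathrm{Sym}_n(\RR)$, we see that $E(S)$ is semi-algebraic.

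Next, let $\pi : S \times \mathrm{Sym}_n(\RR) \to \mathrm{Sym}_n(\RR)$ be the projection onto the second factor. By definition,
\begin{equation*}
\ST(S) \;=\; \pi\bigl(E(S)\bigr).
\end{equation*}
Since $\pi$ is a polynomial map and $E(S)$ is semi-algebraic, the Tarski--Seidenberg theorem guarantees that $\pi(E(S))$ is semi-algebraic. Applying this with $S = \IR$ and then with $S = \IF_i$ for each $i$ gives the desired conclusion.

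There is no substantive obstacle here; the only thing to verify is that the equilibrium stress condition is expressible by polynomial equations in $(\p, \Omega)$, which is immediate from \eqref{eqn:equilibrium} and the definition of the stress matrix. The rest is a direct application of the Tarski--Seidenberg projection principle to a semi-algebraic incidence variety.
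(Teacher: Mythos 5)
Your argument is exactly what the paper's one-line proof ("this follows immediately using quantifier elimination") is gesturing at: build the semi-algebraic incidence set of pairs $(\p,\Omega)$ and project to the stress factor, invoking Tarski--Seidenberg. Correct, and the same approach, just spelled out.
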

\begin{proof}
This follows immediately using quantifier elimination.
\end{proof}

\begin{remark}
In the above decomposition, the property of constant rank rigidity matrices
will be used throughout our reasoning.

The invariance to invertible affine transforms will be needed in 
Lemmas~\ref{lem:RisD}
and~\ref{lem:CgreaterF}, where we need to carefully count the dimension
of $\ST(\IR)$ and the $\ST(\IF_i)$.

The irreducibility of each $\IF_i$
will be important in Section~\ref{sec:tang}, where we want 
there to be a well defined notion of a generic point, and thus a 
well defined generic dimension of tangential flexes. 

The smoothness of each $\IF_i$ 
will be convenient throughout, but will be especially needed
in Lemma~\ref{lem:ClessF}, where we will want the generic frameworks of 
$\IF_i$ to be dense in $\IF_i$.
(A non smooth real algebraic set, such as the Whitney umbrella can have a locus
of singular points, such as the handle of the umbrella, that have no nearby 
smooth points.)
\end{remark}

The following Lemma is not needed for the proof of our theorem, but
is useful in setting up the proof of
Lemma \ref{lem:CgreaterF} below.

\begin{lemma}
\label{lem:RisD}
Let $G$ be generically globally rigid in $\RR^d$. Then
$\dim(\ST(\IR))= m-\binom{d+1}{2}=D_L$.
\end{lemma}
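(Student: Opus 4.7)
The plan is to apply the fiber dimension theorem to a natural incidence bundle over $\IR$, with the GGR hypothesis used only to pin down the maximal rank of stress matrices that appear. First I would form
\[
B := \{(\p, \Omega) : \p \in \IR,\ \Omega \text{ is an equilibrium stress matrix of } (G,\p)\}.
\]
Since $G$ is GGR it is generically infinitesimally rigid, so $\IR$ is a non-empty Zariski-open subset of $\R^{nd}$, irreducible of dimension $nd$. On $\IR$ the rigidity matrix has constant rank $nd - \binom{d+1}{2}$, so the cokernel has constant dimension $m - nd + \binom{d+1}{2}$, making $B$ an irreducible vector bundle over $\IR$ with $\dim B = m + \binom{d+1}{2}$.

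Next I would analyze the projection $\pi : B \to \ST(\IR)$, $(\p,\Omega)\mapsto \Omega$. Since any $\p \in \IR$ has $d$-dimensional affine span, its $d$ coordinate vectors together with $\mathbf 1$ span a $(d+1)$-dimensional subspace of $\ker\Omega$, so the rank of $\Omega$ is at most $n-d-1$. Conversely, for $\Omega\in\ST(\IR)$ of rank exactly $n-d-1$, the fiber $\pi^{-1}(\Omega)$ consists of configurations in $\IR$ whose coordinates all lie in the $(d+1)$-dimensional subspace $\ker\Omega$; this is a Zariski-open subset of a $d(d+1)$-dimensional linear subspace of $\R^{nd}$, so has dimension $d(d+1)$. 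To conclude that the \emph{generic} fiber of $\pi$ has this dimension, I would invoke Theorem~\ref{thm:necc} (contrapositive): the GGR hypothesis guarantees that a generic $\p \in \IR$ admits an equilibrium stress matrix of rank exactly $n-d-1$, and since the stress space over $\p$ is linear (hence irreducible), the generic $(\p,\Omega)\in B$ does indeed map to a rank $n-d-1$ matrix.

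The fiber dimension theorem (Theorem~\ref{thm:fiber}) applied to $\pi$ then yields
\[
\dim \ST(\IR) = \dim B - d(d+1) = m + \binom{d+1}{2} - d(d+1) = m - \binom{d+1}{2},
\]
which equals $D_L$ by Corollary~\ref{cor:dl}. The hard part is ensuring that the generic point of $\ST(\IR)$ has rank exactly $n-d-1$ rather than dropping further; this is precisely where GGR is used. Without that assumption, one could have strictly stronger rank-drop conditions on $\Omega$ forcing the generic fiber of $\pi$ to be larger and hence $\dim\ST(\IR)$ smaller.
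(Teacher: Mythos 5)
Your proposal follows essentially the same route as the paper: build the incidence bundle $B$ over $\IR$, compute $\dim B = m + \binom{d+1}{2}$ from the constant rank of the rigidity matrix, use Theorem~\ref{thm:necc} plus genericity to show a generic $\Omega$ in the image has rank exactly $n-d-1$, determine the generic fiber of the projection to be $d(d+1)$-dimensional, and apply the fiber dimension theorem. The paper phrases the generic fiber as ``affine transforms of $\p$'' while you phrase it as a Zariski-open subset of the $d(d+1)$-dimensional linear space of configurations with coordinates in $\ker\Omega$, but these are the same thing; the arguments are in lockstep.
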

\begin{proof}
We want to count the total dimension of equilibrium stresses
over all frameworks in $\IR$, 
but we need to be careful not to double count.

From Theorem~\ref{thm:glr} the dimension of IR is $nd$.

The rank of the rigidity matrix of any framework $(G,\p)$ in IR is 
$nd-\binom{d+1}{2}$, and so the dimension of equilibrium stresses for
this single framework is $m-nd+\binom{d+1}{2}$.

Let the  equilibrium stress bundle of IR, a subset of 
$\RR^{nd} \times \RR^m$, consist of pairs 
$(\p,\omega)$ where $(G,\p) \in \IR$ and $\omega$ is an equilibrium
stress of $(G,\p)$. This is a vector bundle over $\IR$.
The projection, $\pi_2$, of this bundle  onto its second factor, gives us
$\ST(\IR)$.

From Lemma~\ref{lem:bundle1}
the bundle is irreducible and has dimension 
$nd+
[m-nd+\binom{d+1}{2}] = m+\binom{d+1}{2}$.

Let us now look at one
$(\p,\Omega)$, some generic point of 
the bundle.
From Lemma~\ref{lem:image-generic},
the configuration $\p$ must be a generic configuration.
Since $G$ is generically globally rigid, then from
Theorem~\ref{thm:necc}, the generic framework 
$(G,\p)$ must have an equilibrium
stress matrix of rank $n-d-1$. Thus, from genericity,
$\Omega$ must achieve this rank.

Now we look at $\pi_2$, in the  neighborhood 
around
$(\p,\Omega)$. 
Since $\Omega$ has rank $n-d-1$,
the fiber of $\pi_2$
consists of affine transforms of a $\p$
and must have dimension $d(d+1)$. So from Theorem~\ref{thm:fiber},
the dimension of the image must be 
$m+\binom{d+1}{2} - d(d+1)  = m-\binom{d+1}{2}$.

From Theorem~\ref{thm:hen}, $G$ must be $(d+1)$-connected and so
from Corollary~\ref{cor:dl} this dimension agrees with $D_L$.
%\qed
\end{proof}
\begin{remark}
When $G$ is not
generically globally rigid in $\RR^d$, then all of the 
stresses in $\ST(\IR)$ must be of rank less than $n-d-1$, and thus the
fibers under $\pi_2$ are larger 
and thus
$\dim(\ST(\IR))
< m-\binom{d+1}{2}$.
\end{remark}

\subsection{The $\IF_i$ with few infinitesimal flexes can
only account for low dimensional subsets of $\LL$}

\begin{lemma}
\label{lem:CgreaterF}
Let $G$ be any graph.
Then for any $i$ such that
$C_i > F_i$,
we have $\dim(\ST(\IF_i)) < 
 m-\binom{d+1}{2}$.
Thus, when $G$ is $(d+1)$-connected, then 
$\dim(\ST(\IF_i)) < D_L$.
\end{lemma}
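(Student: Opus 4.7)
The plan is to mirror the stress-bundle argument used in the proof of Lemma~\ref{lem:RisD}, but now upgraded with a lower bound on fiber dimensions coming from the affine invariance of $\IF_i$. Concretely, I would form the equilibrium stress bundle
\[
    B_i = \{(\p,\omega) \in \RR^{nd}\times \RR^m : \p \in \IF_i,\ \omega \text{ is an equilibrium stress of }(G,\p)\},
\]
which is a vector bundle over the smooth irreducible set $\IF_i$. By construction of $\IF_i$, the rigidity matrix has constant rank $r_i = nd-\binom{d+1}{2}-F_i$ on $\IF_i$, so the fiber of this bundle has constant dimension $m-r_i$. Applying Lemma~\ref{lem:bundle1} (as in Lemma~\ref{lem:RisD}), $B_i$ is irreducible with
\[
    \dim(B_i) = (nd-C_i) + \bigl(m-nd+\tbinom{d+1}{2}+F_i\bigr) = m + \tbinom{d+1}{2} + F_i - C_i.
\]

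Next I would analyze the projection $\pi_2\colon B_i \to \ST(\IF_i)$, which is surjective by definition. The key point is that every fiber contains an affine orbit of dimension $d(d+1) = 2\binom{d+1}{2}$. Indeed, for any invertible affine map $A(\x)=M\x+b$ and any $(\p,\omega)\in B_i$, the transformed configuration $A(\p)$ still lies in $\IF_i$ by affine invariance, and the $j$-th coordinate vector of $A(\p)$ is a linear combination of the coordinate vectors of $\p$ plus a multiple of $\mathbf{1}$; since $\Omega$ annihilates all of these, $\omega$ is still an equilibrium stress of $(G,A(\p))$. Because every $\p\in \IF_i$ has a $d$-dimensional affine span, the affine group acts freely, so this orbit inside $\pi_2^{-1}(\omega)$ really does have the full dimension $d(d+1)$.

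Applying the fiber dimension theorem (Theorem~\ref{thm:fiber}) to $\pi_2$ — using that the generic fiber dimension is at least $d(d+1)$ — gives
\[
    \dim(\ST(\IF_i)) \le \dim(B_i) - d(d+1) = m - \tbinom{d+1}{2} + F_i - C_i.
\]
If $C_i > F_i$, this strict inequality yields $\dim(\ST(\IF_i)) < m - \binom{d+1}{2}$, which is the first claim. The second claim then follows immediately, since Corollary~\ref{cor:dl} identifies $D_L = m - \binom{d+1}{2}$ whenever $G$ is $(d+1)$-connected.

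The one non-routine step is the affine-orbit lower bound on fiber dimension. In Lemma~\ref{lem:RisD} the fibers were exactly the affine orbits because $\Omega$ had maximal rank $n-d-1$ by generic global rigidity; here we have no such rank control, so the fibers could be larger, but that only strengthens our upper bound on $\dim(\ST(\IF_i))$. The subtlety is simply to verify that the affine orbit really embeds in the fiber, which reduces to the two observations already noted: affine invariance of $\IF_i$ (established in the stratification construction) and the fact that $\Omega\mathbf{1}=0$ makes the equilibrium stress condition affine invariant.
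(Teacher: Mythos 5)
Your proof is correct and follows essentially the same route as the paper: form the equilibrium stress bundle over $\IF_i$, apply the bundle-dimension lemma (the paper cites Lemma~\ref{lem:bundle2} rather than Lemma~\ref{lem:bundle1}, but the content is the same), and then subtract at least $d(d+1)$ for the affine orbit inside each fiber of the projection onto the stress factor. The paper phrases the final step slightly differently (noting the bundle dimension is already strictly below $m+\binom{d+1}{2}$ when $C_i>F_i$, then subtracting the fiber dimension), but the bookkeeping is identical to yours.
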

\begin{proof}
We proceed exactly as in the proof of Lemma~\ref{lem:RisD}.

By assumption 
the dimension of $\IF_i$  is $nd-C_i$
and 
the dimension of equilibrium stresses for
any single framework in $\IF_i$ is $m-nd+\binom{d+1}{2}+F_i$.

Let the  equilibrium stress bundle of $\IF_i$, a subset of 
$\RR^{nd} \times \RR^m$, consist of pairs 
$(\p,\omega)$ where $(G,\p) \in \IF_i$ and $\omega$ is an equilibrium
stress of $(G,\p)$. 
The projection, $\pi_2$, of this bundle  onto its second factor, gives us
$\ST(\IF_i)$.

From Lemma~\ref{lem:bundle2}, 
this bundle is irreducible and has dimension 
$[nd-C_i]+
[m-nd+\binom{d+1}{2}+F_i]$ 
which by assumption is strictly less than
$m+\binom{d+1}{2}$. 

The fiber of $\pi_2$ around some generic
$(\p,\Omega)$ includes at least the invertible
affine images of $\p$. Since, by assumption, $\p$ has a full dimensional
affine span, this fiber has  
dimension at least $d(d+1)$.
Thus the dimension of the image of $\pi_2$ is strictly less than 
$m-\binom{d+1}{2}$. 

From Corollary~\ref{cor:dl}, when $G$ is $(d+1)$-connected,
$D_L = m-\binom{d+1}{2}$. 
%\qed
\end{proof}

\subsection{Any $\IF_i$ with many tangential infinitesimal flexes
cannot account for any stresses in $\LL$}
\label{sec:tang}

\begin{definition}
Let \defn{$T'_i$} be the dimension, at any generic point $\p$ 
in $\IF_i$,
of the space of infinitesimal
flexes in $\RR^d$ of $(G,\p)$
(each thought of a single  vector in $\RR^{nd}$)
that are tangential to the manifold $\IF_i$ at $\p$.
Define \defn{$T_i := T'_i - \binom{d+1}{2}$}, to discount the 
infinitesimal flexes arising from the group $SE(d)$ (these must
all be 
tangential, since $\IF_i$ is invariant with respect to invertible
affine transforms).
Non-generically within $\IF_i$, 
the dimension of tangential infinitesimal flexes
can rise. 

Let \defn{$X_i := F_i-T_i$}.
This quantity represents the dimension, at any generic point $\p$ in $\IF_i$, 
of a linear space of (necessarily non-trivial) infinitesimal flexes that 
is linearly independent from  the tangent space of $\IF_i$ at $\p$.
\end{definition}

\begin{remark}
As mentioned in the previous definition, 
non-generically within $\IF_i$, the dimension of tangential
infinitesimal flexes can rise. One might be tempted to simply refine
our stratification based on this property, cutting out such loci 
into their  own $\IF_i$.  The problem
with this approach is  the 
resulting subdivided $\IF_i$ might not  be invariant to invertible affine
transformations. 

To see the difficulty, suppose $\p$ and $\q$ in some (unsubdivided) $\IF_i$ 
are related by
a $d$-dimensional invertible affine transform with  $\M$ as its linear 
factor.
Then the tangent of $\IF_i$ at $\p$ will map under $\M$ to the tangent 
of $\IF_i$ at $\q$. On the other hand, some flex $\p'$ of $(G,\p)$ will
map to a flex of $(G,\q)$ through the dual map $\M^{-t}$.
\end{remark}

\begin{lemma}
\label{lem:mech}
Let $G$ be  any graph.
For any $\IF_i$ such that  $T_i \geq 1$, every 
generic framework $(G,\q)$ in $\IF_i$ must be locally flexible in $\RR^d$.
\end{lemma}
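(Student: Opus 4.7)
The plan is to apply the constant rank theorem to the squared-edge-length map $\ell \colon \RR^{nd} \to \RR^m$, restricted to the smooth manifold $\IF_i$, and deduce the existence of a finite motion from a dimension count against the $SE(d)$-orbit.

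First, I would observe that $d\ell_\p$ is (up to a factor of two) the rigidity matrix $R(\p)$, so the derivative of $\ell|_{\IF_i}$ at $\p \in \IF_i$ is $R(\p)$ restricted to $T_\p \IF_i$, and its kernel is precisely the space of tangential infinitesimal flexes. By definition, at a generic $\p \in \IF_i$ this kernel has dimension $T_i' = T_i + \binom{d+1}{2}$, so there is a Zariski-open dense subset $U \subseteq \IF_i$ on which this dimension equals $T_i'$ exactly (it can only jump upward on a lower-dimensional subset, by upper semi-continuity of kernel dimension). Since the rigidity matrix rank is constant on all of $\IF_i$ by construction, $\dim T_\p \IF_i$ is constant by smoothness, and the tangential flex dimension is constant on $U$, the map $\ell|_U$ has constant rank $\dim(\IF_i) - T_i'$.

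Second, the constant rank theorem then gives, for any $\q \in U$, that the fiber $M_\q := \ell^{-1}(\ell(\q)) \cap U$ is a smooth submanifold of $U$ of dimension $T_i'$ through $\q$. Every configuration in $M_\q$ has the same edge lengths as $\q$, i.e., is equivalent to $(G,\q)$ as a framework.

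Third, because $\IF_i$ is invariant under invertible affine transformations (in particular under $SE(d)$), and because every configuration in $\IF_i$ has $d$-dimensional affine span, the $SE(d)$-orbit of $\q$ is contained in $M_\q$ and has dimension exactly $\binom{d+1}{2}$. If $T_i \geq 1$, then $\dim M_\q = T_i' > \binom{d+1}{2}$ strictly, so $M_\q$ strictly contains the congruence orbit of $\q$. A smooth path in $M_\q$ starting at $\q$ and transverse to the orbit is a continuous, edge-length-preserving motion of $\q$ that is not a family of congruences. Hence $(G, \q)$ is locally flexible in $\RR^d$ for every $\q$ in a Zariski-open set, and in particular for every generic $\q \in \IF_i$.

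The main step to get right is the constant-rank hypothesis on a large enough subset; this is where the two structural properties that $\IF_i$ was built to have (constant rigidity-matrix rank along $\IF_i$ and smoothness) are used together with the semi-continuity argument that makes $T_i'$ a genuinely generic quantity. Everything else is a dimension comparison between the level set of $\ell|_{\IF_i}$ and the congruence orbit.
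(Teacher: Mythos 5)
Your proof is correct and uses essentially the same approach as the paper: applying the constant rank theorem to the squared-edge-length map restricted to $\IF_i$, at a generic point where the kernel of the linearization has its minimal dimension $T_i + \binom{d+1}{2}$, and then noting that when $T_i \geq 1$ the resulting fiber strictly exceeds the congruence orbit. The paper leaves the final comparison against the $SE(d)$-orbit implicit, whereas you spell it out, but the argument is the same.
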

\begin{proof}
Consider the smooth map 
from $\IF_i$ to $\RR^m$ that measures
squared edge lengths.
The kernel of the linearization of the map at any configuration
$\p \in \IF_i$
consists of the 
infinitesimal flexes for $(G,\p)$ that are tangential to $\IF_i$.
A regular point of this map  is a configuration where the dimension
of the kernel of the linearization of the map
is at its minimum (and generic) value, 
$T_i +\binom{d+1}{2}$. Every generic point in $\IF_i$ is a regular
point of this map.

At a regular point, $\q$, using the 
constant rank theorem, we see that locally, there is a 
$T_i +\binom{d+1}{2}$-dimensional
submanifold of $\IF_i$ that maintains the edge lengths of $\q$.
This fiber gives us our desired non-trivial, finite flex.
\end{proof}

\begin{lemma}
\label{lem:nearby}
If $(G,\p)$ is a framework in $\IF_i$ (resp. IR)
and is super stable, then so too is
any other nearby-enough framework $(G,\q)$ in $\IF_i$ (resp. IR).
\end{lemma}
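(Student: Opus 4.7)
The plan is to extend the equilibrium stress $\omega$ at $\p$ smoothly to a nearby family of equilibrium stresses, and then check that each of the three defining properties of super stability (PSD stress matrix, maximal rank $n-d-1$, and no conic at infinity on edge directions) is preserved under small perturbation within the stratum. The key structural fact that makes this work is that by construction every $\IF_i$ (and $\IR$) is a smooth semi-algebraic set on which the rigidity matrix $R(\q)$ has constant rank; consequently the space of equilibrium stress vectors, being the cokernel of $R(\q)$, forms a smooth vector bundle over the stratum in a neighborhood of $\p$. I would therefore begin by choosing a smooth local section $\omega(\q)$ of this bundle with $\omega(\p)=\omega$, yielding a continuously varying stress matrix $\Omega(\q)$ with $\Omega(\p)=\Omega$.

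Next I would control the rank. Because the affine span being $d$-dimensional is an open condition and is automatic on $\IR$ and on each $\IF_i$, we always have $\mathrm{rank}\,\Omega(\q)\le n-d-1$. On the other hand, rank is lower semicontinuous, so $\mathrm{rank}\,\Omega(\q)\ge \mathrm{rank}\,\Omega(\p)=n-d-1$ for $\q$ close enough to $\p$. Hence $\Omega(\q)$ has rank exactly $n-d-1$ on a neighborhood, matching the rank requirement in the definition of super stability.

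The PSD condition then follows from continuity of eigenvalues. Since $\Omega(\q)$ depends continuously on $\q$ and has constant rank $n-d-1$, its $n-d-1$ nonzero eigenvalues depend continuously on $\q$ (while $d+1$ eigenvalues stay at $0$). At $\p$ all the nonzero eigenvalues are strictly positive by super stability, so they remain strictly positive on a (possibly smaller) neighborhood, and therefore $\Omega(\q)$ is PSD there. This is the step that I expect to be the main technical point to articulate cleanly; it is not deep, but it requires knowing that one can pick the stress varying continuously, which is exactly why the stratification was set up to have constant rigidity-matrix rank.

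Finally I would handle the conic-at-infinity condition. The map sending a configuration $\q$ with $d$-dimensional affine span to the linear map $Q\mapsto \bigl((\q_i-\q_j)^{t} Q (\q_i-\q_j)\bigr)_{\{i,j\}\in E(G)}$ on symmetric $d\times d$ matrices depends continuously on $\q$, and the condition ``the edge directions do not lie on a conic at infinity'' is exactly the condition that this linear map is injective, i.e.\ that a certain $m\times\binom{d+1}{2}$ matrix has full column rank. Full column rank is an open condition, so it persists near $\p$. Combining these three ingredients, $(G,\q)$ has a PSD equilibrium stress matrix of rank $n-d-1$ and its edges do not lie on a conic at infinity, so $(G,\q)$ is super stable, completing the proof in both the $\IF_i$ and $\IR$ cases simultaneously.
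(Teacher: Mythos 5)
Your proof is correct and follows essentially the same route as the paper's: both extract a continuously varying equilibrium stress near $\p$ (the paper via Lemma~\ref{lem:nearStress}, you via the constant-rank-of-$R(\q)$/smooth-cokernel-bundle observation, which is the same mechanism), both use continuity of eigenvalues together with the automatic $(d{+}1)$-dimensional kernel to keep the stress PSD of rank $n-d-1$, and both invoke openness of the ``no conic at infinity'' condition. Your reformulation of the conic condition as full column rank of an $m\times\binom{d+1}{2}$ matrix and your explicit use of lower semicontinuity of rank are slightly more spelled-out versions of steps the paper states more tersely, not a different argument.
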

\begin{proof}
By assumption, $(G,\p)$ has a PSD equilibrium stress matrix
$\Omega$ of rank $n-d-1$.
From Lemma~\ref{lem:nearStress},
any nearby $(G,\q)$ in $\IF_i$ (resp. IR)
must have some equilibrium stress matrix
close to $\Omega$. 
Since eigenvalues vary continuously with the matrix, this
nearby equilibrium stress matrix must retain its $n-d-1$
positive eigenvalues. As an equilibrium stress matrix, 
it cannot gain any more non-zero eigenvalues, and is thus 
PSD.

By assumption, $(G,\p)$ does not have its edges on a conic
at infinity.
Since frameworks with their edges on a conic at infinity are a proper
algebraic subset of configuration space, if $(G,\p)$ does not 
have its edges on a conic at infinity, then neither do 
 nearby frameworks in $\RR^{nd}$.
\end{proof}

\begin{lemma}
\label{lem:ClessF}
Let $G$ be  any graph.
For any $\IF_i$ such that 
$T_i \geq 1$, it must be that
$\ST(\IF_i)$ is disjoint from $\LL$.
\end{lemma}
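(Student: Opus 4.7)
The plan is to argue by contradiction: suppose there exists $\Omega \in \ST(\IF_i) \cap \LL$. Then $\Omega$ is the equilibrium stress matrix of some framework $(G,\p) \in \IF_i$, and by definition of $\IF_i$ this framework has a $d$-dimensional affine span. Since $\Omega \in \LL$, Alfakih's Theorem~\ref{thm:alf} applies to the pair $(\p,\Omega)$ and forces $(G,\p)$ to be in general affine position and hence super stable. In particular, $(G,\p)$ is universally rigid, and so locally rigid in $\R^d$.

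Next I would spread this super stability to a generic framework inside $\IF_i$. By Lemma~\ref{lem:nearby}, every framework $(G,\q) \in \IF_i$ in a sufficiently small neighborhood of $(G,\p)$ is itself super stable, and therefore locally rigid. Because $\IF_i$ is a smooth, irreducible semi-algebraic set defined over a finite extension of $\QQ$, its non-generic points lie in a proper algebraic subset of strictly smaller dimension; hence the generic points of $\IF_i$ are dense, and I can pick a generic $(G,\q)$ inside the neighborhood where super stability persists.

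Finally, I would invoke Lemma~\ref{lem:mech}: the assumption $T_i \geq 1$ guarantees that every generic framework in $\IF_i$ admits a non-trivial finite flex in $\R^d$, that is, is locally flexible. This directly contradicts the local rigidity of the generic framework $(G,\q)$ established in the previous step, completing the proof.

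The main obstacle to keep in mind is not a deep technical step but rather the compatibility of the three ``locations'' at which super stability, local rigidity, and local flexibility are asserted. Alfakih supplies super stability only at the single witness $\p$ (and only because of the strong general-position conclusion of Theorem~\ref{thm:alf}); Lemma~\ref{lem:nearby} extends it to a full neighborhood within $\IF_i$; and Lemma~\ref{lem:mech} produces the contradicting finite flex only at generic points. The bridge joining them is the density of generic points inside the smooth, irreducible stratum $\IF_i$, which is precisely the structural feature for which the earlier stratification of $\IF$ was engineered.
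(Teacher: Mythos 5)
Your proposal is correct and follows essentially the same route as the paper's own proof: it combines Theorem~\ref{thm:alf} (to force super stability of the witness framework in $\IF_i$), Lemma~\ref{lem:nearby} (to spread super stability to a neighborhood), the density of generic points in a smooth irreducible stratum (Lemma~\ref{lem:genDense}), and Lemma~\ref{lem:mech} (to produce the contradicting finite flex). The only difference is presentational — you frame it as a contradiction starting from a hypothetical $\Omega$ in the intersection, while the paper first shows no framework in $\IF_i$ is super stable and then invokes Theorem~\ref{thm:alf} — but the logical content and the key lemmas are identical.
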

\begin{proof}
If any 
framework $(G,\p)$
in $\IF_i$ were super stable, then 
from Lemma~\ref{lem:nearby},
so too would
be any nearby enough framework in $\IF_i$.
Since $\IF_i$ is smooth, 
the generic  frameworks of $\IF_i$
are (Euclidean) dense in $\IF_i$ (Lemma~\ref{lem:genDense}).
Thus there would  be a nearby $\q$ which is generic in 
$\IF_i$ and such that $(G,\q)$ is super stable.
But from Lemma~\ref{lem:mech}, 
since $T_i \geq 1$, 
there can be no such $(G,\q)$.
So 
$(G,\p)$
cannot be super stable.

On the other hand, 
from Theorem~\ref{thm:alf}, all 
frameworks (with a $d$-dimensional affine span)
arising as the kernels of the stresses from  $\LL$,
must
be super stable. 
\end{proof}

\subsection{When $G$ is generically globally rigid, 
there can be no $\IF_i$ with many transverse infinitesimal flexes}

\begin{lemma}
\label{lem:ClessX}
Let  $G$ be generically globally rigid in $\RR^d$.
Then for all $i$, 
we have
$C_i > X_i$.
\end{lemma}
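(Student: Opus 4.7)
The plan is to argue by contradiction. First, observe that $X_i \le C_i$ always holds: the transverse flex ``space'' $F_\p / (F_\p \cap T_\p \IF_i)$ of dimension $X_i$ is exactly the image of the projection $F_\p \to N_\p \IF_i$, and this image sits inside the $C_i$-dimensional normal bundle fiber. So it suffices to rule out $C_i = X_i$; equivalently, to rule out the condition
\[
F_\p + T_\p \IF_i \;=\; T_\p \R^{nd} \qquad \text{at generic } \p \in \IF_i .
\]

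The main idea is to combine the edge-length map $E \colon \R^{nd} \to \R^m$ with generic global rigidity to bound the dimension of $W := E^{-1}(E(\IF_i))$ from above, and then use the hypothesis $C_i = X_i$ to bound $\dim W$ from below, producing a contradiction.

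\textbf{Step 1 (Upper bound from GGR).} I first show $\dim W \leq nd-1$. By generic global rigidity, the set $U \subseteq \R^{nd}$ of globally rigid configurations is Zariski open with complement of dimension at most $nd-1$. If $\q \in U \cap W$ then $\q$ is equivalent to some $\p \in \IF_i$, and by global rigidity of $\q$ the framework $\p$ must be congruent to $\q$; by the affine invariance of $\IF_i$ established in Lemma~\ref{lem:compinvar} and preceding lemmas, this forces $\q \in \IF_i$. Hence $U \cap W \subseteq \IF_i$, and combined with $\dim \IF_i = nd - C_i \le nd - 1$ (since $C_i \ge 1$, as GGR implies generic infinitesimal rigidity by Theorem~\ref{thm:glr}) and $\dim (W \setminus U) \le nd - 1$, we conclude $\dim W \le nd-1$.

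\textbf{Step 2 (A tubular map from $C_i = X_i$).} Under the contradiction hypothesis, the bundle projection $F\IF_i \to N\IF_i$ is surjective on generic fibers. Choose a smooth local bundle section $\sigma \colon N\IF_i \to F\IF_i$ of this projection, so that $\sigma(\p, v) \in F_\p$ has normal component equal to $v$. Define
\[
\Psi \colon N\IF_i \to \R^{nd}, \qquad \Psi(\p, v) := \p + \sigma(\p, v).
\]
At the zero section, $d\Psi(\p, 0)(\p', v') = \p' + \sigma_\p(v')$ with $\p' \in T_\p \IF_i$ and $\sigma_\p(v') \in F_\p$, and by the spanning hypothesis the image of this differential is all of $T_\p \R^{nd}$. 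Since both sides have dimension $nd$, $\Psi$ is a local diffeomorphism at the zero section, whose image is a Euclidean open tubular neighborhood of $\IF_i$.

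\textbf{Step 3 (Forcing the image into $W$).} For $\q = \Psi(\p, v)$ with $v$ small, the first-order change in edge lengths vanishes (since $\sigma(\p,v) \in \ker dE(\p)$), leaving the second-order expansion
\[
E(\Psi(\p, v)) - E(\p) = Q_\p(v) + O(\|v\|^3), \qquad Q_\p(v) := \bigl(|\sigma(\p, v)_i - \sigma(\p, v)_j|^2\bigr)_e.
\]
Under $C_i = X_i$, a dimension count shows that $dE(\p)$ restricted to $T_\p \IF_i$ has the same image as $dE(\p)$ on $T_\p \R^{nd}$ (both of dimension $nd - F_i - \binom{d+1}{2} = nd - C_i - T_i - \binom{d+1}{2}$). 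Apply the implicit function theorem to $E|_{\IF_i}$ to produce, for each $(\p, v)$ generic and small, a point $\tilde \p \in \IF_i$ with $E(\tilde \p) = E(\Psi(\p, v))$, which gives $\Psi(\p, v) \in W$. Hence the tubular image $\Psi(N\IF_i)$ is contained (up to a set of lower dimension) in $W$, so $\dim W \ge nd$, contradicting Step~1.

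\textbf{The main obstacle.} The delicate point is Step~3: the implicit function theorem compensation requires that the second-order defect $Q_\p(v)$ lie in the image of $dE|_{T_\p \IF_i}$. Only the corresponding dimensions automatically agree under $C_i = X_i$; the actual membership is the algebraic condition that $\omega \cdot Q_\p(v) = 0$ for every equilibrium stress $\omega$ of $(G,\p)$, which must be established by exploiting the specific interaction between the flex bundle, the stress space, and the defining equations of $\IF_i$. This is where the proof requires the most care.
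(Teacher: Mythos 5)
Your Step~3 is a genuine gap, and you have correctly identified where it is, but you have not closed it, and the route you sketch (compensating the second-order length defect by moving inside $\IF_i$ via the implicit function theorem) is not what the paper does and looks genuinely hard: you would need $Q_\p(v)$ to lie in the image of $dE|_{T_\p\IF_i}$, and there is no reason a priori for that to hold.

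The key idea you are missing is Connelly's ``pushpull'' observation, which the paper records as Lemma~\ref{lem:pushpull}: if $\p'$ is an infinitesimal flex of $(G,\p)$, then $(G,\p+\p')$ and $(G,\p-\p')$ have \emph{exactly} the same edge lengths (the cross term $2(\p_i-\p_j)\cdot(\p_i'-\p_j')$ vanishes on edges, and the other two terms in the expansion of $|(\p_i\pm\p_i') - (\p_j\pm\p_j')|^2$ are even in $\p'$), and they are congruent iff $\p'$ is trivial. This makes the second-order defect irrelevant: you do not need $\Psi(\p,v)$ to be equivalent to a point of $\IF_i$, you only need it to be equivalent to \emph{some} non-congruent configuration, and $\p - \sigma(\p,v)$ serves. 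Since $\sigma(\p,v)$ for $v\ne 0$ has a nonzero normal component, it is not tangent to $\IF_i$, and since all trivial flexes are tangent (by affine invariance of $\IF_i$), it is a non-trivial flex; hence $\Psi(\p,v)$ is not globally rigid. Your Step~2 already shows $\Psi$ is a local diffeomorphism, so its image is Euclidean open, giving an open set of non-globally-rigid configurations and contradicting GGR directly, without ever introducing $W$. (The paper's version of Step~2 is a rational map $o(\p,\x)=\p+f(\p,\x)$ built from a rational flex field, and it checks full rank of the linearization at a generic point rather than constructing a smooth normal-bundle section, but that is only a technical difference; your Step~1 then becomes unnecessary.) I suggest you replace Step~3, and the detour through $W$, with the pushpull lemma; as written the proposal is incomplete.
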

\begin{proof}
Otherwise we could apply 
Connelly's global flexibility argument from~\cite{Connelly-K55} and obtain a 
contradiction with the assumed generic global rigidity.
For completeness, we will spell out this argument in detail.

We first record the following principle~\cite[Theorem 6.1]{Connelly-K55}.
\begin{lemma}
\label{lem:pushpull}
Suppose that $\p'$ is an infinitesimal flex for a framework 
$(G,\p)$ in $\RR^d$, where the points of $\p$ do not all lie in a hyperplane.
Then $(G,\p+\p')$ has the same edge lengths as $(G,\p-\p')$.
Moreover, $\p+\p'$ is 
congruent to $\p-\p'$ iff $\p'$ is a trivial infinitesimal flex.
\end{lemma}

From Lemma~\ref{lem:flexmap},
we can define a 
rational map ($f$ stands for flex),
$f(\p,\x): \IF_i \times \RR^{F_i} \to \RR^{nd}$
that (over its domain/where there is no division by zero)
maps to a non-trivial infinitesimal flex $\p' \in \RR^{nd}$ of $\p$,
and for a fixed
$\p$, the map is a linear injective map over $\x$.
Let $\p^0$ denote some configuration that is generic in $\IF_i$,
around which $f$ is well defined.

Given $f$, define the rational map ($o$ stands for offset),
$o(\p,\x) := \p + f(\p,\x)$,
that offsets $\p$ by an infinitesimal flex.
Now we look at the image of the linearization, $o^*$ at
$(\p^0,\0)$.
By varying just the $\p$-variables, we see that this
image contains the tangent space of $\IF_i$ at $\p^0$ (of dimension
$nd-C_i$).  By varying just the $\x$-variables, we see that this image
contains a space of non-trivial flexes of dimension $F_i$. 
Since 
$\p^0$ is generic in $\IF_i$,
this space contains a linear space of dimension $X_i$
that is linearly independent from the tangent space of $\IF_i$ at
$\p^0$.  Thus the image of the linearization is of dimension at least
$nd - C_i + X_i$ (and no greater than $nd$). 
When $C_i \leq X_i$, this rank is $nd$
and so we have a local submersion. As a result, the image of $o$ has
dimension $nd$. Thus we have shown that a full dimensional subset of
configurations can be reached by starting with an infinitesimally
flexible framework in $\IF_i$ and adding to it some non-trivial 
infinitesimal flex.

From Lemma~\ref{lem:pushpull},
when $\p$ has a
full $d$-dimensional affine span (as  all frameworks in $\IF$ do by
assumption) and $\p'$ is a non-trivial infinitesimal flex, then
$\p+\p'$ must be not globally rigid in $\RR^d$.
But our construction has found a
full dimensional set of such configurations, which contradicts the
assumed generic global rigidity.  
\end{proof}

%%%%%%%%%%%%%%%%%%%%%%%
\subsection{Putting the cases together}
%%%%%%%%%%%%%%%%%%%%%%%

Having dealt with all the possibilities, we arrive at our main proposition:

\begin{proposition}
\label{prop:main}
If $G$ is generically globally rigid in $\RR^d$, 
then a full dimensional subset of
$\LL$ is contained in 
$\ST(\IR)$.
\end{proposition}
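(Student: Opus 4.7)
The plan is a dimension-counting argument that combines the preceding four subsections. Since $G$ is generically globally rigid in $\RR^d$, Theorem~\ref{thm:hen} gives that $G$ is $(d+1)$-connected, so Corollary~\ref{cor:dl} yields $\dim \LL = D_L = m - \binom{d+1}{2}$, and Lemma~\ref{lem:RisD} yields $\dim \ST(\IR) = D_L$ as well. My goal therefore reduces to showing that $\LL \setminus \ST(\IR)$ is contained in a set of strictly smaller dimension, for then $\LL \cap \ST(\IR)$ must have dimension $D_L$, which is exactly the claim that a full dimensional subset of $\LL$ sits inside $\ST(\IR)$.

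First, I would verify the partition $\LL \subseteq \ST(\IR) \cup \bigcup_i \ST(\IF_i)$. Any $\Omega \in \LL$ arises, by construction, as the Gram matrix of a centered GOR, and Alfakih's Theorem~\ref{thm:alf} guarantees that $\Omega$ is the equilibrium stress matrix of some framework $(G,\p)$ with $d$-dimensional affine span. By the definitions of $\IR$ and $\IF$, such a $\p$ lies in $\IR$ or in one of the strata $\IF_i$, which establishes the inclusion.

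Next, I would show that $\dim(\ST(\IF_i) \cap \LL) < D_L$ for every $i$, splitting on the value of $T_i$. If $T_i \geq 1$, Lemma~\ref{lem:ClessF} gives $\ST(\IF_i) \cap \LL = \emptyset$ outright, so there is nothing to bound. If $T_i = 0$, then $X_i = F_i - T_i = F_i$, and Lemma~\ref{lem:ClessX}, whose proof is where the generic global rigidity hypothesis is crucially invoked through Connelly's push-pull argument, gives $C_i > X_i = F_i$; applying Lemma~\ref{lem:CgreaterF} then yields $\dim \ST(\IF_i) < D_L$, which in particular bounds the intersection with $\LL$. Since the stratification $\{\IF_i\}$ is finite, the union $\bigcup_i (\ST(\IF_i) \cap \LL)$ still has dimension strictly less than $D_L$. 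Combined with the inclusion above, $\LL \setminus \ST(\IR)$ is contained in this low-dimensional union, and the conclusion follows.

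The main conceptual obstacle is not in this final assembly, which is bookkeeping, but in the dichotomy on $T_i$ having to be exhaustive with both branches producing a strict dimension deficit. The $T_i \geq 1$ branch uses Alfakih's general-position output to rule out super stability of continuously flexible frameworks, while the $T_i = 0$ branch needs GGR to prevent transverse infinitesimal flexes from filling out configuration space. Once these two inputs are in place, nothing else stands in the way.
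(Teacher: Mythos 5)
Your proof is correct and takes essentially the same route as the paper: covering $\LL$ by $\ST(\IR)\cup\bigcup_i \ST(\IF_i)$, then bounding each $\ST(\IF_i)$ using the same three lemmas (ClessX, ClessF, CgreaterF) and the finiteness of the stratification. The only cosmetic difference is that you split directly on $T_i=0$ versus $T_i\ge 1$, whereas the paper splits on $C_i\le F_i$ versus $C_i>F_i$ and then further decomposes the first case; the two case analyses are logically equivalent given $F_i=X_i+T_i$. (Your citation of Lemma~\ref{lem:RisD} is harmless but superfluous — the argument only needs $\dim\LL=D_L$ and the strict dimension deficits on the $\ST(\IF_i)$, not the dimension of $\ST(\IR)$ itself.)
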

\begin{proof}
The sets,
$\IR$ and $\IF_i$ cover  all configurations with a full affine span,
and thus the union $\ST(\IR)$ and the $\ST(\IF_i)$ must contain all of 
equilibrium stresses of
all configurations with full affine spans.
Meanwhile, from Theorem~\ref{thm:alf}, all of the stresses in
$\LL$ arise as equilibrium stress matrices of
configurations in general affine position and thus
with full affine spans.
Thus  $\LL$ must be
contained in the union $\ST(\IR)$ and $\ST(\IF_i)$. 

First we show that $\LL$ must be disjoint from 
the $\ST(\IF_i)$ where $C_i \leq F_i = X_i +T_i$. 
When $C_i \leq F_i$ then either (a) $C_i \leq X_i$ or (b) $T_i \geq 1$.
The  case (a) cannot occur at all due to Lemma~\ref{lem:ClessX}. In 
case (b), $\LL$ must be disjoint from $\ST(\IF_i)$ due to Lemma~\ref{lem:ClessF}.

Next we look at the $\ST(\IF_i)$ where $C_i > F_i$.
From Theorem~\ref{thm:hen}, $G$ is  $(d+1)$-connected.
But then from Lemma~\ref{lem:CgreaterF}, 
these $\ST(\IF_i)$ are of lower
dimension than 
$\LL$. Thus only a low dimensional subset of $\LL$ can be contained in these
$\ST(\IF_i)$.

Thus a full dimensional subset of $\LL$ must not be contained in the union
of the $\ST(\IF_i)$ and thus must be 
be contained in $\ST(\IR)$.
%\qed
\end{proof}

\begin{remark}
When $G$ is  $(d+1)$-connected but not generically globally rigid in $\RR^d$,
then from the above discussion we see that almost all of the stresses
in $\LL$ must come from $\IF_i$ where
$C_i \leq X_i$.
\end{remark}

And now we can prove our main theorem.

\noindent {\bf Proof of Theorem~\ref{thm:main}.}
From Proposition~\ref{prop:main}, there must be an $\Omega$ that is both 
in $\LL$ and in $\ST(\IR)$. Thus there must be a framework $(G,\p)$
which is infinitesimally rigid and has $\Omega$ as one of its
equilibrium stress matrix.
From Theorem~\ref{thm:alf}, $\Omega$ must be 
PSD of rank $n-d-1$, and so
$(G,\p)$ must be super stable.

For the second part, we use Lemma~\ref{lem:nearby} to conclude that
any nearby framework in IR must be super stable. As IR is full dimensional
and open, any nearby framework in configuration space must
be infinitesimally rigid and super stable. Such a neighborhood 
contains a generic configuration.
\qed

\section{The Stress Variety}
\label{sec:stress}
The main theorem in this paper relates to a deeper question about 
the algebraic set of stress matrices.
As above, 
let $G$ be a  graph with $n$ vertices and $m$ edges,
and $d$ a fixed dimension. 

\begin{definition}
Let \defn{$\ST$} be the real algebraic set of $n$-by-$n$
\defn{$d$-dimensional stress matrices} for $G$.
Specifically, this is the set of real symmetric matrices that have $0$ entries
corresponding to non-edges of $G$,  with the all-ones vector in its
kernel, and with  rank $n-d-1$ or less.
\end{definition}
The set $\ST$ is the union of $\ST(\IR)$ and all of the $\ST(\IF_i)$
described above. In particular since the kernel of an $\Omega \in \ST$ is
of dimension at least $d+1$, 
we can always pick a framework $\p$ with a $d$-dimensional
affine span with spatial coordinates in this kernel. Clearly $\Omega$ is an 
equilibrium stress matrix for $\p$.

\begin{question}
Suppose that $G$ is generically globally rigid in $\RR^d$. Is its associated
$d$-dimensional stress variety, $\ST$,  irreducible?
\end{question}

There are some results in the literature about the irreducibility of
certain linear sections of determinantal varieties~\cite{merle,eisen}, but
these do not appear to be strong enough to answer the present question.

The irreducibility of $\ST$ would be useful, since any 
strict algebraic subset $W$ of an irreducible (semi-)algebraic set 
must be of strictly lower dimension!
In particular, an affirmative answer to this question would then lead to an 
alternative direct proof
of Theorem~\ref{thm:main}, which we now sketch:

As described in Section~\ref{sec:kerstress},
we can select a rational map that maps from a matrix
$\Omega \in \ST$ 
to a 
framework in its kernel with a $d$-dimensional affine span. 
In the image of 
this map, $d+1$ chosen vertices will always lie in some pinned positions.
The map will be undefined 
over some subvariety $V$ of $\ST$.
(The subvariety $V$ consists of 
all of the $\Omega$
of rank strictly less than $n-d-1$ and any
$\Omega$ of rank $n-d-1$ which is an
equilibrium 
stress matrix of a $d$-dimensional framework
where the chosen $d+1$ vertices lie in a single hyperplane.)  

The preimage of the algebraic
set, $\IF$, must lie in some algebraic subset $W$ of $\ST$. By
Theorem~\ref{thm:necc}, this subset of $\ST$ is strict.  

Suppose that there is an $\Omega \in \ST(\IF)$ that has rank $n-d-1$ and
is the equilibrium stress
of an infinitesimally flexible framework $(G,\p)$ with
the chosen $d+1$ vertices in general affine position. Then our rational map
must map $\Omega$ to a configuration $\q$ which is an affine transform
of $\p$. The framework $(G,\q)$ must be in $\IF$, and thus 
$\Omega \in W$.

Thus $\ST(\IF)$ must
lie in the union of $V$ and $W$.  
The rest of the matrices,
$\ST-(V\cup W)$, must be in $\ST(\IR)$.
If $\ST$ is irreducible, the subset $V \cup W$
must be of strictly lower dimension than $\ST$ itself.  

Meanwhile, Lemma~\ref{lem:RisD} tells us that the dimension of $\ST(\IR)$
is $m-\binom{d+1}{2}$.  Thus the dimension of $\ST(\IF)$ is strictly less
than $m-\binom{d+1}{2}$ and thus less than $D_L$. And we are done.\qed
\vspace{2 mm}

Our question is also related to one posed by 
Lov\'asz et al~\cite{Lovasz-Schrijver}.
Recalling the definition of an OR 
from Definition~\ref{def:GOR}
they ask: under what conditions is the set of ORs irreducible?
Some progress on this question is reported in~\cite{herz}.

\section{Graph realization SDP}
Our results  relate to the problem of 
finding a framework $(G,\p)$ with a specific 
set of desired edge lengths.

\begin{definition}
Let $(G,\p)$ be a $d$-dimensional framework, and let 
\[
\ell = (\ell_{ij})_{\{i,j\}\in E(G)} := (|\p_i - \p_j|^2)_{\{i,j\}\in E(G)}
\]
be the vector of squared edge length measurements. 
The \defn{graph realization problem} is to find $(G,\p)$ given 
$G$, $\ell$ and $d$. This problem is is 
NP-hard \cite{Saxe}.
An instance is well-posed if and only if $(G,\p)$ is 
globally rigid.
\end{definition}
Due to its wide applicability, graph realization, and related
``distance geometry problems'', 
have received a lot of attention.  
See the survey \cite{dist-survey} 
for an overview.
Given the problem's hardness, 
practical algorithms will be
heuristic
\footnote{When $G$ is $K_n$, one rigorous 
notion of an approximate solution is a low-distortion embedding (see, e.g., \cite[Chapter 15]{M02-discrete}
or \cite{L02-embedding}).  This is a bit 
different in flavor from distance geometry where the dimension constraint and being exact on the 
given distances are most important.} in nature, or 
involve restricting $G$
to some class that is smaller than being 
generically globally rigid.  
An important practical approach   
is based on semidefinite
programming (see 
\cite{VB96} for a general overview of SDP).
\begin{definition}
Let $S_n$ be the cone of symmetric $n\times n$ real matrices.
Let $S^+_n$ be the cone of symmetric $n\times n$ positive semidefinite 
matrices, and define an inner product on $n\times n$ matrices
by $\iprod{X}{Y} := \operatorname{Tr}{X^t Y}$.  A \defn{semidefinite
program} (SDP) 
is an optimization problem of the form
\[
	\inf_X \{\iprod{X}{\beta} : X\in S^+_n\cap (L + b)\}
\]
where $b$ and $\beta$ are in $S_n$ and $L$ is a linear suspace of $S_n$.
Semidefinite programming is a convex problem that can be approximated in polynomial time.
\end{definition}
A semidefinite program for graph realization 
has been studied for some time  (see \cite{johnson1995connections,Alf-wolk-sdp,LLR95}; \cite[Section 4]{dist-survey} and the references
there).
\begin{definition}
Given a framework $(G,\p)$ and its edge measurement vector $\ell$, 
define $A$
to be the space of $n\times n$ matrices $X$ such that 
$X_{ii} + X_{jj} - 2X_{ij} = \ell_{ij}$ for all edges $\{i,j\}\in E(G)$.
(Notice that $A$ is affine.)

The \defn{graph realization semidefinite program} is 
\[
	\inf_X \{\iprod{X}{0} : X\in S^+_n\cap A\}	
\]
By treating $X$ as the Gram matrix of $\p$ we can recover $\p$ from $X$.

We say that the graph realization SDP \defn{succeeds on $(G,\p)$} if the 
only feasible points of the SDP for the associated problem correspond to 
configurations congruent to $\p$;
otherwise we say that it \defn{fails}.  (Remember that we will only 
get a numerical approximation to $\p$ from an SDP solver.)
\end{definition}
\begin{remark}
The presentation above follows that in 
\cite{Gortler-thurston2}. 
\end{remark}
The graph realization SDP is a convex relaxation of
the rank constraint \cite{schoen35} on a Gram matrix for a $d$-dimensional
point set.  As the description suggests, it is not difficult to 
implement, and, when it succeeds, will ``guess'' the 
correct dimension $d$.  When it fails, solvers based 
interior point methods \cite{PW00} will return a
higher dimensional solution.  Thus, it is interesting to 
know, from $G$ only, whether the SDP can succeed on any positive 
measure set of $\p$.

A connection to universal rigidity was made by 
Zhu, So, and Ye \cite{Ye-So} 
(building on work of So and Ye \cite{Ye-So-sensor}).  
\begin{theorem}
\label{thm:gt-sdp}
Let $(G,p)$ be a generic $d$-dimensional framework with edge 
measurement vector $\ell$.  The graph realization 
SDP succeeds on the graph realization instance given by $G$, 
$\ell$ and $d$ if and only if $(G,\p)$ is universally rigid.
\end{theorem}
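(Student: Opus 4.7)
The proof proposal is to set up a bijection between the SDP's feasible set and the set of frameworks $(G,\q)$ (in any dimension $D\le n$) whose squared edge-length vector is $\ell$, and then observe that the definition of universal rigidity of $(G,\p)$ is exactly the statement that every such $\q$ is congruent to $\p$, which matches the definition of ``SDP success.''

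First I would establish the dictionary. Any $X\in S^+_n$ admits a factorization $X=QQ^t$ with $Q$ an $n\times D$ matrix, where $D=\operatorname{rank}(X)\le n$. The rows $\q_1,\dots,\q_n$ of $Q$ form a configuration in $\RR^D$, and the SDP's affine constraint $X_{ii}+X_{jj}-2X_{ij}=\ell_{ij}$ translates exactly into $|\q_i-\q_j|^2=\ell_{ij}$ for every edge $\{i,j\}\in E(G)$. Conversely, every framework $(G,\q)$ in any $\RR^D$ with squared edge lengths $\ell$ produces a feasible Gram matrix $X=QQ^t\in S_n^+\cap A$ (zero-padded to $n$ columns if needed). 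Two configurations give the same $X$ iff they differ by an orthogonal transformation fixing the origin, while translates produce distinct Gram matrices, so the feasible set is actually somewhat larger than the set of congruence classes; this will not cause any trouble because both implications respect the equivalence.

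For the forward implication, suppose $(G,\p)$ is universally rigid. Let $X$ be any feasible point, and let $\q\in\RR^D$ be the configuration obtained from $X=QQ^t$ as above. Then $(G,\q)$ is a framework in $\RR^D$ with edge lengths $\ell$, so by universal rigidity $\q$ is congruent to $\p$. Hence every feasible $X$ corresponds to a configuration congruent to $\p$, which is precisely success. For the converse I would argue the contrapositive: if $(G,\p)$ is not universally rigid, there exist $D$ and a framework $(G,\q)$ in $\RR^D$ with edge lengths $\ell$ but $\q$ not congruent to $\p$. The Gram matrix $X=QQ^t$ is then a feasible point of the SDP that does not correspond to any configuration congruent to $\p$, so the SDP fails.

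The main ``obstacle'' here is really only bookkeeping rather than mathematical depth: the argument is essentially an unpacking of the definitions of ``universally rigid'' and ``succeeds,'' bridged by the factorization of PSD matrices into Gram matrices. The one subtle point to treat carefully is that congruence includes translations while distinct translates have distinct Gram matrices, but both implications are insensitive to this because the feasible set is automatically closed under the translation action, and success is defined per-orbit. Notably, the genericity of $\p$ is not used in either direction; it appears in the statement only to match standard hypotheses in the surrounding literature on when universal rigidity behaves stably under perturbation.
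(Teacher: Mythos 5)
Your proof is correct as a matter of unpacking the definitions that this paper uses, and you have correctly identified that the genericity hypothesis plays no role once the statement is phrased the way the paper phrases it. One small imprecision: a framework $(G,\q)$ in $\RR^D$ yields a Gram matrix $QQ^t$ that is automatically $n\times n$, so the ``zero-padded to $n$ columns'' aside is unnecessary (and if $D>n$ nothing needs to change either, since the rows still span at most an $n$-dimensional space).

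The more important thing to be aware of is that the paper does not prove Theorem~\ref{thm:gt-sdp} at all; it cites it as a result of Zhu, So and Ye (building on So--Ye), and it states it only after having \emph{defined} ``succeeds'' as ``every feasible $X$ corresponds to a configuration congruent to $\p$.'' With that definition in hand, the theorem becomes, as you observe, essentially a restatement of the definition of universal rigidity via the Gram-matrix dictionary — which is exactly your proof. The nontrivial content in the cited work lies elsewhere: it concerns what an actual SDP solver returns (a relative-interior, hence max-rank, feasible point) and when that point has rank $d$, and the analysis of that version of ``success'' does depend on genericity. The paper has deliberately folded that analysis into the chosen definition so that the connection to universal rigidity is transparent at the point where it is used. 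So your proof is valid for the statement as written, but you should not conclude that the underlying Zhu--So--Ye theorem is a tautology or that its genericity hypothesis is superfluous; the hypothesis disappears here only because the paper's definition of ``succeeds'' has already absorbed the hard part.
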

Combining Theorem \ref{thm:gt-sdp} with our Theorem \ref{thm:main},
we obtain.
\begin{corollary}\label{cor:sdp}
Let $G$ be a graph and fix a dimension $d$.  Then there is a Euclidean 
open set of frameworks $(G,\p)$ for which the graph realization 
semidefinite program succeeds if and only if $G$ is generically 
globally rigid.
\end{corollary}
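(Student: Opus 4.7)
My plan is to combine Theorem~\ref{thm:main} with Theorem~\ref{thm:gt-sdp}, using as the main bridge the observation that universal rigidity of a specific framework $(G,\q)$ implies the graph realization SDP succeeds on $(G,\q)$ directly, without any genericity assumption on $\q$.

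For the forward direction, assume $G$ is generically globally rigid in $\RR^d$. Theorem~\ref{thm:main} produces a framework $(G,\p)$ and a Euclidean open neighborhood $U$ of $\p$ such that every framework $(G,\q)$ with $\q\in U$ is infinitesimally rigid and super stable, hence universally rigid. I claim the SDP succeeds on every such $(G,\q)$. Indeed, set $\ell_{ij} := |\q_i-\q_j|^2$, and let $X$ be any matrix in $S^+_n\cap A$ feasible for the instance given by $G$, $d$, and $\ell$. Since $X$ is PSD of some rank $D\le n$, it is the Gram matrix of a configuration $\r=(\r_1,\ldots,\r_n)$ in $\RR^D$. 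The affine constraints $X_{ii}+X_{jj}-2X_{ij}=\ell_{ij}$ translate to $|\r_i-\r_j|^2=|\q_i-\q_j|^2$ for every edge $\{i,j\}$ of $G$, so $(G,\r)$ has the same edge lengths as $(G,\q)$ in $\RR^D$. Universal rigidity of $(G,\q)$ forces $(G,\r)$ to be congruent to $(G,\q)$, so $X$ corresponds to a configuration congruent to $\q$. Hence the SDP succeeds on $(G,\q)$ for every $\q\in U$, and $U$ is the required Euclidean open set.

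For the converse, suppose there is a Euclidean open set $U$ of configurations on which the SDP succeeds. Since generic configurations are dense in $\RR^{nd}$, $U$ contains a generic configuration $\p_0$. The SDP succeeds on $(G,\p_0)$, so Theorem~\ref{thm:gt-sdp} gives that $(G,\p_0)$ is universally rigid. Universal rigidity immediately implies global rigidity, so $(G,\p_0)$ is a generic, globally rigid framework in $\RR^d$, and therefore $G$ is generically globally rigid in $\RR^d$.

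The only nontrivial point is in the forward direction: Theorem~\ref{thm:gt-sdp} as stated applies only to generic frameworks, whereas we need SDP success at \emph{every} point of $U$, not only at the generic ones. The elementary Gram-matrix argument above bridges this gap by showing that the implication ``universally rigid $\Rightarrow$ SDP succeeds'' holds with no genericity hypothesis at all; everything else in both directions reduces to a direct appeal to Theorem~\ref{thm:main} or Theorem~\ref{thm:gt-sdp} together with density of generic configurations.
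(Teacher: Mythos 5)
Your proof is correct and follows exactly the route the paper indicates (which gives no details beyond ``Combining Theorem~\ref{thm:gt-sdp} with our Theorem~\ref{thm:main}''). You correctly notice, and correctly repair, the one real subtlety in that combination: Theorem~\ref{thm:gt-sdp} only applies to \emph{generic} frameworks, whereas the set of generic configurations in a Euclidean neighborhood is dense but never open, so one cannot simply invoke it pointwise on the whole neighborhood from Theorem~\ref{thm:main}; your elementary Gram-matrix argument showing that universal rigidity implies SDP success with no genericity hypothesis is precisely the bridge needed, and the converse via density of generic points plus Theorem~\ref{thm:necc} is likewise sound.
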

Since we know that universal rigidity is not a generic property,
this result is, in a sense, a tight description of which 
combinatorial types of framework the semidefinite programming 
algorithm succeeds on.  (For example, if we draw $\p$ from a 
continuous density, Corollary \ref{cor:sdp} implies that 
the semidefinite program has a positive probability of 
success  if and only if $G$ is generically globally rigid.)

Characterizing the graphs for which every generic $(G,\p)$ 
is universally rigid, and thus the semidefinite 
relaxation is tight with probability one, is 
an open problem.

%%%%%%%%%%%%%%%%
\newpage
\appendix{}
%%%%%%%%%%%%%%%%

%%%%%%%%%%%%%%%%
\section{Algebraic geometry background}
%%%%%%%%%%%%%%%%

Throughout this paper, we will be using some basic facts about real 
algebraic and semi algebraic sets.
Here we summarize some preliminaries 
from real
algebraic geometry, somewhat specialized to our particular case.  For
a general reference, see, for instance, the
books~\cite{Roy,RoyAlg}. Much of this is adapted
from~\cite{Gortler-Thurston}. We will spend a bit of time
dealing explicitly with some issues of defining fields 
and genericity
as these issues
are not fully covered in any single elementary text.

\begin{definition}
  \label{def:generic-gen}
   Let $\kk$ be a subfield of $\RR$.
  An (embedded) affine, real 
\defn{algebraic set} or \defn{variety}~$V$ 
  defined over $\kk$
is a subset of $\RR^n$ 
that can be 
  defined by a finite set of algebraic equations with coefficients in~$\kk$.

A \defn{Zariski open set} is a subset of $\RR^n$ defined by removing an
algebraic subset.

  A real algebraic set has a 
real \defn{dimension}
  $\dim(V)$, which we will define as the largest $t$ for which there
  is an open subset of~$V$, in the Euclidean topology, that is 
is a smooth $t$ dimensional smooth sub-manifold of $\RR^n$.

Any nested sequence of strict algebraic subsets must terminate
in a finite number of steps. 
(This is called the Noetherian property).
This means that if we continue to take strict algebraic subsets,
we must eventually be left with the empty set.

  An algebraic set is \defn{irreducible} 
if it is not the union of two proper
  algebraic subsets
  defined over $\RR$.

Any reducible algebraic set $V$ can be 
uniquely described as the union of 
a finite number of maximal irreducible algebraic subsets called
the \defn{components} of $V$. 

Any algebraic subset of an irreducible algebraic set must be of 
strictly lower dimension.

\end{definition}

\begin{lemma}
\label{lem:geomIrr}
If a real algebraic set $V$ is defined over $\kk$, a subfield 
of $\RR$, 
then
any of its components can be defined over a finite extension of $\kk$,
also a subfield of $\RR$.
\end{lemma}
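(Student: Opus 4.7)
The plan is Galois descent. First I would reduce to the case where $V$ is $\kk$-irreducible: since the decomposition of $V$ into $\RR$-components is a refinement of its decomposition into $\kk$-components (every $\RR$-component is contained in some $\kk$-component), it suffices to handle one $\kk$-irreducible piece at a time. So assume $V$ is $\kk$-irreducible.

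Next I would complexify. Let $\bar\kk\subseteq\CC$ be the algebraic closure of $\kk$ in $\CC$, and let $V_\CC\subseteq\CC^n$ be the complex variety cut out by the same defining equations that describe $V$. Decompose $V_\CC$ into its absolutely irreducible components $W_1,\ldots,W_r$ over $\bar\kk$. Because $V$ is defined over $\kk$, the Galois group $G=\mathrm{Gal}(\bar\kk/\kk)$ permutes the finite set $\{W_1,\ldots,W_r\}$. Each set-wise stabilizer is therefore a finite-index subgroup of $G$, and by Galois descent for ideals of $\bar\kk[x]$, an ideal fixed by a subgroup $H\le G$ is generated by polynomials with coefficients in the fixed field $\bar\kk^{H}$, which is a finite extension of $\kk$.

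Now I would bring in complex conjugation. Pick an involution $\sigma$ of $\bar\kk$ compatible with the inclusion $\bar\kk\hookrightarrow\CC$; it too permutes the $W_j$. The $\RR$-irreducible components of $V$ are in bijection with the $\langle\sigma\rangle$-orbits $O\subseteq\{W_1,\ldots,W_r\}$, because the defining ideal in $\bar\kk[x]$ of an $\RR$-component must be conjugation-invariant, forcing it to be of the form $\bigcap_{j\in O}I(W_j)$ for some $\sigma$-orbit $O$. For such an orbit let $H_O\le G$ be its set-wise stabilizer. By construction $H_O$ contains $\sigma$ and has finite index, so the fixed field $L_O:=\bar\kk^{H_O}$ is a finite extension of $\kk$ and lies in $\RR$ (elements fixed by $\sigma$ are real). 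The ideal $\bigcap_{j\in O}I(W_j)$ is $H_O$-invariant, hence by Galois descent it is generated by polynomials in $L_O[x]$, exhibiting the corresponding $\RR$-component as defined over the finite real extension $L_O$ of $\kk$.

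The main obstacle is the bookkeeping in the third paragraph: checking carefully that the $\RR$-components of $V$ do correspond bijectively to $\sigma$-orbits on the absolutely irreducible components (which uses that the real Zariski closure of the real points of $W_j\cup\sigma(W_j)$ recovers $W_j\cup\sigma(W_j)$ at the level of defining ideals), and invoking the standard statement that an $H$-invariant ideal in $\bar\kk[x]$ has generators in $\bar\kk^H[x]$. Everything else is formal Galois theory.
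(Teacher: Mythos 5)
Your strategy --- complexify, decompose into absolutely irreducible components, Galois-descend through the finite-index stabilizer, and use complex conjugation to force the descent field into $\RR$ --- is the right idea and is, in broad strokes, what the paper achieves by citing Whitney and the Stacks project. The gap is in your complexification. You take $V_\CC$, the zero set in $\CC^n$ of the \emph{given} defining equations, whereas the argument needs $V^*$, the complex Zariski closure of $V$ (the smallest complex algebraic set containing $V$), and these can differ. Take $\kk=\QQ$ and $V=\{(0,0)\}\subset\RR^2$ cut out by $x^2+y^2=0$: then $V_\CC$ is the union of the two conjugate lines $y=\pm ix$, while $V^*$ is the single point. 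The vanishing ideal of the lone $\RR$-component in $\bar\kk[x,y]$ is $(x,y)$, not $\bigcap_j I(W_j)=(x^2+y^2)$, so your claim that the defining ideal of an $\RR$-component ``must be of the form $\bigcap_{j\in O}I(W_j)$'' is false. For a case where even the claimed bijection fails, take $V\subset\RR^2$ cut out by $(x^2+1)y=0$: here $V=\{y=0\}$ is $\RR$-irreducible, yet $V_\CC$ has three absolutely irreducible components ($x=i$, $x=-i$, $y=0$) and hence two $\sigma$-orbits. You flag the bookkeeping of your third paragraph as the main obstacle, but the auxiliary claim you lean on there (that the real Zariski closure of the real points of $W_j\cup\sigma W_j$ recovers $W_j\cup\sigma W_j$ at the level of ideals) fails in exactly these examples.

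The repair is to run the descent on $V^*$ rather than $V_\CC$, but that needs precisely the two Whitney facts the paper invokes: (i) $V^*$ is itself defined over $\kk$ --- not automatic, since this concerns the full vanishing ideal $I(V)$ and not the given generators --- and (ii) the $\CC$-irreducible components of $V^*$ biject with the $\RR$-irreducible components of $V$, each being the complex Zariski closure of its own real locus and hence $\sigma$-fixed. Given (i) and (ii), every $\sigma$-orbit is a singleton, every stabilizer contains $\sigma$, and your finite-real-extension conclusion follows exactly as you intend. So you and the paper are close in spirit: the paper outsources (i), (ii), and the descent to Whitney and the Stacks project, whereas you carry out the descent by hand but feed it the wrong complexification.
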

\begin{proof}
Let us define $V^*$, the
\defn{complex Zariski closure} 
of $V$, to be 
the smallest algebraic subset of $\CC^n$, defined
by polynomials with complex coefficients, that contains $V$.

If $V$ is defined over $\kk$, so 
too is $V^*$~\cite[Lemma 6]{whitney}.
Each 
component of $V^*$
is defined over a 
subfield of the reals~\cite[Lemma 7]{whitney}. 
Each component of a real algebraic set $V$
is simply the real locus of a corresponding 
component of $V^*$~\cite[Lemma 7]{whitney}. 
Thus our Lemma reduces to 
understanding the defining field
of the components of $V^*$.

Meanwhile it is standard 
fact from 
scheme theory, that 
given a complex variety $V^*$ defined over  $\kk$, some subfield of 
$\CC$, its irreducible components are
themselves defined over some finite extension of $\kk$.
In particular, from~\cite[Tag 038I]{stacks-project}, it suffices to 
just look at the components that are irreducible 
when working over an algebraic 
closure of $\kk$.
Then~\cite[Tag 04KZ]{stacks-project}, tells us that each of these 
components is defined over some finite extension of $\kk$.

\bs{
Meanwhile, a complex variety defined over $\kk$
splits into components iff
it splits over some finite extension of $\kk$
(see~\cite[Prop 5.50]{gortz}).
Thus every component of 
a complex algebraic set defined over $\kk$, a subfield of $\CC$, 
can always be defined over some 
finite extension of $\kk$.}
%(see~\cite[Tag 04KZ]{stacks-project}).
%\qed
\end{proof}

\begin{definition}
  A \defn{semi-algebraic set}~$S$ defined over $\kk$ 
  is a subset of $\RR^n$ that can be defined by 
 a finite set of   algebraic
  equalities and inequalities with coefficients in $\kk$, as well as
a finite number of Boolean operations.
  A semi-algebraic set has a well defined (maximal) dimension~$t$  
which we will define as the largest $t$ for which there
  is an open subset of~$S$, in the Euclidean topology, that is 
a smooth $t$ dimensional sub-manifold of $\RR^n$.

Any algebraic set is also a semi-algebraic set.

A semi-algebraic set is comprised of a finite number of 
connected components~\cite[Theorem 2.4.4]{Roy}.

  The real  \defn{Zariski closure} of $S$ is the smallest
real   algebraic set defined over $\RR$
containing $S$. 

\bs{
(We may 
need to enlarge the  field to cut out  the smallest algebraic set
containing~$S$
but a finite extension will always suffice
In particular, the real Zariski closure of 
$S$ might be only a component
of the smallest algebraic set defined over $\kk$ that contains $S$.)
}

We call $S$ \defn{irreducible} if its real Zariski closure is
  irreducible.

A semi-algebraic set $S$ has the same real dimension as its
real Zariski closure 
(see~\cite[Prop 2.8.2]{Roy} or~\cite[Lemma 2]{stasica}).  
Thus if two irreducible semi-algebraic
sets of the same dimension have an intersection of that same dimension,
then their union must be irreducible.

Any reducible semi-algebraic set  $S$ can be 
uniquely described as the union of 
a finite number of maximal irreducible semi-algebraic sets  called
the \defn{components} of $S$. 
Each component of $S$ is the intersection of $S$ with a component of 
the real Zariski closure of $S$.
Thus, if $S$ is defined over $\kk$, than
any of its components can be defined over a finite extension of $\kk$.

The image of a  real semi-algebraic  (or algebraic set)
set under a polynomial or rational  map, all defined over $\kk$
is semi-algebraic and defined over $\kk$~\cite[Theorem 2.76]{RoyAlg}.
As a corollary to this, quantifiers can always be eliminated from any first-order
formula over the reals, involving polynomial equalities and
inequalities,
rendering its feasible set
semi-algebraic.

The image of a  real semi-algebraic
set under an injective polynomial or rational  map
has the same dimension as its domain (see~\cite[Prop 2.8.8]{Roy}).
The image of as irreducible real  semi-algebraic
set under a polynomial or rational  map
is  irreducible (see the proof of~\cite[Prop 2.8.6]{Roy}).

We call a point on $S$
\defn{smooth}  
if it has a neighborhood in $S$ 
that is a smooth sub-manifold of $\RR^n$ of dimension $\dim(S)$.
(In the semi-algebraic setting, any such smooth sub-manifold will also be a 
real analytic sub-manifold of $\RR^n$ (see~\cite[Prop 8.1.8]{Roy}).

If all points of $S$ are smooth, then $S$ is called smooth.

Any semi-algebraic set $S$ of dimension $t$,
defined over $\kk$ can be stratified into   the 
finite disjoint union of smooth semi-algebraic sets of various dimensions
defined over $\kk$. In the stratification, the Euclidean closure in $S$ of
one stratum consists of itself and some
lower
dimensional strata.
(See~\cite[Theorem 5.38]{RoyAlg} for 
a detailed description.) 

The smooth and non-smooth loci of 
points of $S$ form  semi-algebraic sets, also defined
over $\kk$ 
(the proof of semi-algebraicity in ~\cite{stasica}, 
shows how these loci can be defined using quantifier elimination,
which establishes that $\kk$ is a defining field).

\end{definition}

\begin{lemma}
\label{lem:singStrat}
Let $S$ be a semi-algebraic set of dimension $t$. Then its singular
locus has dimension $<t$.
\end{lemma}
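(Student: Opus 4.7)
The plan is to apply the stratification theorem stated immediately before the lemma, which decomposes $S$ as a finite disjoint union $S = \bigsqcup_{i=1}^N S_i$ of smooth semi-algebraic strata with the frontier condition $\overline{S_i} \subseteq S_i \cup \bigcup_{\dim S_j < \dim S_i} S_j$. I would partition the strata into \emph{top} strata (those with $\dim S_i = t$) and \emph{low} strata (those with $\dim S_i < t$); let $L$ be the union of the low strata, a semi-algebraic set of dimension $< t$.

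The central step is to show that any $p$ lying in a top stratum $S_i$ and not lying in $\overline{S_j}$ for any $j \ne i$ is a smooth point of $S$. Since the stratification is finite and each $\overline{S_j}$ is closed, I can pick a Euclidean neighborhood $V$ of $p$ disjoint from every $\overline{S_j}$ with $j \ne i$. Then $S \cap V = S_i \cap V$ is a smooth $t$-dimensional submanifold of $\RR^n$, so $p$ qualifies as smooth in $S$. Consequently
\[
\sing(S) \;\subseteq\; L \;\cup\; \bigcup_{i:\,\dim S_i = t}\; \bigcup_{j \ne i} \bigl(S_i \cap \overline{S_j}\bigr).
\]

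To bound the right-hand side, I would use the frontier condition. If $\dim S_j = t$ and $j \ne i$, then $\overline{S_j} \subseteq S_j \cup (\text{strata of dimension} < t)$, and disjointness of the stratification forces $S_i \cap \overline{S_j} = \emptyset$. Hence only low $S_j$ contribute, and each contribution has dimension at most $\dim \overline{S_j} = \dim S_j < t$. Assembling everything, $\sing(S)$ is contained in a finite union of semi-algebraic sets of dimension strictly less than $t$, and so $\dim \sing(S) < t$.

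The main obstacle is really just careful bookkeeping around the stratification, in particular making sure that the frontier condition is applied in the right direction and that disjointness rules out collisions between distinct top strata. There are no serious analytic or geometric difficulties, because once the stratification is in hand, the local structure of $S$ at a point of a top stratum is entirely controlled by which other strata have that point in their closures.
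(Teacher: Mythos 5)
Your proof is correct and takes essentially the same route as the paper: invoke the smooth stratification with the frontier condition and observe that top-dimensional strata can contain only smooth points of $S$. The paper's proof is a single sentence ("points that are not smooth cannot lie in a top-dimensional stratum"), and your argument is the detailed version of exactly that. One small simplification you could make: the frontier condition actually forces $S_i \cap \overline{S_j} = \emptyset$ for \emph{every} $j \ne i$ when $\dim S_i = t$, not only when $S_j$ is another top stratum. Indeed $\overline{S_j} \subseteq S_j \cup \bigl(\text{strata of dimension} < \dim S_j\bigr)$, and every stratum appearing on the right has dimension $\le \dim S_j \le t$ while being distinct from $S_i$ (either because it equals $S_j \ne S_i$ or because its dimension is strictly less than $t$); disjointness of the stratification then gives emptiness. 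So the extra double union you wrote down is vacuous and $\sing(S) \subseteq L$ outright, which is what makes the paper's one-line statement tight. Your version, bounding the dimension of those intersections by $\dim \overline{S_j} = \dim S_j < t$ instead of noticing they vanish, is slightly longer but equally valid.
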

\begin{proof}
In any smooth stratification of $S$, points that are not smooth 
cannot lie in a top-dimensional stratum. 
(See also~\cite[Prop 5.53]{RoyAlg}).
%\qed
\end{proof}

\begin{lemma}
\label{lem:smoothIrr}
Let $S$ be a smooth and connected semi-algebraic set.
Then its real Zariski closure is irreducible.
\end{lemma}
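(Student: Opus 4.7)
The plan is to argue by contradiction. Suppose the real Zariski closure $V$ of $S$ is reducible, with distinct irreducible components $V_1,\dots,V_k$ where $k\ge 2$, and let $t := \dim(S) = \dim(V)$. First I would observe that for every $i\ne j$, $V_i\cap V_j$ is a proper algebraic subset of each of the irreducible sets $V_i$ and $V_j$, so
\[
\dim(V_i\cap V_j) \;<\; \min(\dim V_i,\dim V_j) \;\le\; t.
\]

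Next, around any $q\in S$ I would choose a connected Euclidean neighborhood $U_q$ of $q$ in $S$ that is a real-analytic manifold of dimension $t$; this is possible because $S$ is smooth everywhere, and as noted just above the lemma such smooth semi-algebraic sub-manifolds of $\RR^n$ are automatically real-analytic. Each $V_i$ is cut out by finitely many polynomial equations, so $U_q\cap V_i$ is the common zero set within $U_q$ of finitely many real-analytic functions. By the identity theorem for real-analytic functions on the connected manifold $U_q$, each such zero set is either all of $U_q$ or is a proper analytic subvariety, hence nowhere dense in $U_q$. Since $U_q \subseteq V = V_1\cup\cdots\cup V_k$ and a finite union of nowhere dense sets is nowhere dense (Baire), at least one $V_i$ must satisfy $U_q\subseteq V_i$.

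Then I would define $S_i := \{q\in S : U_q\subseteq V_i \text{ for some Euclidean neighborhood } U_q \text{ of } q \text{ in } S\}$. Each $S_i$ is open in $S$, and the previous paragraph yields $S = \bigcup_i S_i$. These sets are pairwise disjoint: if some $q$ had small neighborhoods sitting in both $V_i$ and $V_j$ for $i\ne j$, their intersection would be a nonempty open subset of the $t$-manifold $S$ contained in $V_i\cap V_j$, contradicting the dimension bound from the first paragraph. By connectedness of $S$, exactly one $S_i$ is nonempty, so $S\subseteq V_i$ for that $i$. But then the real Zariski closure of $S$ lies in $V_i\subsetneq V$, contradicting that $V$ itself is the real Zariski closure of $S$.

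The main obstacle is the dichotomy in the second step: a polynomial that vanishes on a Euclidean-open subset of $U_q$ vanishes identically on $U_q$. This is exactly where the real-analytic structure on $U_q$ (guaranteed by smoothness plus the semi-algebraic setting) and the identity theorem enter; all the remaining steps are bookkeeping with connectedness and dimensions.
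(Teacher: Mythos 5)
Your proof is correct in substance, and it supplies a genuine argument where the paper simply cites~\cite[Prop.~8.4.1]{Roy}. The engine is exactly the right one: locally $S$ is a connected real-analytic submanifold, so a polynomial that vanishes on an open piece of $S$ vanishes on the whole local piece by the identity theorem; this gives the dichotomy ``$U_q\subseteq V_i$ or $U_q\cap V_i$ is nowhere dense,'' and then the open cover $\{S_i\}$ plus connectedness forces $S$ into a single component, contradicting that $V$ is the Zariski closure.

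One small spot where you should add a word: in the disjointness step you conclude a contradiction because a nonempty open subset $W$ of the $t$-manifold $S$ sits inside $V_i\cap V_j$, which has dimension $<t$. As written, $W$ need not be semi-algebraic, so the paper's notion of dimension (and the rule that a semi-algebraic subset of an algebraic set has no larger dimension) does not apply to it directly. The fix is immediate and worth stating: pick a point $p\in W$ and a small open Euclidean ball $B$ about $p$ with $B\cap S\subseteq W$; then $B\cap S$ is semi-algebraic, open in $S$, hence a smooth $t$-submanifold of dimension $t$, and $B\cap S\subseteq V_i\cap V_j$ then yields $t\le\dim(V_i\cap V_j)<t$. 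With that patched, the argument is complete and self-contained, and is a reasonable substitute for the citation in the paper.
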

(See~\cite[Prop 8.4.1]{Roy}.

\begin{lemma}
\label{lem:compSmooth}, 
Let $S$ be a smooth  semi-algebraic set.
Then each of its irreducible components is smooth.
\end{lemma}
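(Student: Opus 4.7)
The plan is to decompose $S$ into its (finitely many) Euclidean connected components $U_1,\dots,U_m$ and apply Lemma~\ref{lem:smoothIrr} to each, matching connected components up with irreducible components of $\bar S$. Each irreducible component $S_i$ of $S$ will then be seen to be a disjoint union of some of the $U_j$'s, from which smoothness follows because smoothness is local.

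Since $S$ is smooth of dimension~$d$, each connected component $U_j$ is smooth, connected, and $d$-dimensional, so Lemma~\ref{lem:smoothIrr} makes $\bar U_j$ irreducible of dimension~$d$. Writing $\bar S = \bar U_1 \cup \cdots \cup \bar U_m$ and using that no $\bar U_j$ can be a proper algebraic subset of another of the same dimension (proper subsets of an irreducible algebraic set are strictly lower-dimensional), after removing duplicates we find that the $\bar U_j$ are precisely the irreducible components $C_1,\dots,C_k$ of $\bar S$. Thus each $U_j$ is contained in exactly one $C_{\ell(j)}$, namely $\bar U_j$.

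It remains to identify each $S_i = S \cap C_i$ with the union of those $U_j$ having $\ell(j) = i$. One inclusion is immediate. For the other, one has to show that whenever $\ell(j) \neq i$, the intersection $U_j \cap C_i$ is empty; \emph{a priori} it is only contained in $C_{\ell(j)} \cap C_i$, a proper, hence strictly lower-dimensional, subvariety of $C_{\ell(j)}$. I expect this disjointness step to be the main obstacle, since it requires using the smoothness of $S$ to rule out a smooth $d$-dimensional sheet of $S$ passing through a point lying on a strictly lower-dimensional intersection of two distinct irreducible components of $\bar S$. Once this is established, $S_i$ is a finite disjoint union of smooth $d$-dimensional manifolds $U_j$, and is therefore smooth (smoothness being purely local), completing the proof.
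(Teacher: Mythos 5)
Your decomposition into Euclidean connected components $U_j$, the appeal to Lemma~\ref{lem:smoothIrr}, and the identification of the closures $\bar U_j$ with the components $C_i$ of $\bar S$ all match the paper's argument, and you are right to single out the remaining disjointness claim as the non-trivial step — the paper simply asserts it without proof. But the gap you flag is genuine and cannot be closed: smoothness of $S$ does \emph{not} prevent a smooth sheet $U_j$ from passing through a point that lies on a different component $C_i$ of $\bar S$.

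Concretely, in $\RR^2$ take $S = \{y = x\} \cup \{y = -x,\ x > 1\}$. Then $S$ is smooth of dimension $1$ (its two connected components, a line and an open ray, are Euclidean-separated). Its real Zariski closure $\{y^2 = x^2\}$ has components $C_1 = \{y=x\}$ and $C_2 = \{y=-x\}$, but $S \cap C_2 = \{(0,0)\} \cup \{y = -x,\ x>1\}$ has the origin as an isolated point in a one-dimensional set, hence is \emph{not} smooth. Here $U_1 = \{y=x\}$ is a smooth one-dimensional sheet of $S$ passing through the crossing point $(0,0) \in C_1 \cap C_2$ — exactly the configuration you conjectured smoothness of $S$ should rule out. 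It doesn't, because smoothness of $S$ at $(0,0)$ is a local statement about $U_1$ alone and cannot detect that $(0,0)$ also lies on $C_2$. The paper's own proof relies on the unsupported assertion that ``each of the irreducible components of $S$ consists exactly of those connected components that have a common real Zariski closure,'' which is precisely what fails above: $S\cap C_2$ strictly contains the connected component $\{y=-x,\ x>1\}$, and is a maximal irreducible subset of $S$, yet is not a union of connected components of $S$. What does survive (and is likely what the downstream stratification argument actually needs) is the weaker statement that $S$ is a \emph{finite union} of smooth irreducible semi-algebraic sets — the connected components themselves suffice, by Lemma~\ref{lem:smoothIrr} — but those need not be the maximal irreducible subsets, and the maximal irreducible subsets $S\cap C_i$ need not be smooth.
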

\begin{proof}
From Lemma~\ref{lem:smoothIrr}, if $B$ is smooth and connected, then it
must, itself be irreducible. 

In general,  $S$ 
might consist of
some finite number of disjoint connected components. In this case, each
of the irreducible components of $S$ consists exactly of those connected
components that have a common real Zariski closure. 

Thus an irreducible component is the union of disjoint 
smooth semi-algebraic sets
and is thus smooth.
\end{proof}

\bs{
\begin{lemma}
\label{lem:smoothIrr2}
Let $S$ be an irreducible 
semi-algebraic set of dimension $t$. Then its smooth locus is 
irreducible.
\end{lemma}
\begin{proof}
Let $W$ denote the smooth locus of $S$.
It has dimension $t$. 
Thus the real Zariski closure of $W$ 
must also be of dimension $t$ and be contained inside of
the real Zariski closure of $S$.
Since $S$ is irreducible, these two real Zariski closures must agree, making
$W$ irreducible. 
\end{proof}
}

\begin{lemma}
\label{lem:zclose}
Let $S$ be a semi-algebraic set of dimension $t$
defined over $\kk$. Then the real
Zariski closure of $S$ is defined (as a variety) over a finite extension
of $\kk$.
\end{lemma}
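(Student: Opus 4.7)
The plan is to express $V$, the real Zariski closure of $S$, as a finite union of real-irreducible components of a larger $\kk$-defined real algebraic set, and then appeal to Lemma~\ref{lem:geomIrr}.

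To set things up, let $I(S)_\kk := \{f \in \kk[x_1,\ldots,x_n] : f|_S = 0\}$ and define $V_\kk$ to be its zero locus in $\RR^n$.  Then $V_\kk$ is a real algebraic set defined over $\kk$ containing $S$, and hence containing $V$. By Lemma~\ref{lem:geomIrr}, $V_\kk$ decomposes into real-irreducible components $V_1,\ldots,V_r$, each defined over some finite extension $\kk_i$ of $\kk$. If one can show that $V$ is the union of some subset of the $V_i$, then $V$ is defined over the compositum of the corresponding $\kk_i$, a finite extension of $\kk$, and the proof is complete.

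To establish that $V$ is such a union, I would stratify $S$ into finitely many smooth connected semi-algebraic strata $T_\alpha$ (applying the semi-algebraic stratification theorem from the discussion above, then passing to connected components, each defined over some finite extension of $\kk$). By Lemma~\ref{lem:smoothIrr}, each real Zariski closure $\overline{T_\alpha}$ is irreducible, so it lies in a unique component $V_{i(\alpha)}$ of $V_\kk$. Since $V = \bigcup_\alpha \overline{T_\alpha}$, the desired decomposition reduces to showing $\overline{T_\alpha} = V_{i(\alpha)}$ for each $\alpha$.

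This last equality is the principal obstacle, and I would prove it by contradiction using the minimality of $V_\kk$ as the smallest $\kk$-defined real algebraic set containing $S$. Suppose $\overline{T_\alpha} \subsetneq V_{i(\alpha)}$. Pick a finite Galois extension $L/\kk$ containing every $\kk_i$, and consider the set obtained by replacing the $\mathrm{Gal}(L/\kk)$-orbit of $V_{i(\alpha)}$ inside $V_\kk$ with the $\mathrm{Gal}(L/\kk)$-orbit of $\overline{T_\alpha}$. The result is $\kk$-defined, strictly contained in $V_\kk$, and still contains $S$ (every stratum of $S$ is covered by some component in the modified union), contradicting the minimality of $V_\kk$. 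The delicate part is verifying that the Galois conjugates of $\overline{T_\alpha}$ are genuine real algebraic subsets of the corresponding conjugate components of $V_{i(\alpha)}$, and that the replacement produces a strict subset of $V_\kk$; this descent argument is the main technical point that requires care.
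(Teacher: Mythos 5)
Your approach has a genuine gap, and in fact the central intermediate claim is false as stated.

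The reduction ``the desired decomposition reduces to showing $\overline{T_\alpha} = V_{i(\alpha)}$ for each $\alpha$'' is too strong.  Lower-dimensional strata need not have closures that are components of $V_\kk$.  Consider $S = \{x^2 + y^2 \le 1\}\subset \RR^2$ with $\kk=\QQ$: its strata are the open disk and the boundary circle, while $I(S)_\kk = (0)$, so $V_\kk = \RR^2$ has a single component.  The closure of the circle stratum is the circle, which is a strict algebraic subset of the unique component $\RR^2$, so your intermediate claim fails for that stratum --- even though the final conclusion (that $V=\RR^2$ is a union of components of $V_\kk$) still holds.  Your Galois replacement argument then also breaks down in exactly this situation: if you replace the orbit of $\RR^2$ by the orbit of the circle, the resulting set no longer contains $S$, so there is no contradiction with minimality of $V_\kk$.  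The assertion ``the result \ldots still contains $S$'' is simply unjustified and in general false.  There is also a circularity lurking in ``the $\mathrm{Gal}(L/\kk)$-orbit of $\overline{T_\alpha}$'': to form that orbit one needs $\overline{T_\alpha}$ to be defined over an algebraic extension of $\kk$, which is precisely the content of the lemma being proved.

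The paper's proof takes a simpler and more direct route.  It reduces, via the same smooth stratification into connected pieces, to the case where $S$ is smooth and connected, so that $V = \overline{S}$ is irreducible of dimension $s := \dim S$ (Lemma~\ref{lem:smoothIrr}).  The crucial extra ingredient, which you do not invoke, is the cited fact (\cite[Lemma~2]{stasica}) that a $\kk$-definable semi-algebraic set $S$ of dimension $s$ lies inside a $\kk$-defined algebraic set $W$ of the \emph{same} dimension $s$.  Once one has that, the conclusion is immediate: $V\subseteq W$, $V$ is irreducible of dimension $s = \dim W$, hence $V$ is a top-dimensional irreducible component of $W$, and Lemma~\ref{lem:geomIrr} gives the finite field extension.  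Without control on the dimension of the ambient $\kk$-defined set (your $V_\kk$ could in principle be larger-dimensional than $t$, absent exactly this cited result), a dimension-counting argument is unavailable, and your Galois descent does not substitute for it.
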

\begin{proof}
Due to the finite stratification, we can assume that $S$ is smooth 
and connected, 
of some dimension $s$. (Then we can just take the finite 
union over these strata, as the closure of their union is the union
of their closures.)

From Lemma~\ref{lem:smoothIrr}, the real Zariski closure $V$, of $S$
is irreducible and of dimension $s$. 
Meanwhile $S$ must be contained in some algebraic set $W$, that has
dimension $s$ and is defined over $\kk$
(see~\cite[Lemma 2]{stasica}). As the real Zariski closure of $S$
must be contained
in any algebraic set containing $S$,
we must have $V \subset W$.
But since $V$ and $W$ have the same dimension, $V$ must be a
(maximal) 
component 
of
$W$ (any algebraic set that is a strict subset of an irreducible
component of $W$ must be of lower dimension).
 From Lemma~\ref{lem:geomIrr}
components of algebraic sets can always
be defined using a finite
extensions, thus we are done.
\end{proof}

\begin{definition}
Let  $\kk$ 
be a countable subfield of $\RR$.
  A point in
  an irreducible  (semi-)algebraic set~$V$ defined over~$\kk$
is
  \defn{generic} if its
  coordinates do not satisfy
  any algebraic equation with coefficients in~$\kk$
  besides those that are satisfied by every
  point on~$V$ (such equations are called trivial).

A point that satisfies some non-trivial algebraic  equation with 
coefficients in $\kk'$, some finite extension of $\kk$, 
will always also satisfy 
some non-trivial algebraic equation with coefficients in~$\kk$
(see e.g.~\cite[Lemma 23]{cgr}).
Thus a point will remain generic when a  finite field extension
is applied to the defining field, which might occur, say, when passing from
a semi-algebraic set to its real Zariski closure or when splitting an algebraic set into
its components.

Almost every point in an irreducible (semi-)algebraic set $V$ defined over
$\kk$
is generic. 
 \end{definition}

\begin{lemma}
\label{lem:genSmooth}
Every generic point of an irreducible (semi-)algebraic set,
defined over a countable field $\kk$,
 is smooth.
\end{lemma}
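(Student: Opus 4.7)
The plan is to show that the singular locus of an irreducible (semi-)algebraic set $V$ defined over $\kk$ is contained in a proper algebraic subset of $V$, and then apply the defining property of generic points. The sub\-algebraic nature of ``smooth'' (together with elimination of quantifiers) already tells us that $\sing(V)$ is semi-algebraic and defined over $\kk$; from there, it is a matter of bookkeeping with defining fields.

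First, invoke Lemma~\ref{lem:singStrat} to see that $\dim(\sing(V)) < \dim(V)$. Next, let $W$ be the real Zariski closure of $\sing(V)$. By Lemma~\ref{lem:zclose}, $W$ is a real algebraic set defined over some finite extension $\kk'$ of $\kk$, and it has the same dimension as $\sing(V)$, hence $\dim(W) < \dim(V)$. Since $V$ is irreducible and $W$ is an algebraic set of strictly smaller dimension, $W$ cannot contain $V$; equivalently, if $f_1,\ldots,f_r\in\kk'[x_1,\ldots,x_n]$ are defining polynomials for $W$, at least one of them, say $f_j$, fails to vanish identically on $V$. In other words, $f_j=0$ is a non-trivial algebraic equation on $V$ with coefficients in~$\kk'$.

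Now let $x\in V$ be a generic point. By definition, $x$ satisfies no non-trivial algebraic relation with coefficients in $\kk$; by the remark that genericity is preserved under finite field extensions (cited just before the lemma), $x$ likewise satisfies no non-trivial algebraic relation with coefficients in $\kk'$. In particular, $f_j(x)\ne 0$, so $x\notin W$, and hence $x\notin\sing(V)$. This means $x$ is a smooth point of $V$, which is exactly what we wanted.

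The only potentially subtle step is the control of the defining field when passing from $\sing(V)$ to its real Zariski closure $W$; this is precisely what Lemma~\ref{lem:zclose} supplies, and it is the reason we built in the ``genericity is invariant under finite extension'' clause into the definition. Everything else is a direct combination of the stratification/dimension statements already assembled above.
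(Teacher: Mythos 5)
Your proof is correct and follows essentially the same route as the paper's: use Lemma~\ref{lem:singStrat} to bound the dimension of the singular locus, pass to its real Zariski closure to obtain an algebraic set defined over a finite extension of $\kk$, and conclude that a generic point (which avoids all non-trivial relations even over finite extensions, per the definition) cannot lie there. You have merely spelled out the steps the paper compresses into two sentences, including the explicit invocation of Lemma~\ref{lem:zclose} for the defining-field bookkeeping.
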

\begin{proof}
From Lemma~\ref{lem:singStrat}, any non-smooth point lies in a lower
dimensional semi-algebraic set defined over $\kk$,
which 
remains so after a Zariski
closure. Thus these points must satisfy some extra equation,
defined over a finite extension of $\kk$, 
that is 
non-trivial over $S$.
\end{proof}

\begin{lemma}
\label{lem:genDense}
Let $V$ be an irreducible smooth (semi-)algebraic set, defined over 
a countable field $\kk$.
Then its generic points 
 are (Euclidean) dense in $V$.
\end{lemma}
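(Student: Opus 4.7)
The plan is a Baire category argument, exploiting the countability of $\kk$. Since $\kk$ is countable, there are only countably many polynomials $f\in \kk[x_1,\ldots,x_n]$. Call such an $f$ \emph{non-trivial on $V$} if $f$ does not vanish identically on the real Zariski closure $\bar V$ of $V$ (equivalently, not identically on $V$, since $V$ is Euclidean-dense in $\bar V$ by Lemma~\ref{lem:smoothIrr}-style reasoning). By the definition of generic, a point $p\in V$ fails to be generic precisely when $f(p)=0$ for some such non-trivial $f$, so the non-generic locus is the countable union
\[
  \bigcup_{f \text{ non-trivial on } V}\bigl(V \cap \{f = 0\}\bigr).
\]

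For each such $f$, the set $V\cap\{f=0\}$ is closed in $V$ in the Euclidean topology and is contained in $\bar V\cap\{f=0\}$, which is a proper algebraic subset of the irreducible algebraic set $\bar V$. By the dimension property recorded in Definition~\ref{def:generic-gen}, any proper algebraic subset of an irreducible algebraic set has strictly smaller dimension, so $\dim(\bar V\cap\{f=0\}) < \dim \bar V = \dim V$. Because $V$ is, by hypothesis, a smooth sub-manifold of $\RR^n$ of dimension $\dim V$, no subset of strictly smaller dimension can contain a non-empty relatively-open subset of $V$; hence each $V\cap\{f=0\}$ is nowhere dense in $V$.

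Finally, as a smooth sub-manifold of $\RR^n$, the set $V$ is locally homeomorphic to a Euclidean ball, so it is a Baire space in its Euclidean subspace topology. Applying the Baire category theorem to the countable collection of closed nowhere-dense sets $V\cap\{f=0\}$ shows that their union has empty interior in $V$, and therefore the complementary set of generic points is Euclidean-dense in $V$. The only point that requires care is the passage from ``proper algebraic subset of $\bar V$'' to ``strictly lower real dimension,'' which is precisely where irreducibility of $V$ (i.e., of $\bar V$) is used; this is already recorded in the preliminaries, so no additional work is needed.
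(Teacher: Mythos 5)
Your proof is correct and follows essentially the same route as the paper: for each of the countably many non-trivial polynomials over $\kk$, the zero locus meets $V$ in a lower-dimensional, hence nowhere-dense, closed subset, and the Baire category theorem applied to the smooth (hence locally Euclidean, hence Baire) space $V$ gives density of the generic points. The only cosmetic difference is that you justify nowhere-density by noting a lower-dimensional set cannot contain a relatively open subset of a manifold, whereas the paper invokes a finite stratification of the zero set into low-dimensional smooth manifolds; the two observations are equivalent.
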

\begin{proof}
Let $\phi$ be any non-zero algebraic function on $V$. Its zero set 
is closed and of dimension lower than that of $V$ and thus is 
stratified as a union 
of finite number of smooth manifolds, each with dimension
less than that of $V$. Since $V$ is a smooth manifold, 
$V_\phi$,  the complement of 
this zero set is open 
and dense (in the subspace topology) 
in $V$.
The generic points are the intersection of $V_\phi$ as $\phi$ ranges
over the countable set of all possible $\phi$ defined over $\kk$.
Since $V$ is a Baire space, such a countable intersection of
open and dense subsets must itself be a dense subset.
\end{proof}

\begin{lemma}
  \label{lem:image-generic}
  Let $V\!$ and $W\!$ be irreducible semi-algebraic sets and $f : V \to W$
  be a surjective polynomial or rational map, all defined over~$\kk$,
  a countable subfield of $\RR$.
Then if $x
  \in V\!$ is generic, $f(x)$ is generic inside~$W\!$.
\end{lemma}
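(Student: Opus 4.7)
The plan is to prove the contrapositive: assume $y := f(x)$ is not generic in $W$ and produce a non-trivial algebraic equation over $\kk$ satisfied by $x$, contradicting its genericity in $V$.

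By assumption there exists a polynomial $g$ with coefficients in $\kk$ such that $g(y) = 0$ but $g$ does not vanish identically on $W$; in particular, surjectivity of $f$ supplies some $x' \in V$ (in the domain of $f$, in the rational case) with $g(f(x')) \neq 0$. In the polynomial case, this is essentially the whole argument: $h := g \circ f$ is a polynomial with coefficients in $\kk$, satisfies $h(x) = g(y) = 0$, and is non-trivial on $V$ because $h(x') \neq 0$. Hence $x$ lies on a proper algebraic subset of $V$ defined over $\kk$, contradicting genericity.

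For the rational case there is a small extra bookkeeping step. Write $f = (p_1/q_1, \ldots, p_N/q_N)$ with $p_i, q_i \in \kk[t_1, \ldots, t_n]$. After clearing denominators in $g(f(t))$ by multiplying through by a suitable monomial $Q(t) := \prod_i q_i(t)^{k_i}$, one obtains a polynomial $h \in \kk[t_1, \ldots, t_n]$ with $h(z) = Q(z) g(f(z))$ whenever $z$ is in the domain of $f$. The generic point $x$ must lie in the domain of $f$, since the complement of the domain is a proper algebraic subset of the Zariski closure of $V$ defined over $\kk$, and thus cuts out a non-trivial equation on $V$ that any generic point must avoid. Hence $Q(x) \neq 0$, so $h(x) = 0$; similarly, $Q(x') \neq 0$ for the $x'$ chosen above, so $h(x') \neq 0$, and $h$ is non-trivial on $V$. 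This again contradicts genericity of $x$.

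The one subtle point, and the only place where something could go wrong, is ensuring that the pulled-back equation is genuinely non-trivial on $V$; this is exactly where surjectivity of $f$ is used and is the reason the hypothesis is needed. Once surjectivity produces a witness $x'$ outside the zero locus of $h$, everything else is routine polynomial manipulation over $\kk$.
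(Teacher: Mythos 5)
Your proof is correct and takes essentially the same route as the paper: both arguments pull back a non-trivial algebraic function on $W$ to a non-trivial one on $V$ (surjectivity being the key), and observe that a generic point of $V$ cannot satisfy it. You phrase it as a contrapositive and spell out the denominator-clearing and domain-of-definition details for the rational case, which the paper leaves implicit, but the underlying idea is identical.
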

\begin{proof}
Consider any non-zero algebraic function~$\phi$ on~$W$
defined over~$\kk$.  Then 
$\phi(f(\cdot))$ is a function on~$V$ that is not identically
zero.  Thus if $x$ is a generic point in $V$, $\phi(f(x)) \ne
0$.  Since this is true for all~$\phi$, it follows that $f(x)$ is
generic.
\end{proof}

\begin{theorem}
\label{thm:fiber}
Let $S$ be an irreducible semi-algebraic set.
Let $\pi$ be a polynomial or rational
map from $S$ into $\RR^n$, for some $n$, all defined over $\kk$,
a countable subfield of $\RR$.
Let $s$ be a generic point in $S$, and $N(S)$ a sufficiently small
Euclidean neighborhood of $s$ in $S$. 
Then $\dim(S) = \dim(\pi(S)) + \dim(\pi^{-1}(\pi(s)) \cap N(s))$.
\end{theorem}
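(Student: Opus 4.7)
The plan is to prove this as the semi-algebraic analogue of the classical fiber dimension theorem, working locally at a smooth generic point where the constant rank theorem can be applied. The tools in the appendix already give us that a generic point of an irreducible semi-algebraic set is smooth (Lemma~\ref{lem:genSmooth}) and that generic points map to generic points under polynomial/rational maps (Lemma~\ref{lem:image-generic}), so the geometry near $s$ and near $\pi(s)$ is as nice as possible.

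First, I would set up the smooth local picture. Pick a generic $s \in S$; by Lemma~\ref{lem:genSmooth}, $s$ is smooth, so a small Euclidean neighborhood $N(s)$ of $s$ in $S$ is a real-analytic submanifold of $\RR^{n}$ of dimension $\dim(S)$. The image $\pi(S)$ is irreducible semi-algebraic (as the image of an irreducible semi-algebraic set under a rational map), and by Lemma~\ref{lem:image-generic} applied to the corestriction $\pi\colon S \to \pi(S)$, the point $\pi(s)$ is generic in $\pi(S)$, hence also smooth, so $\pi(S)$ is locally a real-analytic manifold of dimension $d := \dim(\pi(S))$ near $\pi(s)$.

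Next, I would determine the rank of the differential $d\pi_s\colon T_s S \to T_{\pi(s)} \RR^n$. Since $\pi$ maps a neighborhood of $s$ into $\pi(S)$, which is a $d$-dimensional manifold near $\pi(s)$, the image of $d\pi_s$ lies in $T_{\pi(s)} \pi(S)$, so $\operatorname{rank}(d\pi_s) \le d$. For the matching lower bound I would invoke Sard's theorem: the rank of $d\pi$ is lower semi-continuous, so the locus where it attains its maximum value $r$ is Zariski-open in the smooth locus of $S$; being non-empty and defined by polynomial conditions over $\kk$, it contains every generic point, so $\operatorname{rank}(d\pi_s) = r$. If $r < d$, then by Sard the image $\pi(N(s))$ has Lebesgue measure zero in $\RR^n$, which contradicts the fact that $\pi(s)$ lies in the interior of the $d$-dimensional manifold piece of $\pi(S)$ that must be covered (up to a lower-dimensional semi-algebraic set) by the images of smooth points, using the density statement of Lemma~\ref{lem:genDense}. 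Hence $r = d$.

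Finally, I would apply the constant rank theorem in a neighborhood of $s$ (where $d\pi$ has locally constant rank $d$). This gives local real-analytic coordinates in which $\pi$ is the standard projection $\RR^{\dim(S)} \to \RR^{d}$, so $\pi^{-1}(\pi(s)) \cap N(s)$ is a smooth submanifold of dimension $\dim(S) - d$. Putting the pieces together yields
\[
\dim(S) \;=\; d + (\dim(S) - d) \;=\; \dim(\pi(S)) + \dim\bigl(\pi^{-1}(\pi(s)) \cap N(s)\bigr),
\]
as claimed. The main obstacle is the rank argument: one must control the rank of $d\pi$ on an open neighborhood of the generic point (not just at $s$) so that the constant rank theorem actually applies. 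Once semi-continuity of rank plus the Sard contradiction pin down this rank to be exactly $\dim(\pi(S))$ on a Zariski-open set containing $s$, the rest is a routine appeal to smooth local normal forms.
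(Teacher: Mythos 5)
Your overall strategy matches the paper's: work at a smooth generic point $s$, note that $d\pi$ has locally constant rank there, and apply the constant rank theorem to split $\dim(S)$ into the dimension of the local image plus the local fiber dimension. The setup through Lemma~\ref{lem:genSmooth} and Lemma~\ref{lem:image-generic} is fine. The gap is in the step that is supposed to show the constant rank $r$ equals $d := \dim(\pi(S))$ (equivalently, that $\dim(\pi(N(s))) = \dim(\pi(S))$). The invocation of Sard's theorem does not do the work you want: that $\pi(N(s))$ has measure zero in $\RR^n$ is true for trivial reasons whenever $r<n$ and is no contradiction, since $\pi(S)$ itself typically has measure zero in $\RR^n$. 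The intended contradiction is with the $d$-dimensional local structure of $\pi(S)$ near $\pi(s)$, and you try to reach it by claiming that this $d$-dimensional piece is covered, up to lower dimension, by $\pi$-images of smooth points. That claim is not justified: the non-smooth locus $S'\subset S$ has $\dim(S')<\dim(S)$, but since $\dim(\pi(S))$ may be strictly less than $\dim(S)$, you only get $\dim(\pi(S'))<\dim(S)$, which does \emph{not} imply $\dim(\pi(S'))<\dim(\pi(S))$. So the image of the bad locus could still be full-dimensional in $\pi(S)$, and Lemma~\ref{lem:genDense} (density of generic points in $S$) does not give a dimension bound on its image.

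The paper closes this step without Sard, using irreducibility directly. If $\dim(\pi(N(s)))<\dim(\pi(S))$, then the real Zariski closure of $\pi(N(s))$ is a proper algebraic subset of that of $\pi(S)$, so some polynomial $\phi$ vanishes on $\pi(N(s))$ but not identically on $\pi(S)$. Then $\phi\circ\pi$ vanishes on $N(s)$, a full-dimensional Euclidean-open subset of the irreducible set $S$; hence $\phi\circ\pi\equiv 0$ on $S$, so $\phi\equiv 0$ on $\pi(S)$, a contradiction. Replacing your Sard/covering step with this Zariski-closure argument makes the proof complete; in fact you then need not separately identify $r$ with $d$, since the constant rank theorem already gives $\dim(N(s))=\dim(\pi(N(s)))+\dim\bigl(\pi^{-1}(\pi(s))\cap N(s)\bigr)$, and it only remains to show $\dim(\pi(N(s)))=\dim(\pi(S))$.
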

\begin{proof}
From genericity and Lemma~\ref{lem:genSmooth},
$N(s)$ is smooth, and the rank of the linearization, $\pi^*$
is of constant rank. Thus by the constant rank theorem, we have
$\dim(N(s)) = \dim(\pi(N(s))) + \dim(\pi^{-1}(\pi(s)) \cap N(s))$.

Since $N(s)$ is smooth, we have $\dim(N(s)) = \dim(S)$.

Next we argue that $\dim(\pi(N(s)))=\dim(\pi(S))$, as follows.
If $\dim(\pi(N(s)))$ were smaller than $\dim(\pi(S))$, then 
the  semi-algebraic
set, $\pi(N(s))$  could be cut out from  $\pi(S)$
by a  non-trivial algebraic equation (as the real Zariski closure of 
$\pi(N(s))$ would be of lower dimension than that of the real Zariski closure of 
$\pi(S)$).
This then  means that $N(s)$ could be cut
out of $S$ by a non-trivial algebraic equation. But a full dimensional subset
cannot be cut out from an irreducible semi-algebraic set by an 
algebraic equation that doesn't identically vanish.
\end{proof}
\begin{remark}
Indeed 
it also can be shown (say using algebraic 
Sard's theorem on a smooth stratification of $S$) 
that at generic $s$,
$\dim(S) = \dim(\pi(S)) + \dim(\pi^{-1}(\pi(s)))$, which also means that
$\dim(\pi^{-1}(\pi(s))) = \dim(\pi^{-1}(\pi(s)) \cap N(s))$. 
But we will not need this.
\end{remark}

\section{Rational maps to kernels of matrices}
A number of times in this paper, we will have some algebraic set
$S$  of 
$n_1$-by-$n_2$ 
matrices,
and we will want to construct a map takes an  $\M \in S$ to some vector
in the kernel of $\M$. Let $r$ be the maximal rank over the matrices in $S$.
Here we will outline the general procedure and then
work out the specific maps that are used in this paper.

We  start by choosing some matrix $\M^0 \in S$ with rank $r$.
We then select $r$ rows of $\M^0$ that are linearly independent.
We then find an $(n_2-r)$-by-$n_2$ matrix $\H$ (with entries in $\QQ$) 
such that the selected $r$ rows of
$\M^0$ together 
with the added rows from $\H$, form a non-singular matrix.

For any $\M \in S$, let $\M'$ be the square matrix obtained by using the
same chosen
$r$ rows from $\M$, vertically 
appended with the matrix $\H$ chosen above. The matrix
$\M'$ can only
be singular over some strict subvariety of $S$. 
(When $S$ is irreducible,
singularity can only happen for non-generic $\M$.)

Given any vector $\x \in \RR^{n_2-r}$, we define the vector 
$\v(\x) \in \RR^{n_2}$ as a vector with $r$ leading zeros appended to 
$\x$.
We now define a rational map from $S\times\RR^{n_2-r} \rightarrow \RR^m$
which maps $(\M,\x) \mapsto (\M')^{-1}\v(\x)$. 
Clearly, this map can be expressed using rational functions of the
coordinates of $\M$ and $\x$.
The map is not defined wherever
$\M'$ is singular. Wherever the map is defined, its maps to some vector
in the kernel of $\M$. For a fixed $\M$, the map is linear and injective
over $\x$. 

This procedure can be used to construct
a bundle of matrices together with kernel vectors.

\begin{lemma}
\label{lem:bundle0}
Let $S$ be an $d_1$-dimensional irreducible semi-algebraic set of 
$n_1$-by-$n_2$ matrices all of rank 
$r$.
And let  $d_2:=n_2-r$ be the kernel dimension.
Let $B$ be the bundle $(\M,\y)$
with $\M$ a matrix in $S$ 
of rank $r$ and $\y$ in the kernel of $\M$. 
Then $B$ is an irreducible semi-algebraic set of dimension $d_1+d_2$.
\end{lemma}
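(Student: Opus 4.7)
The plan is to exploit the explicit rational kernel parameterization developed in the paragraphs just above.  First, $B$ is plainly semi-algebraic: it is the subset of the semi-algebraic set $S\times\RR^{n_2}$ cut out by the polynomial conditions $\M\y=\0$.

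For the dimension and irreducibility, I would start with the choice of pivot rows and completion $\H$ described above, producing a rational map sending $(\M,\x)\in S\times\RR^{d_2}$ to $(\M')^{-1}\v(\x)\in\RR^{n_2}$.  Let $U\subset S$ be the Zariski-open subset where $\M'$ is non-singular; since $U$ is non-empty (it contains $\M^0$) and $S$ is irreducible, $U$ is irreducible and Zariski-dense in $S$.  On $U$, because every $\M\in S$ has rank exactly $r$, the chosen $r$ rows automatically span the row space of $\M$, and therefore $(\M')^{-1}\v(\x)$ really lies in $\ker(\M)$.  Thus the map $g\colon U\times\RR^{d_2}\to B$ sending $(\M,\x)\mapsto (\M,(\M')^{-1}\v(\x))$ is a well-defined rational injection (injective because for each fixed $\M\in U$ it restricts to a linear isomorphism $\RR^{d_2}\to\ker(\M)$), and its image is precisely $B|_U:=B\cap (U\times\RR^{n_2})$.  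Since $U\times\RR^{d_2}$ is irreducible semi-algebraic of dimension $d_1+d_2$, the same holds for its image $B|_U$ under the injective rational map $g$.

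To upgrade from $B|_U$ to all of $B$, I would repeat the construction for every choice of $r$ pivot rows and $r$ pivot columns; each non-vanishing $r\times r$ minor gives a Zariski-open $U_{I,J}\subset S$ on which an analogous rational trivialization exists, and because every $\M\in S$ has rank exactly $r$ some such minor is non-zero at $\M$, so the $U_{I,J}$ together cover $S$.  By irreducibility of $S$, any two non-empty $U_{I,J}$ and $U_{I',J'}$ intersect in a non-empty Zariski-open set that is Zariski-dense in each, so $B|_{U_{I,J}\cap U_{I',J'}}$ is Zariski-dense in both $B|_{U_{I,J}}$ and $B|_{U_{I',J'}}$.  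Hence all the $B|_{U_{I,J}}$ share a common irreducible real Zariski closure $V$ of dimension $d_1+d_2$.  Then $B=\bigcup_{I,J}B|_{U_{I,J}}\subset V$ and $B$ contains the Zariski-dense subset $B|_U\subset V$, so the real Zariski closure of $B$ is exactly $V$, giving simultaneously irreducibility and the dimension count $d_1+d_2$.

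The main obstacle is this last patching step: a single local rational trivialization only parameterizes $B$ over a Zariski-open piece of the base, and one must rule out the possibility that the fibers over the complementary thin set $S\setminus U$ contribute extra irreducible components to $B$.  Combining several pivot choices with the irreducibility of $S$ to force a common Zariski-dense overlap between any two local trivializations is what makes the argument go through cleanly.
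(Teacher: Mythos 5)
Your proof is correct and follows essentially the same strategy as the paper's: construct rational trivializations of the kernel bundle over Zariski-open pieces of $S$ via an invertible bordered matrix $\M'$, observe that each piece $B|_U$ is therefore irreducible of dimension $d_1+d_2$, and then glue using the irreducibility of $S$. The only structural difference is in the gluing step. The paper iterates: it picks one trivialization, identifies the subvariety $V$ where it fails, restarts the construction from a point of $V$, and appeals to the Noetherian property to terminate after finitely many rounds; it then invokes the appendix fact that two irreducible semi-algebraic sets of equal dimension with a full-dimensional intersection have irreducible union. You instead enumerate all $\binom{n_1}{r}\binom{n_2}{r}$ pivot choices up front (the non-vanishing $r\times r$ minors of matrices of rank exactly $r$ cover $S$), and argue directly that all the local pieces $B|_{U_{I,J}}$ share the same real Zariski closure because their pairwise overlaps are Zariski-dense in each. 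Both arguments are sound; yours avoids the Noetherian induction at the cost of an exhaustive index set, while the paper's is agnostic to how the bordering $\H$ is produced. One small point to make fully explicit in your version: the claim that $B|_{U_{I,J}\cap U_{I',J'}}$ is Zariski-dense in $B|_{U_{I,J}}$ uses that the trivialization map is continuous in the Zariski topology on its domain of definition, so the image of a dense subset of the base (times $\RR^{d_2}$) remains dense in the image; this is true but worth saying.
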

\begin{proof}
We use
the general construction described above (starting with a chosen
$\M^0$.
we can build an injective
rational map from
$(S \times \RR^{d_2})$ to $B$, of the form $(\M,\x)
\mapsto (\M,\y)$.  
The image of each such rational map
is an 
irreducible semi-algebraic set, 
and as an injective rational map has dimension
$d_1+d_2$.

Such a rational map may be undefined over  
some subvariety $V$ of $S$, where the constructed linear system
becomes singular,
and thus its image 
may miss some subvariety of $B$. 
But we can always pick a different rational map, (that uses, perhaps a different
set of rows, and perhaps a different $\H$ matrix)
by starting with another $\M^0$, this time in $V$,
so that the map is undefined over a different subvariety $W$.
Since $S$ is irreducible, $V$, $W$, and their union, must be of
lower dimension than $S$, and thus the region of $S$ where
both maps are defined is full dimensional. 
For any matrix where the map is defined, the image,
as we vary $\M$,  is the entire
fiber above that matrix in $B$. Thus if two maps  are defined
over a full dimensional region of $S$, then their images 
in $B$  must have a full
dimensional intersection.
Thus
the union of these images must itself be irreducible.

The subset of $S$ that is not defined under either map,
$V \cap W$, is a strict algebraic subset of $V$.
Due to the Noetherian property, a finite number of such rational maps
is then guaranteed to have regions of definition that cover $S$, and thus
images that cover $B$, which thus must be irreducible.
\end{proof}

In the next sections, we will use variations on this construction.
We alter the construction when we need specific properties of the
kernel vectors.

\subsection{Stresses of Frameworks in $\IR$ or $\IF_i$}

In Lemma~\ref{lem:RisD},
we wish to understand the structure of 
the  
equilibrium stress bundle over $\IR$, a subset of 
$\RR^{nd} \times \RR^m$ consisting of pairs 
$(\p,\omega)$ where $(G,\p) \in \IR$ and $\omega$ is an equilibrium
stress vector of $(G,\p)$. 
This is a vector bundle over $\IR$.
We will do this by looking at a rational map that maps from a
framework to each of its equilibrium stresses. 

\bs{
Since we want to understand
the complete bundle, we may need to use a finite number of such maps, so 
that at least one map is defined at every framework in $\IR$.}

From Theorem~\ref{thm:glr} the dimension of IR is $nd$.

The rank of the rigidity matrix of any framework $(G,\p)$ in IR is 
$nd-\binom{d+1}{2}$, and so the dimension of equilibrium stresses for
this single framework is $m-nd+\binom{d+1}{2}$. 

Then Lemma~\ref{lem:bundle0} 
applied to the co-kernel of the rigidity
matrices gives us:
\begin{lemma}
\label{lem:bundle1}
Assume $G$ is generically infinitesimally rigid.
The equilibrium stress bundle of $\IR$
is irreducible and has dimension 
$m+\binom{d+1}{2}$.  
\end{lemma}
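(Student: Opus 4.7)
The plan is to apply the construction in Lemma~\ref{lem:bundle0}, suitably generalized so that the base is the irreducible set $\IR$ rather than a set of matrices, with the polynomial map $\p\mapsto R(\p)$ replacing the direct parameterization by matrices. By Lemma~\ref{lem:IR} and Theorem~\ref{thm:glr}, $\IR$ is smooth and irreducible of dimension $nd$, and for every $\p\in\IR$ the rigidity matrix $R(\p)$ has constant rank $nd-\binom{d+1}{2}$, so the equilibrium stresses at $\p$ form a linear space of dimension $m-nd+\binom{d+1}{2}$. Adding these two dimensions gives the predicted total $m+\binom{d+1}{2}$.

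To realize this count and obtain irreducibility, I would follow the construction in the proof of Lemma~\ref{lem:bundle0} essentially verbatim: pick a reference configuration $\p^0\in\IR$, select $nd-\binom{d+1}{2}$ linearly independent columns of $R(\p^0)$, and augment them with a fixed rational matrix $\H$ to get an invertible $m\times m$ matrix. For any $\p$ in the (Zariski-open) locus of $\IR$ where the analogous matrix built from $R(\p)$ remains invertible, solving the augmented linear system produces a rational map $(\p,\x)\mapsto(\p,\omega)$ from $\IR\times\RR^{m-nd+\binom{d+1}{2}}$ into the equilibrium stress bundle, with coefficients polynomial in $\p$. For each fixed $\p$, this map is linear and injective in $\x$ and hits the entire equilibrium-stress fiber over $\p$; its domain is irreducible of dimension $m+\binom{d+1}{2}$, so its image is an irreducible semi-algebraic set of the same dimension.

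Finally, exactly as at the end of the proof of Lemma~\ref{lem:bundle0}, the Noetherian property lets us cover $\IR$ by finitely many such rational maps (obtained by varying the reference $\p^0$), and any two of their images intersect in a full-dimensional subset of the bundle, since each fully covers every common fiber. This forces the union---which is the whole equilibrium stress bundle---to be irreducible of dimension $m+\binom{d+1}{2}$. There is no substantive obstacle beyond verifying that the rational-map construction of Lemma~\ref{lem:bundle0} still goes through when the ``matrix'' is $R(\p)$ for $\p$ varying in an irreducible base, and this is immediate from the polynomial dependence of $R(\p)$ on $\p$.
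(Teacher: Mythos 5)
Your proposal is correct and follows essentially the same route as the paper: the paper cites Lemma~\ref{lem:bundle0} applied to the co-kernels of the rigidity matrices, with the detailed mechanics (choosing a reference $\p^0$, augmenting $nd-\binom{d+1}{2}$ independent columns of $R(\p^0)$ with a constant matrix $\H$, building a rational map defined away from a proper subvariety, and covering $\IR$ with finitely many such maps whose images are pairwise full-dimensionally overlapping) exactly mirroring what you describe. The only cosmetic slip is calling the map's coefficients ``polynomial in $\p$''---the matrix entries are polynomial, but after inversion the map is rational in $\p$, which you correctly name it anyway.
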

\bs{
\begin{proof}
Let us denote by $B$,
the equilibrium stress bundle over $\IR$.

An equilibrium stress vector of $(G,\p)$
is just an element in the co-kernel
of its rigidity matrix.
So, using the general construction above (starting with a chosen
configuration
$\p^0)$, 
we can build an injective
rational map from
$(\IR \times \RR^{m-nd+\binom{d+1}{2}})$ to $B$, of the form $(\p,\x)
\mapsto (\p,\omega)$.  
The image of each such rational map
is irreducible, and as an injective rational map has dimension
$nd+
[m-nd+\binom{d+1}{2}] = m+\binom{d+1}{2}$.  

Such a rational map may be undefined over  
 some subvariety $V$ of $\IR$, where the constructed linear system
becomes singular,
and thus its image 
may miss some subvariety of $B$. 
But we can always pick a different rational map, (that uses, perhaps a different
set of rows, and perhaps a different $\H$ matrix)
by starting with another $\p^0$, this time in $V$,
so that the map is undefined over a different subvariety $W$.
Since $\IR$ is irreducible, $V$, $W$, and their union, must be of
lower dimension than $\IR$, and thus the region of $\IR$ where
both maps are defined is full dimensional. 
For any framework where the map is defined, the image,
as we vary $\x$,  is the entire
fiber above that framework in $B$. Thus if two maps  are defined
over a full dimensional region of $\IR$, then their images 
in $B$  must have a full
dimensional intersection.
Thus
the union of these images must itself be irreducible.

The subset of $\IR$ that is not defined under either map,
$V \cap W$, is a strict algebraic subset of $V$.
Due to the Noetherian property, a finite number of such rational maps
is then guaranteed to have regions of definition that cover $\IR$, and thus
images that cover $B$, which thus must be irreducible. \qed
\end{proof}
}

Similarly,
in Lemma~\ref{lem:CgreaterF},
we wish to understand the structure of 
the  
equilibrium stress bundle over $\IF_i$.

By assumption 
the dimension of $\IF_i$  is $nd-C_i$
and 
the dimension of equilibrium stresses for
any single framework in $\IF_i$ is $m-nd+\binom{d+1}{2}+F_i$.
Again
we conclude

\begin{lemma}
\label{lem:bundle2}
The equilibrium stress bundle of $\IF_i$
is irreducible and has dimension 
$[nd-C_i]+
[m-nd+\binom{d+1}{2}+F_i]$ 
\end{lemma}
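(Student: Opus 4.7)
The plan is to proceed exactly as in the proof of Lemma~\ref{lem:bundle1}, but with $\IF_i$ playing the role of $\IR$. The key inputs that make the previous argument go through are: (a) the base set is irreducible semi-algebraic of known dimension, and (b) the rigidity matrix has constant rank over the base, so its co-kernel has constant dimension and forms a genuine vector bundle to which Lemma~\ref{lem:bundle0} can be applied.

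First, I would recall what the stratification has given us for $\IF_i$. By construction from the rank/singularity/component splitting steps, $\IF_i$ is an irreducible smooth semi-algebraic set of dimension $nd-C_i$, and all rigidity matrices $R(\p)$ with $\p\in\IF_i$ share a common rank. Since the infinitesimal flex space of any $(G,\p)$ in $\IF_i$ has dimension $F'_i=F_i+\binom{d+1}{2}$ by definition of $F_i$, this common rank is $nd - F_i - \binom{d+1}{2}$, and therefore the equilibrium stress space (the co-kernel of $R(\p)$) has constant dimension
\[
m - \bigl(nd - F_i - \tbinom{d+1}{2}\bigr) = m - nd + F_i + \tbinom{d+1}{2}
\]
on all of $\IF_i$.

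Next, I would view the rigidity matrix as giving a polynomially varying family of linear maps parameterized by $\IF_i$, and apply Lemma~\ref{lem:bundle0} with $S=\IF_i$, $d_1 = nd - C_i$, and $d_2 = m - nd + F_i + \binom{d+1}{2}$ (taking the transposed rigidity matrix so that ``kernel'' corresponds to equilibrium stress). Lemma~\ref{lem:bundle0} then yields that the bundle
\[
\{(\p,\omega) : \p\in \IF_i,\ \omega \text{ an equilibrium stress of } (G,\p)\}
\]
is irreducible and of dimension $d_1+d_2=[nd-C_i]+[m-nd+\binom{d+1}{2}+F_i]$, which is exactly the claim.

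The main ``obstacle'' is really just bookkeeping: one has to verify that Lemma~\ref{lem:bundle0} genuinely applies, which requires the constant-rank property on $\IF_i$. But this is precisely what the rank-splitting step in the construction of the $\IF_i$ was designed to guarantee, and irreducibility is guaranteed by the component-splitting step. Once these two properties are in hand, the proof is a verbatim copy of the proof of Lemma~\ref{lem:bundle1} with the dimension of the base changed from $nd$ to $nd-C_i$ and the stress-space dimension shifted by $F_i$.
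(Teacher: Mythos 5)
Your proof is correct and follows essentially the same route as the paper: invoke the properties of $\IF_i$ guaranteed by the stratification (irreducible, smooth, constant rigidity-matrix rank, dimension $nd-C_i$), compute the constant co-kernel dimension from $F_i$, and then apply Lemma~\ref{lem:bundle0} to the transposed rigidity matrices exactly as in Lemma~\ref{lem:bundle1}. The paper's own proof is just a one-line ``same as before,'' so your version is simply a more explicit rendering of the identical argument.
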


\begin{lemma}
\label{lem:nearStress}
Let $(G,\p^0)$, a framework in 
$\IF_i$ (resp. IR), 
have an equilibrium stress  $\omega^0$.
Then any nearby framework in 
$\IF_i$ (resp. IR) must have  an equilibrium stress  close
to $\omega^0$.
\end{lemma}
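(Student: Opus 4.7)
The plan is to exploit the characterization of equilibrium stresses as the cokernel of the rigidity matrix, combined with the fact that the rank of $R(\p)$ is locally constant on $\IF_i$ (resp.~$\IR$). Since $R(\p)$ depends polynomially (hence continuously) on $\p$, a standard linear-algebra argument then gives continuity of the cokernel.

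First I would recall that $\omega$ is an equilibrium stress of $(G,\p)$ if and only if $R(\p)^T \omega = 0$, so the space of equilibrium stresses of $(G,\p)$ is $\ker R(\p)^T$. By construction of the stratification, the rank of $R(\p)$ is constant on $\IF_i$; on $\IR$ every framework is infinitesimally rigid, so $R(\p)$ has the maximal rank $nd - \binom{d+1}{2}$ there (Theorem~\ref{thm:glr}). In either case, call the common rank $r$, so $\dim \ker R(\p)^T = m - r$ is constant throughout the stratum.

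Next I would fix a linear complement $C \subset \RR^m$ to $\ker R(\p^0)^T$ and let $\pi : \RR^m \to \ker R(\p^0)^T$ denote the projection along $C$. Since $\pi$ restricts to the identity on $\ker R(\p^0)^T$, its restriction to $\ker R(\p)^T$ is a linear isomorphism at $\p = \p^0$; by the constant dimension of both sides and continuity of $R(\p)$, non-degeneracy of this restriction is an open condition on the stratum and therefore persists on a small neighborhood of $\p^0$ in $\IF_i$ (resp.~$\IR$). For $\p$ in such a neighborhood define
\[
\omega(\p) \;:=\; \bigl(\pi|_{\ker R(\p)^T}\bigr)^{-1}(\omega^0).
\]
By construction $\omega(\p) \in \ker R(\p)^T$, so it is an equilibrium stress for $(G,\p)$, and $\omega(\p^0) = \omega^0$. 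Continuous dependence of the matrix entries of $\pi|_{\ker R(\p)^T}$ on $\p$, together with continuity of matrix inversion at an invertible matrix, yield $\omega(\p) \to \omega^0$ as $\p \to \p^0$, which is the claim.

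There is no real obstacle here: the only point requiring care is ensuring that the rank of $R(\p)$ does not drop in the relevant neighborhood, and this is exactly why the stratification was built to have constant rank on each $\IF_i$. The same constant-rank fact holds trivially for $\IR$, so both cases of the lemma follow at once.
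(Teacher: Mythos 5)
Your argument is correct and captures the same essential mechanism as the paper's proof: equilibrium stresses are the cokernel of the rigidity matrix, the stratification guarantees constant rank on $\IF_i$ (and it is automatic on $\IR$), and therefore one can construct a continuous local section of the cokernel bundle pinned to agree with $\omega^0$ at $\p^0$. The only cosmetic difference is that the paper builds the section explicitly as a rational map by inverting a fixed square matrix (the ``general construction'' in its appendix), whereas you build it by inverting the projection onto $\ker R(\p^0)^T$ along a fixed complement $C$; your phrase ``matrix entries of $\pi|_{\ker R(\p)^T}$'' implicitly needs a continuously varying basis of $\ker R(\p)^T$, which is exactly what the paper's explicit construction supplies, but this is a presentational wrinkle, not a gap.
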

\begin{proof}
Using the general construction above, starting with
$\p^0$, 
we can build a
rational map from
$(\IF_i \times \RR^{m-nd+\binom{d+1}{2}+F_i})$ to $\RR^m$, of the form 
$(\p,\x)
\mapsto \omega$, 
where $\omega$ is an equilibrium stress of $\p$, and such
that the map is well defined in a neighborhood of $\p^0$.
For an appropriate $\x^0$, we have 
$(\p^0,\x^0)
\mapsto \omega^0$.
Since this map is continuous, for a nearby 
$\q$ we must have  
$(\q,\x^0)
\mapsto \omega$, where $\omega$ is close to $\omega^0$.
\end{proof}

\subsection{Infinitesimal Flexes of Frameworks in $\IF_i$}

In Lemma~\ref{lem:ClessX}, we will want a rational map that maps
from a framework in $\IF_i$ to a non-trivial
infinitesimal flex of that framework.
At a fixed framework,
by varying the $\x$ parameters, 
we wish the image to be  an $F_i$-dimensional
space of such flexes.

\begin{lemma}
\label{lem:flexmap},
We can define a rational map ($f$ stands for flex), $f(\p,\x): \IF_i
\times \RR^{F_i} \to \RR^{nd}$ that (over its domain/where there is no
division by zero) maps to a non-trivial infinitesimal flex $\p' \in
\RR^{nd}$ of $\p$, and for a fixed $\p$, the map is a linear injective
map over $\x$.
\end{lemma}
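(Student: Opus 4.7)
The plan is to apply the rational kernel-finding recipe from the appendix (as used in Lemma \ref{lem:bundle0}) to an augmented rigidity matrix whose kernel captures only non-trivial infinitesimal flexes.

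First I fix once and for all a ``pinning'' matrix $P$ of size $\binom{d+1}{2}\times nd$ with entries in $\QQ$: for example, require that the flex of vertex $1$ vanish, that the flex of vertex $2$ lie on a prescribed coordinate axis, that the flex of vertex $3$ lie on a prescribed $2$-plane, and so on.  For any $\p\in\RR^{nd}$ whose first $d+1$ vertices are in general affine position, the $\binom{d+1}{2}$-dimensional space $T(\p)$ of trivial infinitesimal flexes meets $\ker P$ only at the origin.  Form the augmented matrix
\[
    \tilde R(\p) \;:=\; \begin{pmatrix} R(\p) \\ P \end{pmatrix}
\]
of size $(m+\binom{d+1}{2})\times nd$.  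The rigidity matrix $R(\p)$ has the constant rank $nd-F'_i$ throughout $\IF_i$ (built into the stratification), and since every $\p\in\IF_i$ has full $d$-dimensional affine span, the transversality $\ker P\cap T(\p)=\{0\}$ holds on a Zariski-open dense subset of $\IF_i$.  Where it holds, a standard dimension count gives $\ker\tilde R(\p)=\ker R(\p)\cap\ker P$ of dimension $F'_i-\binom{d+1}{2}=F_i$, and any non-zero vector there is automatically a non-trivial flex (it lies in $\ker P$, which meets $T(\p)$ trivially).

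Second, I apply the appendix recipe to $\tilde R$.  Pick a generic base $\p^0\in\IF_i$, choose $nd-F_i$ rows of $\tilde R(\p^0)$ that are linearly independent, and complete them to an invertible $nd\times nd$ matrix by appending $F_i$ further rows $\H$ with entries in $\QQ$.  For $\p$ in a neighborhood of $\p^0$, let $\tilde R'(\p)$ denote the square matrix built by taking the same rows of $\tilde R(\p)$ together with $\H$.  Writing $\v(\x)\in\RR^{nd}$ for the vector whose final $F_i$ entries are $\x\in\RR^{F_i}$ and whose first $nd-F_i$ entries are zero, set
\[
    f(\p,\x) \;:=\; \tilde R'(\p)^{-1}\v(\x).
\]
Wherever $\tilde R'(\p)$ is invertible, $f$ is rational in $\p$, linear and injective in $\x$ (as $\tilde R'(\p)^{-1}$ has full rank), and $f(\p,\x)\in \ker\tilde R(\p)$.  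Hence $f(\p,\x)$ is a non-trivial infinitesimal flex of $(G,\p)$ whenever $\x\ne 0$.

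The main obstacle is verifying the transversality $\ker P\cap T(\p)=\{0\}$ on a dense open subset of $\IF_i$, since it is precisely what forces $\tilde R(\p)$ to have the claimed rank $nd-F_i$ and its kernel to avoid $T(\p)$.  But this is an open algebraic condition, reducing to the non-vanishing of certain determinants in the coordinates of the $d+1$ pinned vertices.  Because $\IF_i$ is irreducible and contains configurations in which those $d+1$ vertices are in general position (a Zariski-open subset of the full-affine-span locus), the condition holds on a dense Zariski-open subset of $\IF_i$, which is exactly the domain on which the rational map $f$ needs to be well defined.
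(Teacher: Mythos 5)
Your overall strategy matches the paper's---augment the rigidity matrix with a constant ``pinning'' block to eliminate the trivial flexes, then solve the rational square linear system from the appendix---and the dimension count and non-triviality argument are correct as far as they go. (You put the free parameters $\x$ on constant rows $\H$ and deduce non-triviality from membership in $\ker P$; the paper puts them on $\p$-dependent ``fake edge'' rows and deduces non-triviality from those fake edge lengths changing at first order. Either variant works.)

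There is, however, a genuine gap in the claimed transversality $\ker P\cap T(\p^0)=\{0\}$. You fix $P$ to pin vertices $1,\ldots,d+1$ \emph{before} looking at $\IF_i$, and then assert that $\IF_i$ contains configurations in which those $d+1$ vertices are in general affine position. That assertion need not hold. Every $\p\in\IF_i$ has full $d$-dimensional affine span, so \emph{some} $d+1$ vertices are in general affine position, but the \emph{fixed} labeled set $\{1,\ldots,d+1\}$ may be degenerate throughout $\IF_i$. For instance, let $G$ be the path on three vertices with edges $\{1,2\}$ and $\{2,3\}$, and take $d=1$: then $\{\p : \p_1=\p_2,\ \p_3\neq\p_1\}$ is a smooth, irreducible, affine-invariant stratum of $\IF$ of constant rigidity matrix rank, and vertices $1$ and $2$ coincide throughout it, so the ``pin vertices $1$ and $2$'' matrix $P$ is singular on all of that $\IF_i$. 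The fix is to choose $P$ \emph{after} fixing a generic $\p^0\in\IF_i$: pick $d+1$ vertices of $\p^0$ that are in general affine position (these exist because $\p^0$ has full affine span), pin those, and then your irreducibility argument for Zariski-open density goes through. This adaptation to the generic $\p^0$ is exactly what the paper does when it constructs $\J$ as a complement of the row space of $R(G',\p^0)$.
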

\begin{proof} 
Again, we will use our general construction above to define such a map.
But we need to make special care to make sure that the image of the map
does not contain any trivial flexes. This requires a bit of care in 
defining the extra rows to complete  our square matrix, as well as 
how we construct the 
$\v(\x)$ vector.

We proceed as follows:
Pick $\p^0$,
a 
generic configuration of $\IF_i$.
Pick a subset of
$nd-\binom{d+1}{2}-F_i$ edges that are independent in $(G,\p^0)$. 
Instead of completing this matrix with a constant $\H$ matrix, we do
the following:
Add
a set of $F_i$ ``fake edges'' to this subset to create a graph $G'$ so
that $(G',\p^0)$ is infinitesimally rigid.  (This can be done, one by
one, as $\p^0$ has a full $d$-dimensional affine span).  Finally create $\J$, an
$\binom{d+1}{2}$-by-$nd$ matrix of constants
so that the rows of $\J$  form a
linear complement to the rows of the rigidity matrix of $(G',\p^0)$.

For any $\p$ we define its $nd$-by-$nd$ modified,
non-singular, rigidity matrix $\M'(\p)$, as the rigidity matrix of
$(G',\p)$ with the added 
rows of the $\J$ matrix above appended to it.  We
invert this to obtain $(\M'(\p))^{-1}$.

Let $\v(\x)$ be the $nd$-vector with $nd-\binom{d+1}{2}-F_i$ leading
zeros, followed by the $F_i$ coordinates of $\x$, followed by
$\binom{d+1}{2}$ more zeros.  
Then $f(\p,\x) := (\M'(\p))^{-1}\v(\x)$ gives
us our desired map. Any non-zero coordinates in $\x$ will ensure
that some fake edge changes its length at first order, thus making
our obtained flex, non-trivial.
\end{proof}

\subsection{A framework in the Kernel of a Stress Matrix}
\label{sec:kerstress}
In section~\ref{sec:stress} we want a rational map that maps from
a stress matrix $\Omega$ to a framework in it kernel
with a $d$-dimensional affine span.
We can represent such a framework as a vector in $\RR^{nd}$.
We can represent the kernel condition using the 
$nd$-by-$nd$ matrix $\I_d \bigotimes \Omega$. 

Again, we can then apply our general construction above. 
In order to obtain a framework with a full $d$-dimensional affine span,
we set the extra $\H$ rows to represent the pinning of 
$d+1$ specific vertices. We fix the non-zero elements of the right hand 
side, $\v$, to place these pinned vertex in general affine position.
In this setting, we only want one framework per stress, so there are no
free variables $\x$.

The map will be undefined 
over some subvariety $V$ of $\ST$, 
(which includes, for example
all of the $\Omega$
of rank strictly less than $n-d-1$, and all of the equilibrium 
stress matrices of frameworks
where the chosen $d+1$ vertices lie in a single hyperplane).  

\newpage

%%%%%%%%%%%%%%%%
\def\v{\oldv}

%%%%%%%%%%%%%%%%

\end{document}